\newtheorem{theorem}{Theorem}
\newtheorem{lemma}{Lemma}[section]
\newtheorem{proposition}{Proposition}[section]
\newtheorem{claim}{Claim}
\theoremstyle{remark}
\numberwithin{equation}{section}
\newcommand{\R}{\mathbb{R}}
\newcommand{\N}{\mathbb{N}}
\numberwithin{equation}{section}
\def\bell{{\boldsymbol{\ell}}}
\def\psl#1#2{\left(#1,#2 \right)_{L^2}}
\def\pse#1#2{\left(#1,#2 \right)_E}
\def\psh#1#2{\left(#1,#2 \right)_{\dot H^1}}
\def\pshb#1#2{\left(#1,#2 \right)_{\dot H^1_\ell}}
\def\pshbb#1#2{\left(#1,#2 \right)_{\dot H^1_\bell}}
\def\pshbbk#1#2{\left(#1,#2 \right)_{\dot H^1_{\bell_k}}}
\def\pshbbun#1#2{\left(#1,#2 \right)_{\dot H^1_{\bell_1}}}
\def\nol#1{\left\|#1 \right\|_{L^2}}
\def\noe#1{\left\|#1 \right\|_E}
\def\noh#1{\left\|#1 \right\|_{\dot H^1}}
\def\nohb#1{\left\|#1 \right\|_{\dot H^1_\ell}}
\def\nohbb#1{\left\|#1 \right\|_{\dot H^1_\bell}}
\def\fl#1{\vec{#1}}
\def\e{\varepsilon}
\def\Nint{\mathcal N_{\Omega}}
\def\Nsol{\mathcal N_{\Omega^C}}
\def\SWk{\sum_k W_k}
\def\sumk{\mathcal W_K}
\def\osumk{\widetilde {\mathcal W}_K}
\def\lg{\langle}
\def\rg{\rangle}
\begin{document}
\title[Multi-solitons for critical wave equation]{Construction of multi-solitons for the energy-critical   wave equation in  dimension 5}
\author[Y. Martel]{Yvan Martel}
\address{Ecole polytechnique, CMLS CNRS UMR7640, 91128 Palaiseau, France}
\email{yvan.martel@polytechnique.edu}
\author[F. Merle]{Frank Merle}
\address{Universit\'e de Cergy Pontoise and Institut des Hautes \'Etudes Scientifiques, AGM CNRS UMR8088, 95302 Cergy-Pontoise, France}
\email{merle@math.u-cergy.fr}
\begin{abstract}
We construct  $2$-solitons of the focusing energy-critical nonlinear wave equation in space dimension $5$,
i.e. solutions $u$ of the equation  such that
$$
u(t) - \left[ W_1(t) + W_2(t)\right] \to 0 \quad \hbox{as $t\to +\infty$}
$$
 in the energy space,
where $W_1$ and $W_2$ are   Lorentz transforms of the explicit standing soliton   $W(x) = ( 1+  {|x|^2}/{15} )^{-3/2}$, with any speeds
$\bell_1\neq \bell_2$ ($|\bell_k|<1$).
The existence result also holds for  the case of $K$-solitons, for any $K\geq 3$, assuming that the speeds $\bell_k$ are collinear.

The main difficulty of the construction is the strong interaction between the solitons due to the slow algebraic decay of $W(x)$ as $|x|\to +\infty$. This is in contrast with previous constructions of multi-solitons for other nonlinear dispersive equations (like generalized KdV and nonlinear Schr\"odinger equations in energy subcritical cases), where the interactions are exponentially small in time due to the exponential decay of the solitons.
\end{abstract}
\maketitle
\section{Introduction}
\subsection{Statement of the main result}
We consider the focusing energy-critical nonlinear wave equation in  dimension $5$
\begin{equation}\label{wave}
\left\{ \begin{aligned}
&\partial_t^2 u - \Delta u - |u|^{\frac 4{3}} u = 0, \quad (t,x)\in [0,\infty)\times \R^5,\\
& u_{|t=0} = u_0\in \dot H^1,\quad 
\partial_t u_{|t=0} = u_1\in L^2.
\end{aligned}\right.
\end{equation}
Recall that the Cauchy problem for equation \eqref{wave} is locally well-posed in the energy space $\dot H^1\times L^2$, using suitable Strichartz estimates. 
See   e.g. \cite{Pecher84,GiSoVe92,LiSo95,ShSt94,ShSt98,Sogge95,Kapitanski94, KM}.
Note that equation \eqref{wave} is invariant by the $\dot H^1$ scaling:
if $u(t,x)$ is solution of \eqref{wave}, then
$$
u_\lambda(t,x)=\frac{1}{\lambda^{3/2}}u\left(\frac{t}{\lambda},\frac{x}{\lambda}\right)
$$
is also solution of \eqref{wave} and $\|u_\lambda\|_{\dot H^ 1}=\|u\|_{\dot H^ 1}$.
For  $\dot H^1\times L^2$ solution, the energy 
$E(u(t),\partial_t u(t))$ and momentum $M(u(t),\partial_t u(t))$ are conserved, where
$$
E(u,v) = \frac 12 \int v^2 + \frac 12 \int |\nabla u|^2 - \frac {3}{10} \int |u|^{\frac {10}{3}},
\quad
M(u,v)=\int v\nabla u.
$$

Recall that  the function $W$ defined by
\begin{equation}\label{defW}
W(x) = \left( 1+ \frac {|x|^2}{15}\right)^{-\frac{3}2},\quad 
\Delta W + W^{\frac 73}=0 , \quad x\in \R^5,
\end{equation}
is a stationary solution, called \textit{soliton}, of \eqref{wave}.
Using the Lorentz transformation on $W$, we obtain traveling solitons:
for $\boldsymbol{\ell}\in \R^5$, with $|\boldsymbol{\ell}|< 1$, let
\begin{equation}\label{defWbb}
W_{\bell}(x)=W\left(\left(\frac{1}{\sqrt{1-|\boldsymbol{\ell}|^2}}-1\right) \frac{\boldsymbol{\ell}(\boldsymbol{\ell}\cdot x)}{|\boldsymbol\ell|^2} +x\right);
\end{equation}
then $u(t,x)=\pm W_{\boldsymbol{\ell}}(x-{\boldsymbol{\ell}} t)$ is   solution of \eqref{wave}.

\medskip

Recall that an important conjecture in the field says that any global solution of \eqref{wave} decomposes as $t\to +\infty$ as a finite sum of (rescaled and translated) solitons 
plus a radiation (solution of the linear wave equation).
Such a classification was achieved in the radial case in \cite{DKM1} (in space dimension 3)  but is still widely open in the nonradial case (see  \cite{DKM2} and references therein).

In this paper, we address the question of the construction of non trivial  asymptotic behaviors  in the nonradial case. 
In this context, multi-solitons are canonical objects behaving as $t\to \infty$ exactly as the sum of several solitons in the energy space.
The main result of this paper is the existence of   $2$-solitons for \eqref{wave} and of $K$-solitons for $K\geq 3$ for collinear speeds.

\begin{theorem}[Existence of multi-solitons]\label{th:1}
Let $K\geq 2$. For  $k\in\{1,\ldots,K\}$, let $\lambda^\infty_k>0$, ${\mathbf y}^\infty_k\in\R^5$, $\iota_k=\pm 1$ and  $\bell_k \in \R^5$ with $|\bell_k|<1$, $\bell_k\neq \bell_{k'}$ for $k'\neq k$. \\Assume that one of the following assumptions holds
\begin{itemize}
\item[{\rm (A)}]{\rm Two-solitons ($K=2$).}
\item[{\rm (B)}]{\rm Collinear speeds.}
For all $k\in\{1,\ldots,K\}$,  $\bell_k=\ell_k \mathbf{e}_1$ where $\ell_k\in(-1,1)$.
\end{itemize}
Then, there exist  $T_0>0$ and a solution $u$ of \eqref{wave}   on $[T_0,+\infty)$ in the energy space such that
\begin{align}\label{eq:th1}
 &\lim_{t\to +\infty} \left\|u(t) 
-\sum_{k=1}^K \frac{\iota_k}{(\lambda_k^\infty)^{3/2}} W_{\boldsymbol{\ell}_k}\left(\frac {.-{\boldsymbol{\ell}_k} t-\mathbf{y}_k^\infty} {\lambda_k^\infty}  \right)
\right\|_{\dot H^1}  =0,
\\
\label{eq:th1bis}
&  \lim_{t\to +\infty}  \left\|\partial_t u(t) 
+\sum_{k=1}^K \frac{\iota_k}{(\lambda_k^\infty)^{5/2}}\, ({\bell_k}\cdot  \nabla W_{\boldsymbol{\ell}_k})\left(\frac {.-{\boldsymbol{\ell}_k} t-\mathbf{y}_k^\infty} {\lambda_k^\infty}  \right)
\right\|_{L^2} =0.
\end{align}
\end{theorem}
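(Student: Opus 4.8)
The plan is a compactness argument. I fix an increasing sequence $T_n\to+\infty$ and, for each $n$, look for a solution $u_n$ of \eqref{wave} on $[T_0,T_n]$ whose Cauchy data at $t=T_n$ equals the sum of the $K$ prescribed solitons, corrected by a small term in a $K$-dimensional subspace associated with the unstable modes of the linearized flow at the solitons (this correction will be tuned at the end). The heart of the matter is the uniform-in-$n$ bound
\[
\|\vec u_n(t)-\vec{\mathcal R}(t)\|_{\dot H^1\times L^2}\lesssim t^{-2}\qquad\text{on }[T_0,T_n],
\]
where $\mathcal R(t)$ denotes the sum of the solitons with modulated scaling and translation parameters. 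Granting it, a weak limit $u$ of $\vec u_n$ is, by the local Cauchy theory and weak continuity, a solution of \eqref{wave} on $[T_0,+\infty)$; it inherits the bound, the modulation parameters converge, and \eqref{eq:th1}--\eqref{eq:th1bis} follow.

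First I record the geometry. Write $\mathcal R=\sum_{k=1}^K W_k$ with $W_k(t,x)=\iota_k\lambda_k(t)^{-3/2}W_{\bell_k}\big((x-\bell_k t-\mathbf{y}_k(t))/\lambda_k(t)\big)$, the $\lambda_k,\mathbf{y}_k$ being modulation parameters. Since $|\bell_k-\bell_{k'}|$ is a fixed positive number, distinct soliton centers stay at mutual distance $\gtrsim t$, and the algebraic decay $W(y)\sim|y|^{-3}$ gives $|W_k(t,x)|\lesssim\langle t\rangle^{-3}$ wherever $W_{k'}$ ($k'\neq k$) is not small. Hence the source term $\Psi:=|\mathcal R|^{4/3}\mathcal R-\sum_k|W_k|^{4/3}W_k$ in the equation for $\epsilon:=u_n-\mathcal R$ satisfies $\|\Psi(t)\|_{L^2}\lesssim\langle t\rangle^{-3}$: near each center $\Psi\approx\tfrac73|W_k|^{4/3}\sum_{k'\neq k}W_{k'}$, which is of size $\langle t\rangle^{-3}$ in $L^2$, while between consecutive solitons and in the far exterior it is $\lesssim\langle t\rangle^{-9/2}$. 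In case (B) I order $\ell_1<\dots<\ell_K$ and attach to $W_k$ a cutoff $\chi_k(t,x)=\chi\big((x_1-\ell_k t)/(\gamma t)\big)$ supported in a cone of small fixed aperture $\gamma$, so that all interactions between distinct solitons take place in regions where the relevant solitons have already decayed to $O((\gamma t)^{-3})$; in case (A) the elementary two-body geometry plays the same role. This is the only point where collinearity (or $K=2$) is used.

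The core is modulation together with a localized energy functional. Choose $\lambda_k,\mathbf{y}_k$ by the implicit function theorem so that $\epsilon(t)\perp\mathrm{span}\{\Lambda W_k,\ \partial_{x_j}W_k:1\le j\le5\}$, the kernel of $L_k:=-\Delta-\tfrac73|W_k|^{4/3}$; differentiating the orthogonality relations gives $|\dot\lambda_k|+|\dot{\mathbf{y}}_k|\lesssim\|\vec\epsilon(t)\|^2+\langle t\rangle^{-3}$. Then set
\[
\mathcal F(t)=E\big(\vec{\mathcal R}(t)+\vec\epsilon(t)\big)-E\big(\vec{\mathcal R}(t)\big)-\big\langle E'(\vec{\mathcal R}(t)),\vec\epsilon(t)\big\rangle+\sum_k\bell_k\cdot\int\partial_t\epsilon\,\nabla(\chi_k\epsilon)\,dx,
\]
i.e. the quadratic-and-higher remainder of the energy expanded around $\mathcal R$, corrected by the Lorentz/momentum terms made necessary by the motion of $\mathcal R$. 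Two properties must be proved. Coercivity: using Hardy's inequality, the localization of $|\mathcal R|^{4/3}$ near the centers, the coercivity of each $L_k$ on the orthogonal complement of $\mathrm{span}\{\Lambda W_k,\partial_{x_j}W_k,\phi_k\}$ (where $L_k\phi_k=-\nu_k^2\phi_k$ spans the unique negative direction), and $|\bell_k|<1$, one gets $\mathcal F(t)+\sum_k a_k(t)^2\gtrsim\|\vec\epsilon(t)\|^2$ with $a_k:=(\epsilon,\phi_k)$. Almost-monotonicity: since each $\chi_k$ is transported at speed $\bell_k$,
\[
\Big|\tfrac{d}{dt}\mathcal F(t)\Big|\lesssim\langle t\rangle^{-1}\|\vec\epsilon(t)\|^2+\|\vec\epsilon(t)\|\,\|\Psi(t)\|_{L^2}+\|\vec\epsilon(t)\|^3 ,
\]
the cubic term being treated by Sobolev embedding. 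Finally the negative directions obey an ODE: the wave-frame components $b_k^\pm$ (linear combinations of $a_k$ and $\dot a_k$) satisfy $|\tfrac{d}{dt}b_k^\pm\mp\nu_k b_k^\pm|\lesssim\|\vec\epsilon(t)\|^2+\langle t\rangle^{-3}$, and the $K$-dimensional correction in the data at $T_n$ is tuned, by a topological (no-retraction) argument, so that $b_k^\pm$ is the bounded solution of this system; this yields $|a_k(t)|\lesssim\sup_{[t,T_n]}\|\vec\epsilon\|^2+\langle t\rangle^{-3}$.

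The bootstrap then closes. Assuming $\|\vec\epsilon(s)\|\le Cs^{-2}$ on $[t,T_n]$, and combining coercivity, $\|\Psi(s)\|_{L^2}\lesssim\langle s\rangle^{-3}$, the modulation and mode bounds, $\mathcal F(T_n)=O(T_n^{-6})$, and the integral of the almost-monotonicity estimate, one obtains
\[
\|\vec\epsilon(t)\|^2\lesssim\int_t^{T_n}\!\big(s^{-1}\|\vec\epsilon(s)\|^2+s^{-3}\|\vec\epsilon(s)\|\big)\,ds+\langle t\rangle^{-6},
\]
and, using that the cutoff contribution to the $s^{-1}\|\vec\epsilon\|^2$ term carries a small or favorably signed coefficient, this recovers $\|\vec\epsilon(t)\|\le\tfrac12 Ct^{-2}$ for $C$ large, uniformly in $n$; the rate $t^{-2}$ is the balance point forced by the $O(t^{-3})$ interaction. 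The main obstacle is precisely this absence of margin, which is the whole point of the paper: because $W$ decays only like $|x|^{-3}$, the interactions are merely polynomially (not exponentially) small in time, so the interaction exponent, the $\langle t\rangle^{-1}$ losses from the moving cutoffs, the orthogonality errors in the coercivity, and the unstable-mode errors all sit at comparable orders and must be reconciled at once; in particular one must check that nowhere --- especially between two consecutive solitons and in the far exterior, where every $W_k\sim|x|^{-3}$ --- does the effective forcing degrade to $\langle t\rangle^{-3}\log t$ or worse. For non-collinear speeds and $K\ge3$ this would mean controlling a genuinely multi-directional web of such interactions simultaneously, which is why the statement is restricted to (A) or (B).
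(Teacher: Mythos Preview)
Your overall architecture matches the paper's: backwards-in-time approximate solutions with unstable-mode corrections, modulation, a localized energy--momentum functional, bootstrap at rate $t^{-2}$, and a topological (no-retraction) argument for the unstable directions. Two steps, however, are not right as written.

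First, the modulation bound $|\dot\lambda_k|+|\dot{\mathbf y}_k|\lesssim\|\fl\e\|^2+\langle t\rangle^{-3}$ is false: differentiating $(\e,\Lambda W_k)_{\dot H^1_{\bell_k}}=0$ produces a term $(\eta,\Lambda W_k)$ that is genuinely \emph{linear} in $\|\fl\e\|$, and one only obtains $|\dot\lambda_k|+|\dot{\mathbf y}_k|\lesssim\|\fl\e\|$ (this is \eqref{le:p}). With a merely linear modulation bound, the modulation contribution to $\frac{d}{dt}\mathcal F$ is a priori $O(\|\fl\e\|^2)$ with no $t^{-1}$ gain, which would kill the bootstrap. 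The actual mechanism is algebraic: the functional is built so that the modulation terms pair $\e$ against $L_{\ell_k}(\Lambda W_{\ell_k})$ and $L_{\ell_k}(\partial_{x_j}W_{\ell_k})$, which vanish identically; the residual is then $O(t^{-33/10}\|\fl\e\|)$ after exploiting $|(\chi_K-\ell_k)\partial_{x_1}M_k|\lesssim t^{-13/10}|W_k|^{9/10}$. Your quadratic claim sidesteps, rather than confronts, this cancellation.

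Second, your handling of the critical $t^{-1}\|\fl\e\|^2$ term and of case (A) is where the real content lies, and both are off. The paper does \emph{not} argue that the $t^{-1}$ coefficient is ``small or favorably signed'': it builds a single scalar function $\chi_K(t,x_1)$ equal to $\ell_k$ near soliton $k$ and linear in $x_1/t$ in between (this is exactly where collinearity is used), so that the $t^{-1}$ term in $\frac{d}{dt}\mathcal H_K$ is precisely a localized energy $\Nint$ on the transition region; it then proves the \emph{sharpened} coercivity $\mathcal H_K\ge\Nint+\mu\Nsol-\ldots$ (not merely $\mathcal H_K\gtrsim\|\fl\e\|^2$), which closes to $-\frac{d}{dt}(t^2\mathcal H_K)\lesssim C^*t^{-3}$. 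A collection of per-soliton cutoffs $\chi_k$ as you propose does not produce this structure. As for case (A), the paper does not run the energy argument for non-collinear $\bell_1,\bell_2$ at all: it proves (B) first, then reduces the general two-soliton to a collinear one by a Lorentz boost in the common transverse direction, and checks via finite speed of propagation and small-data scattering under Lorentz transform that the boosted solution is still a multi-soliton in the energy space. Your sentence ``in case (A) the elementary two-body geometry plays the same role'' skips this reduction entirely.
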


The question of existence and properties of multi-solitons for nonlinear models has a long history starting with the celebrated works of Fermi, Pasta and Ulam \cite{FPU} and Kruskal and Zabusky \cite{KZ}, and closely related to the study of  integrable equations by the inverse scattering transform. We refer in particular to the review work of Miura \cite{Mi} on multi-solitons for the  Korteweg-de Vries  equation and to Zakharov and Shabat \cite{ZS} for multi-solitons of the 1D cubic Schr\"odinger equation. Recall that in integrable cases, these solutions are very special: they are explicit and  behave exactly as the sum of several solitons both at $t\to +\infty$ and $t\to -\infty$. In particular, they describe the collision and interaction of several solitons globally in time, i.e. for all $t\in (-\infty,+\infty)$.

Apart from works on integrable models, there have been several proofs of existence of multi-solitons for nonlinear dispersive equations, starting with \cite{Me} for the $L^2$ critical nonlinear Schr\"odinger equation and \cite{Ma} for the subcritical and critical generalized Korteweg-de Vries equations. Note that \cite{Ma} also contains a   uniqueness result in the energy space, whose proof is specific to KdV type  equations. Concerning existence, the general strategy of these works is to build backwards in time a sequence of approximate solutions satisfying uniform estimates and then to use a compactness argument. In \cite{Ma} and also in \cite{MMnls}, concerning  the subcritical nonlinear Schr\"odinger equation,      uniform estimates are deduced from long time stability arguments, adapted from  the previous works \cite{We} (for single solitons) and \cite{MMT} (for several decoupled solitons). 
Later, the strategy of these works was extended to the case of exponentially unstable solitons, see \cite{CMM} for the construction of  multi-solitons and \cite{Co} for the classification of all multi-solitons of the supercritical generalized KdV equation. In these   papers, the exponential instability is controled through a simple topological argument.

For the Klein-Gordon equation, the strategy was adapted by Cote and Munoz \cite{CMkg} (for real and unstable solitons) and Bellazzini, Ghimenti and Le Coz \cite{BGLC} (for complex, stable solitons). For the water-waves system, see the recent work of Ming, Rousset and Tzvetkov \cite{MRT}.

Note that all the works mentioned before are for exponentially decaying solitons, and thus  exponentially small interactions  as $t\to +\infty$.
The main difficulty of constructing multi-solitons for \eqref{wave} is due to the algebraic decay of $W$, which implies that the solitons have strong interactions, of order $t^{-3}$.
For the Benjamin-Ono equation, multi-solitons exist with solitons behaving algebraically at $\infty$, but they are obtained explicitly using the integrability of the equation (see e.g. \cite{Mat} and \cite{NL}). Stability and asymptotic stability of such multi-solitons is proved in \cite{KMa}, but relying on specific monotonicity formulas  for KdV type equations.
In \cite{KMR}, devoted to the construction of multi-solitons for the Hartree equation, solitons are also decaying algebraically. 
However, in that case, the potential related to the soliton is exponentially decaying, which allows a decoupling facilitating the construction of an approximate solution at order $t^{-M}$ for arbitrarily large $M$. For  $M>M_0$ large enough, an actual solution can then be constructed close to this approximate solution. Such  decoupling is not present in the case of the energy critical wave equation \eqref{wave} and it seems delicate to construct sharp approximate multi-solitons (i.e. at order $t^{-M}$ for large $M$).

\subsection{Comments on Theorem \ref{th:1}.}
(1) Each soliton being exponentially unstable, it can be derived as a consequence of the proof 
that the multi-solitons constructed in Theorem \ref{th:1} are unstable.
Uniqueness of multi-soliton  in the energy space, up to the unstable directions, is an open problem as for the nonlinear Schr\"odinger equation. 
The uniqueness statements in \cite{Ma} and \cite{Co} are specific to KdV-type equations.

The global behavior of $u(t)$ i.e. for $t<T_0$ is an open problem. 
We conjecture that it does not have the multi-soliton behavior as $t\to -\infty$.
We refer to  \cite{MMimrn} for the proof of nonexistence of pure multi-solitons in the case of the (non integrable) quartic generalized Korteweg de Vries equation for a certain range  of speeds.

\smallskip

(2) Dimension $N\geq 6$. We expect that Theorem \ref{th:1}  still holds true  for the energy-critical wave equation for space dimensions $N\geq 6$. Indeed, at the formal level, all the important computations of this paper can be reproduced for $N\geq 6$.
However, the lack of regularity of the nonlinearity create several additional technical difficulties, which we choose not to treat in this paper.
Recall that such difficulties  were overcome for the  Cauchy problem in the energy space in \cite{BCLPZ}.

\smallskip

(3) Dimension 3 and 4. We conjecture that in this case, there exists no multi-soliton in the sense \eqref{eq:th1}--\eqref{eq:th1bis}, for any value of $K\geq 2$.
Heuristically, from the asymptotics as $|x|\to\infty$,  $W(x)\sim  |x|^{2-N}$ in dimension $N$, the interaction between two solitons of different speeds is $t^{2-N}$, i.e. $t^{-1}$ 
in dimension 3, and $t^{-2}$ in dimension 4.
Following our method, these interactions are too strong and create diverging terms in the construction.
However, to prove nonexistence of multi-soliton rigorously, one would need  \emph{a priori} information on
 any multi-soliton, which is an open problem for any dimension $N\geq 3$.

\subsection{Strategy of the proof}
First, we note that  Theorem \ref{th:1} in case (A) follows from case (B) with $K=2$ and the Lorentz transformation. 
See Section 5 for a detailed proof, inspired by arguments in \cite{KM,DKM2}. 

\medskip

The proof of Theorem \ref{th:1} in case (B) follows the  strategy by uniform estimates and compactness introduced in \cite{Ma} and \cite{MMnls}, but due to the algebraic decay of the solitons, proving uniform estimates is more delicate.
For $k\in\{1,\ldots,K\}$, let $\lambda^\infty_k>0$, ${\mathbf y}^\infty_k\in\R^5$ and  $\bell_k \in \R^5$ with $|\bell_k|<1$, $\bell_k\neq \bell_{k'}$ for $k'\neq k$.

Let $S_n\to +\infty$ as ${n\to \infty}$ and, for each $n$, let $u_n$ be the (backwards) solution of \eqref{wave} with data at time $S_n$ 
\begin{equation}\label{da1}
  u_n(S_n,x) 
\sim\sum_{k=1}^K \frac{\iota_k}{(\lambda_k^\infty)^{3/2}} W_{\boldsymbol{\ell}_k}\left(\frac {x-{\boldsymbol{\ell}_k} S_n-\mathbf{y}_k^\infty} {\lambda_k^\infty}  \right) ,
\end{equation}
\begin{equation}\label{da2}
 \partial_t u(S_n,x) 
\sim - \sum_{k=1}^K \frac {\iota_k}{(\lambda_k^\infty)^{5/2}}  ({\bell_k}\cdot  \nabla W_{\boldsymbol{\ell}_k})\left(\frac {x-{\boldsymbol{\ell}_k} S_n-\mathbf{y}_k^\infty} {\lambda_k^\infty}  \right).
\end{equation}
(See \eqref{defun} for a precise definition of $(u_n(S_n),\partial_t u_n(S_n))$.
The goal is to prove the following uniform estimates on the time interval $[T_0,S_n]$,
\begin{equation}\label{es1}
  \left\|u_n(t) 
-\sum_{k=1}^K \frac{\iota_k}{(\lambda_k^\infty)^{3/2}} W_{\boldsymbol{\ell}_k}\left(\frac {.-{\boldsymbol{\ell}_k} t-\mathbf{y}_k^\infty} {\lambda_k^\infty}  \right)
\right\|_{\dot H^1}\lesssim \frac 1 t,
\end{equation}
\begin{equation}\label{es2}
  \left\|\partial_t u_n(t) 
+\sum_{k=1}^K \frac {\iota_k}{(\lambda_k^\infty)^{5/2}}\, {\bell_k}\cdot  \nabla W_{\boldsymbol{\ell}_k}\left(\frac {.-{\boldsymbol{\ell}_k} t-\mathbf{y}_k^\infty} {\lambda_k^\infty}  \right)
\right\|_{L^2}\lesssim \frac 1 t.
\end{equation}
for $T_0$ large independent of $n$. Indeed, the existence of a multi-soliton then follows easily  from  standard compactness arguments
(note that we also obtain bounds on weighted higher order Sobolev norms for $(u_n,\partial_t u_n)$ which facilitate the convergence).
Thus, we now focus on the proof of \eqref{es1}--\eqref{es2}.
Note first that such  long time stability estimates cannot be true for any initial data of the form \eqref{da1}--\eqref{da2}; indeed, to take into account the exponential instability of each soliton $W_{\bell_k}$, we need to adjust the initial condition $(u_n(S_n),\partial_t u_n(S_n))$. This adjustment relies on a simple topological argument on $K$ scalar parameters, first introduced in a similar context in \cite{CMM}.

We introduce $$
\e   = u_n- \sum_k W_k,\quad
\eta  = \partial_t u_n + \sum_k ({\bell_k}\cdot  \nabla W_k),
$$
where
$$
 W_k(t,x)= \frac{{\iota_k}}{\lambda_k^{3/2}(t)} W_{\boldsymbol{\ell}_k}\left(\frac {x-{\boldsymbol{\ell}_k} t-\mathbf{y}_k(t)} {\lambda_k(t)}  \right).
$$
By a standard procedure, in the definition of $W_k$, the modulation  parameters $\lambda_k(t)$ and $\mathbf{y}_k(t)$ are chosen close to $\lambda_k^{\infty}$ and $\mathbf{y}_k^ {\infty}$ in order to obtain suitable orthogonality conditions on $(\e ,\eta )$.
The equation  of $(\e,\eta)$ is thus coupled by equations on $\lambda_k$ and $\mathbf{y}_k$. See Lemma  \ref{le:4}.

The general strategy of the proof of the uniform estimates \eqref{es1}--\eqref{es2} is to use global   functionals that are locally of the form 
\begin{align*}
 \int_{x\sim \bell_k t+\mathbf{y}_k(t)}\textstyle |\nabla\e|^2+|\eta|^2 + 2  \left(\bell_k \cdot\nabla\e\right)\eta -\frac 73 |W_k|^{\frac 43} \e^2,
\end{align*}
around each soliton $W_k$, i.e. in   regions $x\sim \bell_k t+\mathbf{y}_k(t)$.
Note that the coercivity of such functional  under usual orthogonality conditions on $(\e,\eta)$ is  standard.
The difficulty is to ``glue'' these $K$ functionals to obtain a unique global functional on $(\e,\eta)$ which is locally adapted to each soliton $W_k$.

\smallskip

 In case (B) of Theorem \ref{th:1}, we assume $\bell_k=\ell_k \mathbf{e}_1$ and $-1<\ell_1<\ldots<\ell_K<1$.
To prove \eqref{es1}-\eqref{es2},  we introduce the following energy functional  
\begin{align*}
  & \mathcal H_K  =\int {\mathcal E}_K  +   2 \int ( \chi_K(t,x)  \partial_{x_1} \e )  \eta ,
\end{align*}
where ${\mathcal E}_K$ is the following ``linearized energy density''
\begin{equation}\label{i22}
\textstyle
 {\mathcal E}_K =|\nabla\e|^2+|\eta|^2-\frac35\left(\left|\SWk +\e\right|^{\frac{10}{3}}-\left|\SWk\right|^{\frac{10}{3}}-\frac{10}{3}\left|\SWk\right|^{\frac{4}{3}}\left(\SWk\right)\e\right),
\end{equation}
and the bounded function $\chi_K(t,x)$ is equal to $\ell_k$ in a   neighborhood of the soliton $W_k$ and close to  $\frac {x_1}t$ in ``transition regions'' between two solitons     (see \eqref{defchiK} for a precise definition). 
Note that the functional $\mathcal H_K$   is     inspired by the ones used in \cite{Ma} and \cite{MMnls} for the construction of multi-solitons for (gKdV) and (NLS) equations in energy subcritical cases.

\medskip

The functional $\mathcal H_K$ has  the following two important properties  (see Proposition \ref{mainprop} for more precise statements):

\smallskip

\noindent (1)   $\mathcal H_K$  is coercive, in the sense that (up to unstable directions, to be controled separately),
it controls the size of $(\e,\eta)$ in the energy space
$$
 \mathcal H_K\sim \|\e\|_{\dot H^1}^2+\|\eta\|_{L^2}^2.
$$

\noindent (2)  The   variation of $\mathcal H_K$ is  controled   on $[T_0,S_n]$  in the following (weak) sense
\begin{equation}\label{pourH}
 - \frac d{dt} \left(t^{2} \mathcal H_K \right)  \lesssim t^{-3}.
\end{equation}
Note that the term $t^{-3}$ in the right-hand side is related to interactions between solitons.

\smallskip

Therefore, integrating \eqref{pourH} on $[t,S_n]$, from \eqref{da1}--\eqref{da2}, we find the uniform bound, for any $t\in [T_0,S_n]$, 
$$
\|\e\|_{\dot H^1}+\|\eta\|_{L^2} \lesssim t^{-2}.
$$
By time  integration of the   equations of the parameters, the above estimate implies
$$
|\mathbf{y}_k(t)-\mathbf{y}_k^\infty|\lesssim t^{-1},\quad |\lambda_k(t)-\lambda_k^{\infty}|\lesssim t^{-1},
$$
and \eqref{es1}--\eqref{es2} follow.
 
 \subsection*{Acknowledgements}  \quad 

This work was partly supported by the project ERC 291214 BLOWDISOL. 

\section{Preliminaries}
\subsection{Notation}
We denote
$$\psl g {\tilde g}   =\int g \tilde g,\quad \nol g^2 = \int |g|^2,\quad \psh g {\tilde g} =\int \nabla g \cdot \nabla {\tilde g} ,
\quad \noh g^2=\int |\nabla g|^2.$$
For 
$$\fl g = \left(\begin{array}{c}g \\h\end{array}\right),
\ \fl {\tilde g} = \left(\begin{array}{c}\tilde g \\\tilde h\end{array}\right),$$
set
\begin{align*}
 \psl {\fl g} {\fl {\tilde g}}   = \psl g {\tilde g} + \psl h{\tilde h},\quad
 \pse {\fl g} {\fl {\tilde g}}   = \psh g {\tilde g} + \psl h{\tilde h},\quad 
\|\fl g\|_E^2 = \noh g^2 + \nol h^2.
\end{align*}
When $x_1$ is seen as a specific coordinate, denote
$$
\overline x = (x_2,\ldots, x_5),
\quad \overline \nabla g = (\partial_{x_2} g, \ldots, \partial_{x_5} g),
\quad \overline \Delta g = \sum_{j=2}^5 \partial_{x_j}^2 g.
$$
For $-1<\ell<1$,
$$ \pshb g{\tilde g} =(1-\ell^2)\int \partial_{x_1} g \partial_{x_1}\tilde g+  \int \overline \nabla g \cdot \overline \nabla \tilde g, 
\quad \nohb g^2=  \pshb gg$$
More generally, for $\bell \in \R^5$ such that $|\boldsymbol{\ell}|< 1$, 
 $$ \pshbb g{\tilde g} =\int \left[ \nabla g\cdot \nabla {\tilde g} -   (\bell\cdot\nabla g)(\bell\cdot\nabla \tilde g)\right], 
 \quad \nohbb g^2= \noh g^2 - \nol{\bell\cdot \nabla g}^2.$$
Observe that if we define
$$
g_\bell(x) = g\left(\left(\frac{1}{\sqrt{1-|\boldsymbol{\ell}|^2}}-1\right) \frac{\boldsymbol{\ell}(\boldsymbol{\ell}\cdot x)}{|\boldsymbol\ell|^2} +x\right),
$$
and similarly $\tilde g$, $\tilde g_\bell$, 
then
\begin{equation}\label{Tbbeta}
\pshbb {g_\bell}{\tilde g_\bell}= (1-|\bell|^ 2)^{\frac 12}\psh g{\tilde g}.
\end{equation}

Let $\Lambda$ and $\widetilde \Lambda$ be the $\dot H^1$ and $L^2$ scaling operators  defined as follows
\begin{equation}
\label{aL}
\Lambda g = \frac 32 g+ x \cdot \nabla g,
\quad \widetilde \Lambda g = \frac 5 2 g + x \cdot \nabla g,\quad
\widetilde\Lambda \nabla=\nabla\Lambda,\quad 
\fl \Lambda = \left(\begin{array}{c}\widetilde \Lambda  \\ \Lambda\end{array}\right).
\end{equation}
Let
$$
J=\left(\begin{array}{cc}0 & 1 \\-1 & 0\end{array}\right).
$$
 
Recall the  Hardy  and   Sobolev inequaliies, for any $v\in \dot H^1$,
\begin{equation}\label{z2}
\int \frac {|v|^2}{|x|^2} \lesssim \int |\nabla v|^2,
\end{equation}
\begin{equation}\label{z1}
\|v\|_{L^{10/3}} \lesssim  \|\nabla v\|_{L^2}.
\end{equation}
Set $\lg x \rg = (1+|x|^2)^{\frac 12}$ and 
$$
\|v\|_{Y^0}^2 = \int \left(|v(x)|^2 + |\nabla v(x)|^2\right) \lg x\rg  dx,
\quad
\|v\|_{Y^1}^2 = \int \left(|\nabla v(x)|^2 + |\nabla^2 v(x)|^2\right) \lg x \rg  dx.
$$
If $g\in C([t_1,t_2],Y^0)$ then the unique solution $v\in C([t_1,t_2],\dot H^1)$ of
$\partial_t^2 v - \Delta v = g$ with $v(t_1)=0$ and $\partial_t v(t_1)=0$, satisfies $(v,v_t) \in C([t_1,t_2],Y^1\times Y^0)$ and 
\begin{equation}\label{EEn}
\|(v,v_t)(t)\|_{Y^1\times Y^0} \leq \int_{t_1}^t \|g(s)\|_{Y^0} ds.
\end{equation}
Moreover, the following estimate holds, for all $v\in Y^1$,
\begin{equation}\label{esob}
\||v|^{\frac 43} v\|_{Y^0} \lesssim \|v\|_{\dot H^1}^{\frac 13}\|v\|_{Y^1}^{2}.
\end{equation}
Thus, it follows from a standard argument (fixed point) that  \eqref{wave} is locally well-posed in the space $Y^1\times Y^0$ with a time of existence depending only on the size of the $Y^1\times Y^0$   norm of the initial data.

For initial data in the energy space $\dot H^1\times L^2$, the Cauchy problem is also locally well-posed in a certain sense, using suitable Strichartz estimates ; we refer to section 2 of \cite{KM} and references therein.

Denote
$$
f(u)= |u|^{\frac 43} u,\quad F(u) = \frac {3}{10} |u|^{\frac {10}3}.
$$

\subsection{Energy linearization  around $W$}
Let
\begin{align*}
& L   = -\Delta   - f'(W) ,\quad 
\psl{L g}g =   \int |\nabla g|^2 -  f'(W) g^2,
\\
& H = \left(\begin{array}{cc} L & 0 \\0 & {\rm Id}\end{array}\right),\quad 
\psl{H \fl g} {\fl g} = \psl{L g}g+  \nol h^2.  
\end{align*}
Let  $\fl g$ be small in the energy space.
Then, expanding, integrating by parts, using the equation of $W$ and \eqref{z1}, one has
\begin{align}
E(W+g,h) & = E(W,0) -\int (\Delta W +f(W)) g
+\frac 12 \left( \int |\nabla g|^2 -f'(W) g^2\right)
 +\frac 12 \int h^2
\nonumber \\
&-  \int \left( F(W+g)- F(W)
-f(W) g -\frac 12 f'(W) g^2\right)\nonumber\\
& =E(W,0)+\frac 12\psl{L g}g+ \frac 12 \nol h^2 + O(\noh g^3).\label{enerlin}
\end{align}
In this paper addressing the case of several solitons, it is crucial to be able to spacially split the solitons. For some $0<\alpha \ll1$ to be fixed, set
\begin{equation}\label{phia}
\varphi (x) = (1+|x|^2)^{- \alpha}
\end{equation}

We gather here some properties of the operator $L$.

\begin{lemma}[Spectral properties of $L$]\label{le:Q}
\emph{(i) Spectrum.} The operator $L$ on $L^2$ with domain $H^2$ is a self-adjoint operator with essential spectrum $[0,+\infty)$, no positive eigenvalue and only one negative eigenvalue $-\lambda_0$, with a smooth radial positive eigenfunction $Y \in \mathcal S(\R^5)$.
Moreover,
\begin{equation}
\label{ker}
L (\Lambda W) = L (\partial_{x_j} W) =0, \quad \hbox{for any $j=1,\ldots,5$.}
\end{equation}

There exists $\mu>0$ such that, for all $g \in \dot H^1$, the following holds.\\
\emph{(ii) Coercivity with $W$ orthogonality (Appendix D of \cite{OR}).}
\begin{equation}\label{tst}
\psl {Lg}g\geq \mu \noh g^2 -\frac 1{\mu} \left( \psh g{\Lambda W}^2 + \sum_{j=1}^5 \psh g{\partial_{x_j} W}^2+\psh g{W}^2\right)
\end{equation}
\emph{(iii) Coercivity with  $Y$ orthogonality.} 
\begin{equation}\label{tstY}
\psl {Lg}g\geq \mu  \noh g^2 -\frac 1{\mu} \left( \psh g{\Lambda W}^2 + \sum_{j=1}^5 \psh g{\partial_{x_j} W}^2+\psl g{Y}^2\right)
\end{equation}
\emph{(iv) Localized coercivity.} For $\alpha>0$ small enough,
\begin{align}
& \int |\nabla g|^2 \varphi^2- f'(W) g^2 
 \geq   \mu  \int |\nabla g|^2 \varphi^2 
-\frac 1{\mu} \left( \psh g{\Lambda W}^2 + \sum_{j=1}^5 \psh g{\partial_{x_j} W}^2+\psl g{Y}^2\right)
\label{stloc}\end{align}
\end{lemma}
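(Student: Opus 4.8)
The plan is to prove Lemma \ref{le:Q} by treating the four statements in order, each reducing to the previous ones. For part (i), the operator $L = -\Delta - f'(W)$ with $f'(W) = \frac73 W^{4/3}$ is a compact perturbation of $-\Delta$ (since $W^{4/3}$ decays like $|x|^{-4}$, which is $L^{5/2+}$-type and relatively compact on $\dot H^1$ in dimension $5$), so Weyl's theorem gives essential spectrum $[0,+\infty)$. The kernel relations \eqref{ker} follow by differentiating the equation $\Delta W + W^{7/3}=0$ in the scaling and translation directions: $\partial_{x_j} W$ and $\Lambda W$ are the infinitesimal generators of the symmetries leaving the equation invariant. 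That $L$ has exactly one negative eigenvalue $-\lambda_0$ with a positive radial ground state $Y$, no zero resonance beyond the kernel above, and no positive eigenvalue, is the standard spectral picture for the linearization at the Aubin--Talenti bubble; I would cite the nondegeneracy results for $W$ (e.g. the analysis underlying \cite{DKM1,DKM2,KM}) together with Sturm--Liouville / Perron--Frobenius arguments in the radial sector, and the absence of embedded eigenvalues from a Kato-type or Agmon argument using the decay of $f'(W)$.

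For part (ii), the coercivity modulo the three-parameter family $\{\Lambda W, \partial_{x_j} W, W\}$, I would argue by contradiction using a minimizing sequence: if \eqref{tst} fails, there is a sequence $g_n$ with $\noh{g_n}=1$, $\psl{Lg_n}{g_n}\to \inf \le 0$, and the three inner products tending to $0$; extracting a weak limit $g_\infty$ and using the compactness of the perturbation $\int f'(W) g_n^2$, one gets $\psl{Lg_\infty}{g_\infty}\le 0$ with $g_\infty$ orthogonal to $\Lambda W$, the $\partial_{x_j}W$ and $W$. Since $L\ge 0$ on the orthogonal complement of $Y$, and the $Y$-component is controlled because... this is where care is needed: orthogonality to $W$ is used to kill the negative direction $Y$, via the standard fact that $\psl{W}{Y}\neq 0$ together with $\psl{LW}{W}<0$ (indeed $LW = -\Delta W - f'(W)W = W^{7/3} - \frac73 W^{7/3} = -\frac43 W^{7/3}$, so $\psl{LW}{W} = -\frac43\int W^{10/3} < 0$, forcing $W$ to have nontrivial projection on $Y$). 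This is precisely the content invoked as ``Appendix D of \cite{OR}'', so I would cite it directly.

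Part (iii) follows from part (ii) by a purely linear-algebraic swap of the orthogonality conditions: replacing $\psh{g}{W}$ by $\psl{g}{Y}$ is legitimate because $\mathrm{span}\{\Lambda W, \partial_{x_j}W, W\}$ and $\mathrm{span}\{\Lambda W, \partial_{x_j}W, Y\}$ are transverse in the right sense — concretely, $Y$ is not orthogonal to $W$ modulo $\mathrm{span}\{\Lambda W,\partial_{x_j}W\}$ by the computation above, so any $g$ orthogonal to $\{\Lambda W,\partial_{x_j}W,Y\}$ can be written as $g' + cW$ with $g'$ orthogonal to $\{\Lambda W,\partial_{x_j}W,W\}$ and $c$ small relative to $\noh{g'}$, whence \eqref{tst} transfers with an adjusted $\mu$. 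I would phrase this as a short lemma on equivalence of coercivity under change of finitely many transverse orthogonality conditions.

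The main obstacle is part (iv), the localized coercivity with the weight $\varphi^2 = (1+|x|^2)^{-2\alpha}$. The difficulty is that $\varphi$ does not commute with $\nabla$, so $\int |\nabla g|^2\varphi^2 - f'(W)g^2$ is not simply $\psl{L(\varphi g)}{\varphi g}$ up to harmless terms. The strategy is to set $h = \varphi g$ and compute $\int |\nabla g|^2 \varphi^2 = \int |\nabla h|^2 - \int |\nabla\varphi|^2 g^2 + (\text{lower order})$, more precisely $\int|\nabla(\varphi g)|^2 = \int \varphi^2|\nabla g|^2 + \int g^2|\nabla\varphi|^2 + 2\int \varphi g\,\nabla\varphi\cdot\nabla g$ and integrating the cross term by parts to get $\int\varphi^2|\nabla g|^2 = \int|\nabla h|^2 + \int(\varphi\Delta\varphi - |\nabla\varphi|^2)g^2 = \int |\nabla h|^2 + O(\alpha)\int \frac{g^2}{\langle x\rangle^2}$, using $|\nabla\varphi|,\ |\Delta\varphi| \lesssim \alpha\langle x\rangle^{-1-2\alpha}$. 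Then $\int |\nabla g|^2\varphi^2 - f'(W)g^2 = \psl{Lh}{h} + \int f'(W)(h^2 - \varphi^2 g^2) + O(\alpha)\int\frac{g^2}{\langle x\rangle^2}$; the middle term vanishes since $h = \varphi g$, so only the $O(\alpha)$ error remains, absorbable by Hardy's inequality \eqref{z2} for $\alpha$ small. Applying \eqref{tstY} to $h$ gives coercivity in $\noh{h}^2 = \int|\nabla(\varphi g)|^2$, which again differs from $\int|\nabla g|^2\varphi^2$ by $O(\alpha)$ Hardy-controlled terms; and the orthogonality quantities $\psh{h}{\Lambda W}$ etc. differ from $\psh{g}{\Lambda W}$ by $O(\alpha)$ times Hardy-type quantities because $\Lambda W, \partial_{x_j}W, Y$ are Schwartz and $1-\varphi = O(\alpha\log\langle x\rangle)$ on their effective support — so these are again absorbed. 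Choosing $\alpha$ small enough to beat all the Hardy constants closes the estimate. I would remark that this localized version is what makes the multi-soliton gluing possible, since it lets the coercivity be run region-by-region.
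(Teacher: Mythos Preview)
Your overall strategy for all four parts matches the paper's: (i) is cited as standard, (ii) is deferred to \cite{OR}, (iii) is deduced from (ii) by the fact that $\psl{LY}{Y}<0$ (equivalently, your observation that $W$ has nontrivial $Y$-component since $\psl{LW}{W}<0$), and (iv) is proved by applying \eqref{tstY} to $h=\varphi g$ and controlling all commutator errors by $O(\alpha)$ Hardy terms. So the architecture is correct.

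There is, however, one genuine slip in your computation for (iv). You write
\[
\int |\nabla g|^2\varphi^2 - \int f'(W)g^2 \;=\; \psl{Lh}{h} + \int f'(W)\bigl(h^2 - \varphi^2 g^2\bigr) + O(\alpha)\int\frac{g^2}{\langle x\rangle^2},
\]
and claim the middle term vanishes since $h=\varphi g$. But the potential term on the left-hand side of \eqref{stloc} is $\int f'(W)g^2$, \emph{not} $\int f'(W)\varphi^2 g^2$. The correct identity is
\[
\int |\nabla g|^2\varphi^2 - \int f'(W)g^2 \;=\; \psl{Lh}{h} - \int f'(W)(1-\varphi^2)g^2 + O(\alpha)\int\frac{(g\varphi)^2}{\langle x\rangle^2},
\]
and the term $\int f'(W)(1-\varphi^2)g^2$ does \emph{not} vanish. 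It must be estimated separately: since $W^{4/3}\lesssim \langle x\rangle^{-4}$ and $(1-\varphi^2)/\varphi^2 \lesssim \langle x\rangle^{4\alpha}$, one has
\[
\int f'(W)(1-\varphi^2)g^2 \;\le\; \left\|\frac{1-\varphi^2}{\varphi^2}\langle x\rangle^2 W^{4/3}\right\|_{L^\infty}\int\frac{(g\varphi)^2}{\langle x\rangle^2} \;\lesssim\; \delta(\alpha)\,\noh{g\varphi}^2,
\]
with $\delta(\alpha)\to 0$ as $\alpha\to 0$, which is then absorbed. This is exactly the estimate the paper records, and without it your argument has a hole.

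A minor point: $\Lambda W$ and $\partial_{x_j}W$ are not Schwartz (they decay only like $\langle x\rangle^{-3}$); only $Y$ is. Your control of $\psh{h}{\Lambda W}-\psh{g}{\Lambda W}$ still works, but via the decay of $\Delta(\Lambda W)\sim\langle x\rangle^{-5}$ together with Cauchy--Schwarz and Hardy, not via a Schwartz bound on $\Lambda W$ itself.
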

\begin{proof} (i) contains  well-known facts on $L$ that are easily checked directly.
We refer to Appendix D of \cite{OR} for the proof of \eqref{tst}. The proof of (iii) is standard since $(LY,Y)<0$.

Proof of \eqref{stloc}.
By direct computations 
$$
\int |\nabla (g\varphi)|^2 = \int |\nabla g|^2 \varphi^2 - \int |g|^2 \varphi \Delta \varphi.
$$
Note that (here the space dimension is $5$)
$$
\Delta \varphi= - 2\alpha\left( (3-2 \alpha) |x|^2 + 5 \right) \frac {\varphi}{(1+|x|^2)^2},
$$
and thus
$|\Delta \varphi | \leq 10  \alpha\frac {\varphi}{\langle x\rangle^2}$, and thus by \eqref{z2},
$$
    \int |g|^2 \varphi |\Delta \varphi| \leq 10 \alpha  \int |g|^2 \frac {\varphi^2}{\langle x\rangle^2}\leq 
    \delta( \alpha )  \int |\nabla (g\varphi)|^2,
$$
where $\delta(\alpha)\to 0$ as $\alpha\to 0$.
This implies the following estimate
\begin{equation}\label{eq:gd}
\left|  \int |\nabla g|^2 \varphi^2  -\int |\nabla (g\varphi)|^2 \right|  \leq   \delta(\alpha ) \int |\nabla (g\varphi)|^2.
\end{equation}

We check that
\begin{equation}\label{eq:qo}
 | \psh {g (1-\varphi)}{ \Lambda W}|
+   |\psh{g (1-\varphi}{\partial_{x_j}W}|+| \psl {g(1-\varphi)}{Y}|  \leq \delta (\alpha) \noh{g\varphi}. \end{equation}
Indeed, by the Cauchy-Schwarz inequality, the decay properties of  $W$  and Hardy inequality,
\begin{align*}
&\psh {g (1-\varphi)}{\Lambda W}^2 = \psl{g (1-\varphi)}{\Delta(\Lambda W)}^2  \\
&\leq \int \frac {(g \varphi)^2}{\langle x\rangle^2} \int |\Delta (\Lambda W)|^2 | 1-\varphi|^2 \frac {\langle x\rangle^2}{\varphi^2}
\leq \delta(\alpha) \noh {g\varphi}^2 ;
\end{align*}
the rest of the proof of \eqref{eq:qo} is similar.
We also have
\begin{equation}
\label{eq:qn}
\int W^{\frac 43} g^2 (1-\varphi^2)\leq  \left\|\frac {1-\varphi^2 }{\varphi^2} \langle x \rangle^2 W^{\frac 43}\right\|_{L^\infty}\int \frac {(g \varphi)^2}{\langle x\rangle^2}
 \lesssim \delta(\alpha) \noh {g\varphi}^2 .
\end{equation}

By \eqref{tstY}  applied to $g\varphi$ and then \eqref{eq:qo}, for $\alpha$ small,
\begin{align*}
  \psl {L (g\varphi)}{g\varphi} & \geq  \mu \noh {g \varphi}^2 
  - \frac 1{\mu}  \left( 
  \psh {g\varphi}{\Lambda W}^2 + \sum_{j=1}^5 \psh{g\varphi}{\partial_{x_j} W}^2+ \psh {g\varphi}{ W}^2\right) 
\\ & 
\geq \left( \mu - \delta(\alpha)\right)  \noh {g \varphi}^2 
  - \frac 1{\mu}  \left(  
  \psh g{\Lambda W}^2 + \sum_{j=1}^5 \psh{g}{\partial_{x_j} W}^2
+ \psh gW^2\right)
 \end{align*}
 Finally, using \eqref{eq:gd} and \eqref{eq:qn} we get \eqref{stloc}, for $\alpha$ small enough.
\end{proof}

\subsection{Energy linearization around $W_\ell$}

For $-1<\ell<1$,  let
\begin{equation}\label{eqWb}
W_{\ell }(x) = W\left(\frac {x_1}{\sqrt{1-\ell^2}}, \overline x\right),\quad 
(1-\ell^2) \partial_{x_1}^2 W_{\ell} + \overline \Delta W_{\ell} + W_\ell^{\frac 73}=0,
\end{equation}
so that $u(t,x) = W_{\ell }\left(x_1 - \ell t,\bar x\right)$ is a solution of \eqref{wave}. 
Note that
\begin{equation}\label{EWb}
E(W_\ell,-\ell  \partial_{x_1} W_\ell)-\ell^2\int|\partial_{x_1}W_\ell|^2
 =   (1-\ell^2)^{\frac 12} E(W,0).
\end{equation}
Let 
\begin{align}
& 
L_{\ell}   = -(1-\ell^2) \partial_{x_1}^2  -\overline \Delta   - f'(W_\ell) ,\\
&\psl{L_\ell g}g =   (1-\ell^2) \int |\partial_{x_1} g|^2+ \int \left(|\overline \nabla g|^2 -f'(W_\ell) g^2\right)
,\\
& H_\ell = \left(\begin{array}{cc}  -\Delta -f'(W_\ell) & -\ell \partial_{x_1} \\ \ell \partial_{x_1}& {\rm Id}\end{array}\right),\quad
\psl {H_\ell \fl g}{\fl g}   = 
\psl{L_\ell g}g + \| \ell \partial_{x_1} g + h\|_{L^2}^2.
\label{Hbeta}
\end{align}
As before, $L_\ell$ and $H_\ell$ are related to the linearization of the energy around $W_\ell$. Indeed,
proceeding as in \eqref{enerlin}, \begin{align*}
  & E(W_\ell + g, -\ell \partial_{x_1} W_\ell+h) + \ell \int \partial_{x_1}(W_\ell+g) (-\ell \partial_{x_1} W_\ell + h) \\
  & = E(W_\ell  , -\ell \partial_{x_1} W_\ell ) - \ell^2 \int (\partial_{x_1}W_\ell )^2\\
  & - \int (\Delta W_\ell) g - \int f(W_\ell) g- \ell \int (\partial_{x_1}W_\ell) h + \ell^2 \int (\partial_{x_1}^2 W_\ell)g  + \ell \int (\partial_{x_1}W_\ell) h\\
  & + \frac 12 \int |h|^2 + \frac 12  \int \left(|\nabla g|^2 - f'(W_\ell) g^2\right)
  + \ell \int h \partial_{x_1} g + O(\noh g^3).\end{align*}
and thus, using \eqref{eqWb} and \eqref{EWb},
\begin{align*}
  & E(W_\ell + g, -\ell \partial_{x_1} W_\ell+h) + \ell \int \partial_{x_1}(W_\ell+g) (-\ell \partial_{x_1} W_\ell + h) \\
  & =  (1-\ell^2)^{\frac 12} E(W  , 0 ) +\frac 12 \psl {H_\ell \fl g}{\fl g}   + O(\noh g^3).
\end{align*}
The following functions appear when studying the properties of the operators $H_\ell$ and $H_\ell J$
\begin{align*}
\fl Z_\ell^\Lambda = \left(\begin{array}{c} \Lambda W_\ell \\ - \ell \partial_{x_1}\Lambda W_\ell\end{array}\right),\quad
\fl Z_\ell^{\nabla_j} = \left(\begin{array}{c} \partial_{x_j} W_\ell \\ - \ell \partial_{x_1}\partial_{x_j} W_\ell\end{array}\right),\quad
\fl Z_\ell^W = \left(\begin{array}{c} W_\ell \\ - \ell \partial_{x_1}W_\ell \end{array}\right),
\end{align*}
\begin{align*}
Y_{\ell}(x) = Y\left(\frac {x_1}{\sqrt{1-\ell^2}},\overline x\right),
\quad 
\fl Z_\ell^\pm = \left(\begin{array}{c} \left(\ell \partial_{x_1} Y_\ell 
\pm \frac {\sqrt{\lambda_0}}{\sqrt{1-\ell^2}} Y_\ell\right) e^{\pm \frac {\ell \sqrt{\lambda_0}}{\sqrt{1-\ell^2}}x_1} \\ 
Y_\ell  e^{\pm \frac {\ell \sqrt{\lambda_0}}{\sqrt{1-\ell^2}}x_1 } \end{array}\right).
\end{align*}
We gather below several technical facts.
\begin{claim} The following hold for any $-1<\ell<1$,\\
\emph{(i) Properties of $L_\ell$.}
\begin{align}
& L_\ell (\Lambda W_\ell) = L_\ell (\partial_{x_j} W_\ell)=0,\quad   L_\ell Y_\ell = -\lambda_0 Y_\ell,\quad 
L_\ell W_\ell = - \frac 43 W_\ell^{\frac 73},\label{LW}
\end{align}
\emph{(ii) Properties of $H_\ell$ and $H_\ell J$.}
 \begin{equation}\label{ZW}
  H_\ell   \fl Z_\ell^\Lambda=  H_\ell   \fl Z_\ell^{\nabla_j}=0,\quad
  H_\ell \fl Z_\ell^W= - \frac 43 \left(\begin{array}{c} W_\ell^{\frac 73} \\ 0 \end{array}\right),
 \end{equation} 
 \begin{equation}\label{Zpm}
\psl {H_\ell \fl Z_\ell^W}{\fl Z_\ell^W} = -\frac 43 \int W_\ell^{\frac {10}3},
\quad
  - H_\ell J (\fl Z_\ell^\pm) = \pm \sqrt{\lambda_0} (1-\ell^2)^{\frac 12} \fl Z_\ell^\pm,
  \end{equation}
\begin{equation}
\label{oZ} 
\pse {\fl Z_\ell^\Lambda}{\fl Z_\ell^W}=
\pse {\fl Z_\ell^{\nabla_j}}{\fl Z_\ell^W}=0,
\quad \psl { \fl Z_\ell^\Lambda}{\fl Z_\ell^\pm}=
\psl { \fl Z_\ell^{\nabla_j}}{\fl Z_\ell^\pm}=0.
\end{equation} 
\emph{(iii) Antecedents.} 
There exist $\fl z_\ell^\pm$ such that
\begin{equation}
\label{eq:pz}
H_\ell \fl z_\ell^\pm = \fl Z_\ell^\pm, \quad
\psl {H_\ell \fl z_\ell^\pm}{\fl z_\ell^\pm}=0,\quad 
\pse { \fl z_\ell^\pm}{\fl Z_\ell^\Lambda} =
\pse { \fl z_\ell^\pm}{\fl Z_\ell^{\nabla_j}}=0 
\end{equation}
\end{claim}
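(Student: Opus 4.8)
Every assertion reduces to an explicit verification built on three ingredients: the profile equations \eqref{defW}, \eqref{eqWb}; the spectral facts for $L$ in Lemma~\ref{le:Q}(i), notably \eqref{ker} and $LY=-\lambda_0Y$; and the change of variables $\Phi_\ell(x)=(x_1/\sqrt{1-\ell^2},\overline x)$, which conjugates $L_\ell$ to $L$, i.e. $L_\ell(h\circ\Phi_\ell)=(Lh)\circ\Phi_\ell$ for all $h$ (one line, using $(1-\ell^2)\partial_{x_1}^2(h\circ\Phi_\ell)=(\partial_1^2h)\circ\Phi_\ell$ and $W_\ell=W\circ\Phi_\ell$). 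For (i): since $\Lambda W_\ell=(\Lambda W)\circ\Phi_\ell$, $\partial_{x_j}W_\ell$ is a constant multiple of $(\partial_jW)\circ\Phi_\ell$, and $Y_\ell=Y\circ\Phi_\ell$, the kernel identities and $L_\ell Y_\ell=-\lambda_0Y_\ell$ in \eqref{LW} follow from \eqref{ker} and $LY=-\lambda_0Y$ transported through $\Phi_\ell$; the identity $L_\ell W_\ell=-\tfrac43W_\ell^{7/3}$ is the direct substitution of \eqref{eqWb} together with $f'(W_\ell)W_\ell=\tfrac73W_\ell^{7/3}$.

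For (ii), the equalities \eqref{ZW} are matrix multiplications: by the off-diagonal structure of $H_\ell$, the second component of $H_\ell$ applied to each of $\fl Z_\ell^\Lambda,\fl Z_\ell^{\nabla_j},\fl Z_\ell^W$ vanishes identically, while the first component equals $(-\Delta+\ell^2\partial_{x_1}^2-f'(W_\ell))$, i.e. $L_\ell$, applied to the top entry (using $-\Delta+\ell^2\partial_{x_1}^2=-(1-\ell^2)\partial_{x_1}^2-\overline\Delta$); then invoke \eqref{LW}. Consequently $\psl{H_\ell\fl Z_\ell^W}{\fl Z_\ell^W}=-\tfrac43\int W_\ell^{7/3}W_\ell=-\tfrac43\int W_\ell^{10/3}$. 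The one computation with content is the eigenvalue identity $-H_\ell J\fl Z_\ell^\pm=\pm\sqrt{\lambda_0}(1-\ell^2)^{1/2}\fl Z_\ell^\pm$: writing $-H_\ell J=\left(\begin{smallmatrix}-\ell\partial_{x_1}&\Delta+f'(W_\ell)\\ {\rm Id}&-\ell\partial_{x_1}\end{smallmatrix}\right)$ and plugging in $\fl Z_\ell^\pm$, one differentiates the weight $e^{\pm\kappa x_1}$ with $\kappa=\ell\sqrt{\lambda_0}/\sqrt{1-\ell^2}$, replaces $\overline\Delta Y_\ell+f'(W_\ell)Y_\ell$ via $L_\ell Y_\ell=-\lambda_0Y_\ell$, and checks that the $\partial_{x_1}^2Y_\ell$ terms cancel while the $\partial_{x_1}Y_\ell$ and $Y_\ell$ terms recombine exactly into $\pm\sqrt{\lambda_0}(1-\ell^2)^{1/2}\fl Z_\ell^\pm$. (Heuristically $\fl Z_\ell^\pm$ is the Lorentz boost of the mode $(\pm\sqrt{\lambda_0}Y,Y)$ of the linearized flow around the static $W$, the temporal factor $e^{\pm\sqrt{\lambda_0}t}$ becoming $e^{\pm\kappa x_1}$.) I would also stress that, since $Y$ decays like $e^{-\sqrt{\lambda_0}|x|}$, the weight $e^{\pm\kappa x_1}$ is exactly weak enough that $\fl Z_\ell^\pm$ still decays in every direction, so all pairings below converge.

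For the orthogonality relations \eqref{oZ}: $\pse{\fl Z_\ell^\Lambda}{\fl Z_\ell^W}=\psh{\Lambda W_\ell}{W_\ell}+\ell^2\psl{\partial_{x_1}\Lambda W_\ell}{\partial_{x_1}W_\ell}$, and both terms vanish by the elementary identities $\psh{\Lambda g}{g}=0$ and $\psl{\widetilde\Lambda g}{g}=0$ (integration by parts, $\mathrm{div}\,x=5$ balancing the weights $\tfrac32,\tfrac52$ in \eqref{aL}), applied to $g=W_\ell$ and, via $\nabla\Lambda=\widetilde\Lambda\nabla$, to $g=\partial_{x_1}W_\ell$; likewise $\pse{\fl Z_\ell^{\nabla_j}}{\fl Z_\ell^W}=0$ since each term is the integral of a total $x_j$-derivative. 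For $\psl{\fl Z_\ell^\Lambda}{\fl Z_\ell^\pm}=\psl{\fl Z_\ell^{\nabla_j}}{\fl Z_\ell^\pm}=0$, I would use that $J$ is skew-adjoint and $H_\ell$ symmetric for $\psl{\cdot}{\cdot}$ (a short integration by parts on the cross-terms of \eqref{Hbeta}): then $\pm\sqrt{\lambda_0}(1-\ell^2)^{1/2}\psl{\fl Z_\ell^\Lambda}{\fl Z_\ell^\pm}=\psl{\fl Z_\ell^\Lambda}{-H_\ell J\fl Z_\ell^\pm}=-\psl{H_\ell\fl Z_\ell^\Lambda}{J\fl Z_\ell^\pm}=0$ by \eqref{ZW}, and the prefactor is nonzero because $\lambda_0>0$ and $|\ell|<1$.

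For (iii): since $H_\ell J\fl Z_\ell^\pm=\mp\sqrt{\lambda_0}(1-\ell^2)^{1/2}\fl Z_\ell^\pm$ with nonzero coefficient, $\fl z:=\mp(\sqrt{\lambda_0}(1-\ell^2)^{1/2})^{-1}J\fl Z_\ell^\pm$ already solves $H_\ell\fl z=\fl Z_\ell^\pm$ and is exponentially localized. To enforce the remaining conditions, add a kernel element $\fl k\in{\rm span}\{\fl Z_\ell^\Lambda,\fl Z_\ell^{\nabla_1},\dots,\fl Z_\ell^{\nabla_5}\}$ so that $\fl z_\ell^\pm:=\fl z+\fl k$ is $\pse{\cdot}{\cdot}$-orthogonal to each $\fl Z_\ell^\Lambda,\fl Z_\ell^{\nabla_j}$; this is solvable since $\pse{\cdot}{\cdot}$ is positive definite, hence nondegenerate, on that $6$-dimensional space (its generators are independent in $\dot H^1\times L^2$, their first components being independent). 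Finally $\psl{H_\ell\fl z_\ell^\pm}{\fl z_\ell^\pm}=\psl{\fl Z_\ell^\pm}{\fl z}+\psl{\fl Z_\ell^\pm}{\fl k}$, where the first term vanishes trivially ($\psl{\fl g}{J\fl g}\equiv0$) and the second by the $L^2$-orthogonalities in \eqref{oZ} just established. The main — and essentially only — obstacle is the eigenvalue identity for $\fl Z_\ell^\pm$: the coefficient $\sqrt{\lambda_0}/\sqrt{1-\ell^2}$ in the top entry and the exponential rate $\kappa$ are rigidly forced by the requirement that the $Y_\ell$-, $\partial_{x_1}Y_\ell$- and $\partial_{x_1}^2Y_\ell$-terms recombine, and one must also keep track that this weight still leaves $\fl Z_\ell^\pm$ integrable; everything else is bookkeeping with integration by parts and Lemma~\ref{le:Q}(i).
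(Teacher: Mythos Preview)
Your proof is correct and follows essentially the same route as the paper. The paper packages the reduction of \eqref{ZW} into the single identity $H_\ell\left(\begin{smallmatrix}g\\-\ell\partial_{x_1}g\end{smallmatrix}\right)=\left(\begin{smallmatrix}L_\ell g\\0\end{smallmatrix}\right)$, computes the eigenvalue relation for $\fl Z_\ell^\pm$ by the same direct expansion you describe, obtains the $L^2$-orthogonalities in \eqref{oZ} via self-adjointness of $H_\ell$ exactly as you do, and constructs $\fl z_\ell^\pm$ by the same formula $\mp(\sqrt{\lambda_0}(1-\ell^2)^{1/2})^{-1}J\fl Z_\ell^\pm$ plus a kernel correction; your additional remarks on integrability of $\fl Z_\ell^\pm$ and nondegeneracy of $\pse{\cdot}{\cdot}$ on the kernel are welcome clarifications that the paper leaves implicit.
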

\begin{proof}
The proof of \eqref{LW} follows from the same properties at $\ell=0$.

Next, note  that for any function $g$,  
\begin{equation}  \label{HZ}
  H_\ell \left(\begin{array}{c} g\\ -\ell \partial_{x_1} g \end{array}\right) = \left(\begin{array}{c} L_\ell g \\ 0 \end{array}\right),\quad
\psl{H_\ell \left(\begin{array}{c} g\\ -\ell \partial_{x_1} g\end{array}\right)}{\left(\begin{array}{c} g \\ -\ell \partial_{x_1} g \end{array}\right)} = \psl{L_\ell g}{g}.
\end{equation}

\emph{Proof of \eqref{ZW}.} First, by \eqref{HZ} and \eqref{LW}, $  H_\ell   (\fl Z_\ell^\Lambda)=    H_\ell   (\fl Z_\ell^{\nabla_j})=0$.
The identity concerning $\fl Z_\ell^W$ also follows directly from \eqref{HZ} and \eqref{LW}.
\medskip

\emph{Proof of \eqref{Zpm}.} 
Note that
$$
-H_\ell J =   \left(\begin{array}{cc} -\ell \partial_{x_1}  & \Delta + \frac 73 W_\ell^{\frac 43} \\ {\rm Id}& - \ell \partial_{x_1}\end{array}\right).
$$
On the one hand,
\begin{align*}
&- \ell  \partial_{x_1}\left( \left(\ell \partial_{x_1} Y_\ell 
\pm \frac {\sqrt{\lambda_0}}{\sqrt{1-\ell^2}} Y_\ell\right) e^{\pm \frac {\ell \sqrt{\lambda_0}}{\sqrt{1-\ell^2}}x_1}\right)   +\Delta\left(Y_\ell  e^{\pm \frac {\ell \sqrt{\lambda_0}}{\sqrt{1-\ell^2}}x_1 }\right) +\frac 73 W_\ell^{\frac 43}Y_\ell  e^{\pm \frac {\ell \sqrt{\lambda_0}}{\sqrt{1-\ell^2}}x_1}\\
& = -(L_\ell Y_\ell) e^{\pm \frac {\ell \sqrt{\lambda_0}}{\sqrt{1-\ell^2}}x_1}
\pm \frac{(1-\ell^2)\ell\sqrt{\lambda_0}}{\sqrt{1-\ell^2}} (\partial_{x_1} Y_\ell) e^{\pm \frac {\ell \sqrt{\lambda_0}}{\sqrt{1-\ell^2}}x_1}\\
&= \pm \sqrt{\lambda_0}(1-\ell^2)^{\frac 12} \left( \pm\sqrt{\lambda_0} (1-\ell^2)^{-\frac 12}Y_\ell  +{\ell}  (\partial_{x_1} Y_\ell)\right) e^{\pm \frac {\ell \sqrt{\lambda_0}}{\sqrt{1-\ell^2}}x_1}.\end{align*}
On the other hand,
\begin{align*}
&\left(\ell \partial_{x_1} Y_\ell 
\pm \frac {\sqrt{\lambda_0}}{\sqrt{1-\ell^2}}Y_\ell\right) e^{\pm \frac {\ell \sqrt{\lambda_0}}{\sqrt{1-\ell^2}}x_1} -\ell \partial_{x_1} \left(Y_\ell  e^{\pm \frac {\ell \sqrt{\lambda_0}}{\sqrt{1-\ell^2}}x_1 } \right)
 =\pm \sqrt{\lambda_0}(1-\ell^2)^{\frac 12} Y_\ell  e^{\pm \frac {\ell \sqrt{\lambda_0}}{\sqrt{1-\ell^2}}x_1 }\end{align*}
Thus,
$
- H_\ell J (\fl Z_\ell^\pm) = \pm \sqrt{\lambda_0}(1-\ell^2)^{\frac 12}\fl Z_\ell^\pm$.

\medskip

\emph{Proof of \eqref{oZ}.} 
Since $\psl{\partial_{x_j} \Lambda W}{\partial_{x_j} W}=0$
($\dot H^ 1$ scaling) and  $\psl{\partial_{x_j}  \partial_{x_{j'}}{W}}{\partial_{x_j} W}=0$
 (by parity), we have 
$\pse {\fl Z_\ell^\Lambda}{\fl Z_\ell^W}=
\pse {\fl Z_\ell^{\nabla_k}}{\fl Z_\ell^W}=0$.
Next, from \eqref{oZ}, the fact that $H_\ell$ is self-adjoint in $L^2$ and \eqref{ZW}, we have
$$\mp \sqrt{\lambda_0}(1-\ell^2)^{\frac 12}
\psl { \fl Z_\ell^\Lambda}{\fl Z_\ell^\pm}=
\psl { \fl Z_\ell^\Lambda}{ H_\ell J (\fl Z_\ell^\pm) }
=\psl {H_\ell  \fl Z_\ell^\Lambda}{ J (\fl Z_\ell^\pm) }=0.$$
The identity 
$\psl { \fl Z_\ell^{\nabla_j}}{\fl Z_\ell^\pm}=0$ is proved in a similar way.

\medskip

\emph{Proof of \eqref{eq:pz}.}
We set 
$$
\fl z_\ell^\pm = \mp\frac {J \fl Z_\ell^\pm}{\sqrt{\lambda_0}(1-\ell^2)^{1/2}}  
+ \alpha^{\Lambda,\pm} \fl Z_{\ell}^\Lambda
+\sum_{j=1}^5 \alpha^{\nabla_j,\pm}  \fl Z_{\ell}^{\nabla_j},
$$
where $\alpha^{\Lambda,\pm}$ and $\alpha^{\nabla_j,\pm} $ are chosen so that
$$
\pse { \fl z_\ell^\pm}{\fl Z_\ell^\Lambda} =
\pse { \fl z_\ell^\pm}{\fl Z_\ell^{\nabla_j}}=0 .
$$
By \eqref{ZW} and \eqref{Zpm}, we have $H_\ell \fl z_\ell^\pm = \fl Z_\ell^\pm$.
Finally, $H_\ell$ being self-adjoint, we have
$$
\psl {H_\ell \fl z_\ell^\pm}{\fl z_\ell^\pm}=\mp
\psl {H_\ell \fl z_\ell^\pm}{-\frac {J \fl Z_\ell^\pm}{\sqrt{\lambda_0}(1-\ell^2)}}=
\mp\frac 1{\sqrt{\lambda_0}(1-\ell^2)^{1/2}}\psl {\fl Z_\ell^\pm}{J \fl Z_\ell^\pm}=0.
$$
\end{proof}

We claim the following coercivity results with $\fl Z_k^{\pm}$ orthogonalities.

\begin{lemma}\label{pr:22}
Let $-1<\ell<1$. There exists $\mu>0$ such that,  for all $\fl g \in \dot H^1\times L^2$, the following holds.\\
 \emph{\rm (i)  Coercivity of $H_\ell$ with $Z_{\ell}^{\pm}$ orthogonalities.} 
\begin{align}
\psl {H_\ell \fl g}{\fl g} &\geq \mu  \|\fl g\|_E^2 
- \frac 1{\mu}\left( \pshb{g}{\Lambda W_\ell}^2 +\sum_{j=1}^5  \pshb {g}{\partial_{x_j}W_\ell}^2
+ \psl {\fl g}{\fl Z_\ell^{+}}^2 + \psl {\fl g}{\fl Z_\ell^{-}}^2\right)\label{eq:22}.\end{align}
\emph{\rm (ii) Localized coercivity.}  For $\alpha>0$ small enough,
\begin{align}
&    \int \left(|  \nabla g|^2 \varphi^2 -f'(W_\ell) g^2  +    h^2 \varphi^2
 + 2 \ell   (\partial_{x_1} g) h   \varphi^2 \right)
\nonumber \\ &\geq \mu  \int  \left(|\nabla g|^2   +   h^2 \right)\varphi^2  
  - \frac 1{\mu }\left( \psh{g}{\Lambda W_\ell}^2
+\sum_{j=1}^5  \psh {g}{\partial_{x_j} W_\ell}^2
+ \psl {\fl g}{\fl Z_\ell^{+}}^2 + \psl {\fl g}{\fl Z_\ell^{-}}^2\right)\label{eq:2.30}.\end{align}
\end{lemma}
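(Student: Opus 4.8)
The plan is to reduce everything to the coercivity statement for the operator $L$ at $\ell = 0$, namely \eqref{tstY} and its localized version \eqref{stloc}, by undoing the Lorentz factor via the change of variables $x_1 \mapsto x_1/\sqrt{1-\ell^2}$. Recall from \eqref{eqWb} that $W_\ell(x) = W(x_1/\sqrt{1-\ell^2}, \overline x)$, and from \eqref{Tbbeta} (specialized to $\bell = \ell \mathbf e_1$) that the map $g \mapsto g_\ell$ intertwines the $\dot H^1_\ell$ inner product with a constant multiple of the standard $\dot H^1$ inner product. Moreover, under this change of variables the quadratic form $\psl{L_\ell g}{g}$ defined in \eqref{Hbeta} becomes (a constant multiple of) $\psl{L \tilde g}{\tilde g}$, where $\tilde g$ is the pullback of $g$; the kernel elements $\Lambda W_\ell$, $\partial_{x_j} W_\ell$ and the eigenfunction $Y_\ell$ are precisely the pullbacks of $\Lambda W$ (up to the $\partial_{x_1}$ chain-rule factor, absorbed into the $\dot H^1_\ell$ pairing), $\partial_{x_j} W$, $Y$.

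For part (i): I would first establish a purely scalar coercivity for $L_\ell$, i.e.
$$
\psl{L_\ell g}{g} \geq \mu \nohb{g}^2 - \frac{1}{\mu}\Big( \pshb{g}{\Lambda W_\ell}^2 + \sum_{j=1}^5 \pshb{g}{\partial_{x_j} W_\ell}^2 + \psl{g}{Y_\ell}^2 \Big),
$$
by transporting \eqref{tstY} through the change of variables described above. Then I pass from $\psl{L_\ell g}{g}$ to $\psl{H_\ell \fl g}{\fl g}$: by \eqref{Hbeta}, $\psl{H_\ell \fl g}{\fl g} = \psl{L_\ell g}{g} + \|\ell \partial_{x_1} g + h\|_{L^2}^2$, and the second term together with the first controls $\|\fl g\|_E^2 = \noh g^2 + \nol h^2$ since $\nohb{g}^2 \geq (1-\ell^2)\noh{g}^2$ and $\nol h^2 \lesssim \|\ell\partial_{x_1} g + h\|_{L^2}^2 + \nol{\partial_{x_1} g}^2$. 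Finally, the $\psl{g}{Y_\ell}$ orthogonality defect must be traded for the $\psl{\fl g}{\fl Z_\ell^\pm}$ defects: here I use that $\fl Z_\ell^+ + \fl Z_\ell^-$ and $\fl Z_\ell^+ - \fl Z_\ell^-$ are (up to the exponential weights $e^{\pm c x_1}$) essentially $Y_\ell$ in the second component, so an appropriate linear combination of $\psl{\fl g}{\fl Z_\ell^+}$ and $\psl{\fl g}{\fl Z_\ell^-}$ recovers control of $\psl{h}{Y_\ell e^{\cdots}}$; combined with the already-established control of $\psl{g}{Y_\ell}$-type quantities (or, cleaner, by observing $\fl g$ projected onto $\mathrm{span}\{\fl Z_\ell^+, \fl Z_\ell^-\}$ is the only missing direction), one closes the estimate. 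An alternative, cleaner route is to argue by a standard compactness/contradiction argument directly on $\psl{H_\ell \fl g}{\fl g}$: if the inequality fails there is a sequence with $\|\fl g_n\|_E = 1$, vanishing quadratic form and vanishing orthogonality defects; extract a weak limit $\fl g_\infty$, show it lies in the kernel of $H_\ell$ spanned by $\fl Z_\ell^\Lambda, \fl Z_\ell^{\nabla_j}$ (using that $H_\ell J$ has no other relevant spectrum near $0$ thanks to \eqref{Zpm}), deduce $\fl g_\infty = 0$ from the $\pshb{\cdot}{\Lambda W_\ell}$ and $\pshb{\cdot}{\partial_{x_j}W_\ell}$ conditions, and then rule out the unstable part using the $\fl Z_\ell^\pm$ conditions — this avoids tracking constants through the Lorentz change of variables.

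For part (ii), the localized version, I would mimic exactly the proof of \eqref{stloc} from \eqref{tstY}: multiply $g$ by the weight $\varphi$ defined in \eqref{phia}, use that $\||\nabla g|^2\varphi^2 - |\nabla(g\varphi)|^2|$ and $\int W_\ell^{4/3} g^2(1-\varphi^2)$ and the cross terms involving $1-\varphi$ are all $O(\delta(\alpha))\noh{g\varphi}^2$ by Hardy's inequality \eqref{z2} and the decay of $W_\ell$ and its derivatives (note $W_\ell$ has the same algebraic decay as $W$ up to the Lorentz stretch in $x_1$, so the weighted $L^\infty$ bounds used in \eqref{eq:qo}, \eqref{eq:qn} go through). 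The only new feature is the cross term $2\ell\int(\partial_{x_1}g)h\,\varphi^2$: after passing to $g\varphi$, $h\varphi$ one gets $2\ell\int \partial_{x_1}(g\varphi)(h\varphi)$ up to lower-order terms involving $\partial_{x_1}\varphi$ which are again $O(\delta(\alpha))$, and then $\int(|\nabla(g\varphi)|^2 - f'(W_\ell)(g\varphi)^2 + (h\varphi)^2 + 2\ell\partial_{x_1}(g\varphi)(h\varphi)) = \psl{H_\ell(g\varphi, h\varphi)}{(g\varphi,h\varphi)}$ by \eqref{Hbeta}, to which part (i) applies. Replacing the $\pshb{\cdot}{\cdot}$ orthogonality defects by the unlocalized $\psh{\cdot}{\cdot}$ ones costs only another $\delta(\alpha)$ by the same argument as in \eqref{eq:qo}.

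The main obstacle I anticipate is not the change-of-variables bookkeeping but the passage from scalar coercivity of $L_\ell$ to vectorial coercivity of $H_\ell$ with the \emph{exponentially-weighted} orthogonality directions $\fl Z_\ell^\pm$ rather than the naive $\psl{g}{Y_\ell}$: one must verify that the span of $\fl Z_\ell^+$ and $\fl Z_\ell^-$ genuinely captures the unique unstable direction of the flow generated by $H_\ell J$, which is exactly the content of the eigenvalue identity $-H_\ell J \fl Z_\ell^\pm = \pm\sqrt{\lambda_0}(1-\ell^2)^{1/2}\fl Z_\ell^\pm$ in \eqref{Zpm} together with the orthogonality relations \eqref{oZ}. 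For this reason I would favor the compactness argument for part (i), where these spectral facts enter transparently, and reserve the explicit change of variables for the localization in part (ii).
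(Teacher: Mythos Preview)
Your treatment of part (ii) is correct and matches the paper's proof exactly: apply part (i) to $(g\varphi, h\varphi)$, and absorb the commutator terms $\int |\nabla(g\varphi)|^2 - \int|\nabla g|^2\varphi^2$, $\int(\partial_{x_1}(g\varphi))(h\varphi) - \int(\partial_{x_1}g)h\varphi^2$, $\int W_\ell^{4/3}g^2(1-\varphi^2)$, and the $(1-\varphi)$ contributions to the orthogonality defects, all of which are $O(\delta(\alpha))\noe{\fl g\varphi}^2$ by Hardy.

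For part (i), however, there is a real gap. You correctly identify the crux --- passing from the scalar $Y_\ell$-orthogonality to the vectorial $\fl Z_\ell^\pm$-orthogonalities with exponential weights $e^{\pm c\ell x_1}$ --- but neither of your two routes resolves it. In your first route, ``an appropriate linear combination of $\psl{\fl g}{\fl Z_\ell^+}$ and $\psl{\fl g}{\fl Z_\ell^-}$ recovers control of $\psl{h}{Y_\ell e^{\cdots}}$'' does not yield control of $\psl{g}{Y_\ell}$: for $\ell\neq 0$ the weights $e^{\pm c\ell x_1}$ do not cancel, and the first components of $\fl Z_\ell^\pm$ mix $Y_\ell$ with $\partial_{x_1}Y_\ell$, so no linear combination reduces to the unweighted $Y_\ell$-pairing. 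In your compactness sketch, the step ``show $\fl g_\infty$ lies in the kernel of $H_\ell$'' is not justified: the quadratic form is indefinite, so $\psl{H_\ell \fl g_n}{\fl g_n}\to 0$ together with weak convergence gives only $\psl{H_\ell \fl g_\infty}{\fl g_\infty}\leq 0$, not that $\fl g_\infty$ is a null vector; and ``rule out the unstable part using the $\fl Z_\ell^\pm$ conditions'' presupposes exactly what you are trying to prove, namely that those two conditions are transverse to the single negative direction.

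The paper supplies the missing idea: it introduces the antecedents $\fl z_\ell^\pm$ from \eqref{eq:pz}, which satisfy $H_\ell\fl z_\ell^\pm = \fl Z_\ell^\pm$ and, crucially, $\psl{H_\ell\fl z_\ell^\pm}{\fl z_\ell^\pm}=0$. One then decomposes $\fl g = a\,\fl Z_\ell^W + \fl g^\perp$ and $\fl z_\ell^\pm = a^\pm \fl Z_\ell^W + \fl z_\ell^{\pm,\perp}$ along the explicit negative direction $\fl Z_\ell^W$ (recall $\psl{H_\ell\fl Z_\ell^W}{\fl Z_\ell^W}<0$), so that the orthogonalities $\psl{\fl g}{\fl Z_\ell^\pm}=\psl{H_\ell\fl g}{\fl z_\ell^\pm}=0$ become algebraic relations linking $a$ to $\psl{H_\ell\fl g^\perp}{\fl z_\ell^{\pm,\perp}}$. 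These relations let one write $\psl{H_\ell\fl g}{\fl g}$ as $\psl{H_\ell\fl g^\perp}{\fl g^\perp}$ minus a product of normalized Cauchy--Schwarz ratios; since $H_\ell$ is positive definite on the $\perp$-subspace (this is where \eqref{tst} enters after change of variables) and $\fl z_\ell^{+,\perp}$, $\fl z_\ell^{-,\perp}$ are \emph{not} proportional (they have different exponential behaviour at infinity when $\ell\neq 0$), the product is bounded by $A\,\psl{H_\ell\fl g^\perp}{\fl g^\perp}$ with $A<1$, and coercivity follows. This non-proportionality, not any spectral fact about $H_\ell J$, is what makes the two $\fl Z_\ell^\pm$ conditions genuinely two-dimensional and hence able to kill the one-dimensional negative subspace.
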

 
\begin{proof}
 \emph{Proof of \eqref{eq:22}.} 
By a standard argument, it is equivalent to prove
\begin{align}
 \pshb{g}{\Lambda W_\ell}= \pshb {g}{\partial_{x_j}W_\ell}=\psl {\fl g}{\fl Z_\ell^{\pm}}=0
 \quad \Rightarrow \quad 
 \psl {H_\ell \fl g}{\fl g} \geq \mu  \|\fl g\|_E^2.\label{eq:22b}
\end{align}
Note that the proof of \eqref{eq:22b} is largely inspired by Proposition 2 in \cite{CMkg},
Lemma 5.1 in \cite{DMnls}, and  Proposition 5.5 in \cite{DMwave}.

\medskip

\emph{Case $\ell=0$.} Note that in this case
$ \fl Z_0^{\pm} = \left(\begin{array}{c}\pm \sqrt{\lambda_0} Y \\ Y\end{array}\right)$, and  $g$ as in \eqref{eq:22b} thus satisfies  the orthogonality conditions 
$ \psh g {\Lambda W} =\psh g { \partial_{x_j} W} =  \psl g{Y}=0$. 
Then, \eqref{eq:22b} follows from \eqref{tstY}.

\medskip

\emph{Case $\ell\neq0$.}
Note that \eqref{eq:22} is thus equivalent to
\begin{align}\label{2.11}
\pshb { g}{\Lambda W_\ell }=
\pshb { g}{\partial_{x_j}W_\ell }=  \psl {H_{\ell} \fl g}{\fl z_\ell^{\pm}}
=0\quad \Rightarrow\quad
\psl {H_\ell \fl g}{\fl g}  \geq \mu_\ell \|\fl g\|_E^2.
\end{align}
We decompose $g$ and $\fl z_\ell^\pm$ as follows
\begin{equation}\label{o3}
\fl g = a \fl Z_\ell^W + \fl g^\perp,\quad
\fl z_\ell^\pm = a^{\pm} \fl Z_\ell^W + \fl z_{\ell}^{\pm,\perp},
\quad 
\fl g^\perp = \left( \begin{array}{c}
                      g^\perp \\h^\perp
                     \end{array}
\right),\quad
\fl z_\ell^{\pm\perp} = \left( \begin{array}{c}
                       z_{\ell,1}^{\pm\perp} \\  z_{\ell,2}^{\pm\perp}
                     \end{array}
\right),
\end{equation}
where $a$ and $a^{\pm}$ are chosen so that
\begin{equation}\label{o1}
\pshb {  g^\perp}{W_\ell}=
\pshb {  z_{\ell,1}^{\pm,\perp}}{W_\ell}=0.
\end{equation}
We still have
$$
\pshb { g^\perp}{\Lambda W_\ell }=
\pshb { g^\perp}{\partial_{x_j}W_\ell }=0, \quad
\pshb {\fl z_{\ell,1}^{\pm,\perp}}{\Lambda W_\ell }=
\pshb {\fl z_{\ell,1}^{\pm,\perp}}{\partial_{x_j}W_\ell }=0.
$$
Note that since (see \eqref{ZW} and \eqref{eqWb})
$$H_\ell {\fl Z_\ell^W} =
- \frac 43 \left(\begin{array}{c} W_\ell^{\frac 73} \\0 \end{array}\right)
= \frac 43 \left(\begin{array}{c} (1-\ell^2) \partial_{x_1}^2 W_\ell +\overline\Delta W_\ell\\0 \end{array}\right),
$$
\eqref{o1} is equivalent to
\begin{equation}\label{o2}
\psl {H_\ell \fl g^\perp}{\fl Z_\ell^W}=
\psl {H_\ell \fl z_{\ell}^{\pm,\perp}}{\fl Z_\ell^W}=0.
\end{equation}

The decompositions \eqref{o3}   being orthogonal  with respect to $\psl {H_\ell.}.$, we have
\begin{align}
 \psl {H_\ell \fl g}{\fl g} &=  a^2 \psl {H_\ell\fl Z_\ell^W}{\fl Z_\ell^W}
 + \psl {H_\ell\fl g^\perp}{\fl g^\perp},\nonumber\\
 0 =  \psl {H_\ell \fl z_\ell^\pm }{\fl z_\ell^\pm} &=  (a^\pm)^2 \psl {H_\ell\fl Z_\ell^W}{\fl Z_\ell^W}
 + \psl {H_\ell  \fl z_{\ell}^{\pm,\perp}}{ \fl z_{\ell}^{\pm,\perp}},\nonumber\\
 0 = \psl {H_{\ell} \fl g}{\fl z_\ell^{\pm}} & = a a^\pm \psl {H_\ell\fl Z_\ell^W}{\fl Z_\ell^W}  + \psl {H_\ell  \fl g^\perp}{ \fl z_{\ell}^{\pm,\perp}},\label{pisc}
\end{align}
which imply (recall that $\psl {H_\ell\fl Z_\ell^W}{\fl Z_\ell^W}<0$), from \eqref{Zpm},
\begin{equation}
\label{fin1}
\psl {H_\ell \fl g}{\fl g}  = 
- \frac {\psl {H_\ell  \fl g^\perp}{ \fl z_{\ell}^{-,\perp}}\psl {H_\ell  \fl g^\perp}{ \fl z_{\ell}^{+,\perp}}}{\sqrt{\psl {H_\ell \fl z_{\ell}^{-,\perp} }{\fl z_{\ell}^{-,\perp}}\psl {H_\ell \fl z_{\ell}^{+,\perp} }{\fl z_{\ell}^{+,\perp}}}}
 + \psl {H_\ell\fl g^\perp}{\fl g^\perp}.
\end{equation}
Let
$$
A=\sup_{\fl\omega\in{\rm Span}(\fl z_\ell^{+,\perp},\fl z_\ell^{-,\perp})}
\left| \frac {\psl {H_\ell  \fl \omega}{ \fl z_{\ell}^{-,\perp}} }
{\sqrt{\psl {H_\ell \fl z_{\ell}^{-,\perp} }{\fl z_{\ell}^{-,\perp}}\psl {H_\ell\fl  \omega}{\fl  \omega}}}
\frac {\psl {H_\ell  \fl  \omega}{ \fl z_{\ell}^{+,\perp}} }
{\sqrt{\psl {H_\ell \fl z_{\ell}^{+,\perp} }{\fl z_{\ell}^{+,\perp}}\psl {H_\ell\fl  \omega}{\fl  \omega}}}
  \right|
$$
Since $(H_\ell.,.)$ is positive definite on ${\rm Span}(\Delta W_\ell,\Delta \Lambda W_\ell,\Delta \partial_{x_j} W_\ell)^\perp$, applying Cauchy-Schwarz inequality to each of the term of the product above, we find $A\leq 1$. Moreover, $A=1$ would imply that $ \fl z_{\ell}^{-,\perp}$ and $ \fl z_{\ell}^{+,\perp}$ are proportional, which is clearly not true for $\ell\neq 0$ (for example, due to different behavior at $\infty$ of $\fl Z_k^{\pm}$). Thus, $A<1$. As a consequence, we also obtain  that for all 
$\fl \omega \in  {\rm Span}(\Delta W_\ell,\Delta \Lambda W_\ell,\Delta \partial_{x_j} W_\ell)^\perp$,
$$
\left| \frac {\psl {H_\ell  \fl \omega}{ \fl z_{\ell}^{-,\perp}} }
{\sqrt{\psl {H_\ell \fl z_{\ell}^{-,\perp} }{\fl z_{\ell}^{-,\perp}}\psl {H_\ell\fl  \omega}{\fl  \omega}}}
\frac {\psl {H_\ell  \fl  \omega}{ \fl z_{\ell}^{+,\perp}} }
{\sqrt{\psl {H_\ell \fl z_{\ell}^{+,\perp} }{\fl z_{\ell}^{+,\perp}}\psl {H_\ell\fl  \omega}{\fl  \omega}}}
  \right|\leq A (H_\ell \fl\omega,\fl\omega)_{L^2}.
$$
Thus, by \eqref{fin1} and then \eqref{tst} (after change of variables),
$$
\psl {H_\ell \fl g}{\fl g}   \geq (1-A) \psl {H_\ell\fl g^\perp}{\fl g^\perp}
\geq c \noe{\fl g^\perp}^2.
$$
The result then follows from $|a|\lesssim \|\fl g^\perp\|_E$ from \eqref{pisc}.

\medskip

Proof of \eqref{eq:2.30}. First, we apply \eqref{eq:22} on $\fl g \varphi$:
\begin{align*}
& \psl {H_\ell (\fl g \varphi)}{\fl g \varphi}  \geq \mu  \noe{\fl g \varphi}^2 \\ &
- \frac 1{\mu}\left( ({\fl g \varphi},{\Lambda W_\ell})_{\dot H^1_\ell}^2
+\sum_{j=1}^5 ({\fl g \varphi},{\partial_{x_j} W_\ell})_{\dot H^1_\ell}^2
+ \psl {\fl g \varphi}{\fl Z_\ell^{+}}^2 + \psl {\fl g \varphi}{\fl Z_\ell^{-}}^2\right).
\end{align*}
Recall that
\begin{align*}
 \psl {H_\ell (\fl g \varphi)}{\fl g \varphi} & =\int |\nabla (g \varphi)|^2   - \frac 73 \int W_\ell^{\frac 43} g^2 \varphi^2 
 + 2 \ell \int \partial_{x_1} (g \varphi) (h \varphi)
 +\int h^2 \varphi^2.
\end{align*}
Note that $\partial_{x_1} \varphi = \frac {-2 \alpha x_1}{1+|x|^2} \varphi$ and so
\begin{align*}
&\left| \int  \partial_{x_1} (g \varphi) (g \varphi)
- \int  (\partial_{x_1}  g)  h \varphi^2 \right|
= \left| \int gh (\partial_{x_1} \varphi) \varphi\right| \leq
 C \alpha \int |g| |h| \frac {\varphi^2}{\langle x\rangle} 
 \\& \leq C \alpha \left( \int \frac {(g\varphi)^2}{\langle x\rangle^2}\right)^{\frac 12} \left( \int |h|^2 \varphi^2\right)^{\frac 12} \leq C \alpha \int |\nabla (g\varphi)|^2 + C \alpha \int |h|^2 \varphi^2.
\end{align*}
Thus, using \eqref{eq:gd},
\begin{align*}
&\left| \psl {H_\ell (\fl g \varphi)}{\fl g \varphi}
 -  \int \left(|  \nabla g|^2  -   f'(W_\ell) g^2  +    h^2  
 + 2 \ell   (\partial_{x_1} g) h  \right)  \varphi^2\right| \leq \delta(\alpha) \noe {\fl g \varphi}^2.
\end{align*}
To complete the proof, we just notice that as in \eqref{eq:qo}
\begin{equation}\label{eq:qo2}
\left| \psl {\fl g (1-\varphi)}{\fl Z_\ell^{\pm}}\right|
 \leq \delta (\alpha) \noe{\fl g\varphi},
 \end{equation}
 and similarly for the other scalar products appearing in \eqref{eq:2.30}, and as
 in \eqref{eq:qn},
 \begin{equation}
\label{eq:qnb}
\int W_\ell^{\frac 43} g^2 (1-\varphi^2) \lesssim \delta(\alpha) \noh {g\varphi}^2 .
\end{equation}
Combining these estimates, we obtain \eqref{eq:2.30}, for $\alpha$ small enough.
\end{proof}

\subsection{Energy linearization around $W_\bell$}
We only define some  notation generalizing the previous section.
For $\bell \in \R^5$ such that 
 $|\boldsymbol{\ell}|< 1$, $W_{\bell}$ defined in \eqref{defWbb} solves
\begin{equation}\label{eqWbb}
  \Delta W_\bell-\bell\cdot\nabla(\bell\cdot\nabla W_\bell)+W_\bell^{\frac 73}=0.
  \end{equation}
The following operators are related to the linearization of the energy around $W_\bell$
$$ L_{\bell}   = -\Delta -\bell\cdot\nabla(\bell\cdot\nabla)  - f'(W_\bell)  ,\quad H_\bell = \left(\begin{array}{cc}  -\Delta - f'(W_\bell) & -\bell\cdot \nabla \\ \bell \cdot \nabla & {\rm Id}\end{array}\right).$$
Set  
\begin{align*}
\fl Z_{\bell}^\Lambda = \left(\begin{array}{c} \Lambda W_{\bell} \\ - {\bell}\cdot \nabla(\Lambda W_{\bell})\end{array}\right),\quad
\fl Z_{\bell}^{\nabla_j} = \left(\begin{array}{c} \partial_{x_j} W_{\bell} \\ - {\bell}\cdot \nabla (\partial_{x} W_{\bell})\end{array}\right),\quad
\fl Z_{\bell}^W = \left(\begin{array}{c} W_{\bell} \\ - {\bell}\cdot \nabla W_{\bell} \end{array}\right),
\end{align*}
$$
Y_{\bell}=Y\left(\left(\frac{1}{\sqrt{1-|\boldsymbol{\ell}|^2}}-1\right) \frac{\boldsymbol{\ell}(\boldsymbol{\ell}\cdot x)}{|\boldsymbol\ell|^2} +x\right),
\quad 
 \fl Z_{\bell}^\pm = \left(\begin{array}{c} \left({\bell} \cdot \nabla  Y_{\bell} 
\pm \frac {\sqrt{\lambda_0}}{\sqrt{1-|\bell|^2}} Y_{\bell}\right) e^{\pm \frac {  \sqrt{\lambda_0}}{\sqrt{1-|{\bell}|^2}} {\bell} \cdot x} \\ 
Y_{\bell}  e^{\pm \frac { \sqrt{\lambda_0}}{\sqrt{1-|{\bell}|^2}} {\bell} \cdot  x } \end{array}\right).
$$Note from \eqref{oZ} and \eqref{Zpm},
\begin{equation}\label{oZbb}
\psl { \fl Z_\bell^\Lambda}{\fl Z_\bell^\pm}=
\psl { \fl Z_\bell^{\nabla_j}}{\fl Z_\bell^\pm}=0,
\end{equation}
\begin{equation}\label{Zbbeta}
	- H_\bell J \fl Z_\bell^{\pm} = \pm \sqrt{\lambda_0} (1-|\bell|^2)^{\frac 12} \fl Z_\bell^{\pm}.
\end{equation}
\section{Decomposition around the sum of $K$ solitons}

We prove in this section a general decomposition around $K$ solitons.
Let $K\geq 1$ and for any $k\in \{1,\ldots,K\}$,  let $\lambda^\infty_k>0$, $\mathbf{y}_k^\infty \in \R^5$,
 $\bell_k\in \R^5$, $|\bell_k|<1$ with $\bell_{k'}\neq \bell_k$ for $k'\neq k$.
 
First, for $\fl G= (G,H)$, set
$$
(\theta_k^\infty G)(t,x) = \frac {\iota_k}{(\lambda_k^\infty)^{3/2}} G\left(\frac {x -\bell_k t - \mathbf{y}_k^\infty}{\lambda_k^\infty}\right),
\quad 
 \fl \theta_k^\infty \fl  G 
= \left(
  \begin{array}{c}\theta_k^\infty G \\[.2cm] 
  \displaystyle \frac {\theta_k^\infty} {\lambda_k^\infty}  H \end{array}
  \right).
$$
In particular, set
$$
  W_k^\infty =\theta_k^\infty W_{\bell_k}, \quad
\fl W_k^\infty  = \left(\begin{array}{c} \theta_k^\infty W_{\bell_k} \\[.2cm]
 \displaystyle -\frac  {\bell_k} {\lambda_k^\infty} \cdot{\theta_k^\infty}  ( \nabla W_{\bell_k}) \end{array}\right).
$$

Second,  for $C^1$ functions $\lambda_k(t)>0$, $\mathbf{y}_k(t)\in \R^5$  to be chosen, let
\begin{equation}\label{thetak}
(\theta_k G)(t,x) = \frac {\iota_k}{\lambda_k^{3/2}(t)} G\left(\frac {x -\bell_k t - \mathbf{y}_k(t)}{\lambda_k(t)}\right),
\quad
\fl \theta_k \fl  G  = \left(\begin{array}{c}\theta_k G \\[.2cm] \displaystyle \frac {\theta_k} {\lambda_k }  H \end{array}\right),\quad
\fl {\tilde \theta}_k \fl G  = \left(\begin{array}{c}\displaystyle \frac {\theta_k}{\lambda_k} G\\[.4cm]  \theta_k H \end{array}\right).
\end{equation}
In particular, set
\begin{equation}\label{defWk}
W_k  = \theta_k W_{\bell_k} ,\quad
\fl W_k  = \left(\begin{array}{c} \theta_k W_{\bell_k} \\[.2cm] \displaystyle -\frac  {\bell_k} {\lambda_k} \cdot{\theta_k}  ( \nabla W_{\bell_k}) \end{array}\right).
\end{equation}
In what follows $\sum_{k=1}^K$ is often simply denoted  by $\sum_k$.

\begin{lemma}[Properties of the decomposition]\label{le:4} 
There exist $T_0\gg 1$ and $0<\delta_0\ll 1$ such that if $u(t)$ is a solution of \eqref{wave} on
$[T_1,T_2]$, where $T_0\leq T_1<T_2$,  such that
\begin{equation}\label{hyp:4}
	\forall t\in [T_1,T_2],\quad
	\left\|\fl u(t)-\sum_k \fl W_k^\infty(t)\right\|_{\dot H^1\times L^2}
	\leq \delta_0,
\end{equation}
then there exist $C^1$  functions $\lambda_k>0$, $\mathbf{y}_k$ on $[T_1,T_2]$ such that, 
$\fl \e(t)$ being defined by
\begin{equation}
\label{eps2}
 \fl \e=\left(\begin{array}{c}\e \\ \eta \end{array}\right),\quad 
\fl u = \left(\begin{array}{c} u \\u_t\end{array}\right) =
\sum_k\fl W_k +  \fl \e, 
\end{equation}
the following hold on $[T_1,T_2]$.\\
\emph{(i) First properties of the decomposition.}
\begin{equation}
\label{ortho}
\pshbbk {\e}{\theta_k (\Lambda W_{\bell_k})}=
\pshbbk {\e}{\theta_k (\partial_{x_j} W_{\bell_k})}= 0,
\end{equation}
\begin{equation}\label{bounds}
  |\lambda_k(t)-\lambda_k^{\infty}|
  +|\mathbf{y}_k(t)-\mathbf{y}_k^\infty|
  +\|\fl \e\|_{E}
  \lesssim \left\|\fl u(t)-\sum_k \fl W_k^\infty(t)\right\|_{\dot H^1\times L^2}
\end{equation}
\emph{(ii)  Equation of $\fl \e$.}
\begin{equation}\label{syst_e}\left\{\begin{aligned}
\e_t  & =  \eta  + {\rm Mod}_\e\\
	\eta_t & 
	=  \Delta \e +f\left(   \SWk   + \e\right) 
-   f\left(\SWk\right)   + R_{W} + {\rm Mod}_\eta
\end{aligned}\right.\end{equation} 
where
\begin{align}
R_{W} &=f\left(\SWk\right)   - \sum_k f(W_k),
\label{RW}\end{align}
\begin{align}
{\rm Mod}_\e &=  \sum_k\frac{\dot\lambda_k}{\lambda_k} \theta_k(\Lambda W_{\bell_k})
+   \sum_k \frac{{\dot {\mathbf{y}}}_k}{\lambda_k} \cdot \theta_k(\nabla W_{\bell_k})\label{Mod_e}\\
{\rm Mod}_\eta &= 
-  \sum_k \frac{\dot \lambda_k} {\lambda_k^2} \bell_k\cdot\theta_k(\nabla\Lambda W_{\bell_k}) 
-  \sum_k \frac{{\dot {\mathbf{y}}}_k} {\lambda_k^2} \cdot \theta_k(\nabla ( \bell_k \cdot \nabla W_{\bell_k})).
\label{Mod_eta}
\end{align}
\emph{(iii)  Parameters equations.}
\begin{equation}
\label{le:p}
\sum_k |\dot \lambda_k(t)|+|\dot {\mathbf{y}}_k(t)|
\lesssim \noe {\fl\e(t)}  .
\end{equation}
\emph{(iv) Unstable directions.} Let
\begin{equation}
\label{le:dz}
z_k^{\pm} (t)  = \psl {\fl \e(t)}{\fl {\tilde\theta}_k \fl Z_{\bell_k}^{\pm}}.
\end{equation}
Then,
\begin{equation}
\label{le:z}
\left| \frac d{dt} z_k^{\pm}(t) \mp \frac{\sqrt{\lambda_0}}{\lambda_k}(1-|\bell_k|^2)^{\frac 12}z_k^{\pm} (t)\right|
\lesssim \noe {\fl\e(t)}^2 + \frac {\noe {\fl\e(t)}}t + \frac 1 {t^3}.
\end{equation}
\end{lemma}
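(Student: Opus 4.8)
\textbf{Proof plan for Lemma~\ref{le:4}.}

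The plan is to obtain parts (i)--(iii) by a standard modulation/implicit function theorem argument, and then to derive the key dynamical estimate \eqref{le:z} in (iv) by differentiating $z_k^\pm$ along the flow and carefully isolating the three error mechanisms appearing on the right-hand side. First I would set up the decomposition: given the closeness hypothesis \eqref{hyp:4}, one introduces the map $(\lambda_k,\mathbf{y}_k)_k \mapsto \big(\pshbbk{u-\sum_{k'}W_{k'}}{\theta_k(\Lambda W_{\bell_k})},\ \pshbbk{u-\sum_{k'}W_{k'}}{\theta_k(\partial_{x_j}W_{\bell_k})}\big)_{k,j}$, which vanishes at $(\lambda_k^\infty,\mathbf{y}_k^\infty)_k$ when $u=\sum_k W_k^\infty$; its Jacobian there is, up to the nondegeneracy $\pshbbk{\Lambda W_{\bell_k}}{\Lambda W_{\bell_k}}\neq 0$ etc. and the exponential smallness of the cross terms between distinct solitons (since $|\bell_k - \bell_{k'}|>0$ forces spatial separation at rate $\sim t$ for $t$ large), a small perturbation of an invertible block-diagonal matrix. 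Hence for $T_0$ large enough and $\delta_0$ small the implicit function theorem yields unique $C^1$ functions $\lambda_k,\mathbf{y}_k$ satisfying \eqref{ortho}, and the Lipschitz bound in the IFT gives \eqref{bounds}.

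Next, for part (ii), \eqref{syst_e} is just the definition of $\fl\e = \fl u - \sum_k \fl W_k$ combined with the equation \eqref{wave} for $u$ and the fact that each $\theta_k W_{\bell_k}(\cdot - \bell_k t)$ solves \eqref{wave} when $\lambda_k,\mathbf{y}_k$ are frozen; the terms ${\rm Mod}_\e,{\rm Mod}_\eta$ collect exactly the contributions of $\dot\lambda_k$ and $\dot{\mathbf y}_k$ coming from $\partial_t(\theta_k W_{\bell_k})$, and $R_W$ is the nonlinear interaction term \eqref{RW}. For part (iii), I differentiate the orthogonality conditions \eqref{ortho} in $t$, substitute $\e_t$ from \eqref{syst_e}, and observe that the modulation terms ${\rm Mod}_\e$ pair with $\theta_k(\Lambda W_{\bell_k})$, $\theta_k(\partial_{x_j}W_{\bell_k})$ to produce, up to exponentially small soliton-interaction errors, an invertible linear system in $(\dot\lambda_k,\dot{\mathbf y}_k)$ whose right-hand side is $O(\noe{\fl\e})$ — using that the linear-in-$\e$ terms $\Delta\e + f'(\sum W_{k'})\e$ pair against $L_{\bell_k}$-kernel elements with size $\lesssim \noe{\fl\e}$ after integrating by parts, and the quadratic-in-$\e$ remainder is $O(\noe{\fl\e}^2)$. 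Inverting gives \eqref{le:p}.

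The heart of the lemma is \eqref{le:z}, and this is where I expect the main work. Writing $z_k^\pm = \psl{\fl\e}{\fl{\tilde\theta}_k\fl Z_{\bell_k}^\pm}$, I differentiate: $\frac{d}{dt}z_k^\pm = \psl{\partial_t\fl\e}{\fl{\tilde\theta}_k\fl Z_{\bell_k}^\pm} + \psl{\fl\e}{\partial_t(\fl{\tilde\theta}_k\fl Z_{\bell_k}^\pm)}$. In the first term I substitute the linear part of \eqref{syst_e}, namely $J H_{\bell_k}$ acting (after rescaling by $\theta_k$) on $\fl\e$, and use the eigenrelation \eqref{Zbbeta}, $-H_{\bell_k}J\fl Z_{\bell_k}^\pm = \pm\sqrt{\lambda_0}(1-|\bell_k|^2)^{1/2}\fl Z_{\bell_k}^\pm$, together with the self-adjointness of $H_{\bell_k}$, to extract the main term $\pm\frac{\sqrt{\lambda_0}}{\lambda_k}(1-|\bell_k|^2)^{1/2}z_k^\pm$. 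What remains must be shown to be $\lesssim \noe{\fl\e}^2 + \noe{\fl\e}/t + t^{-3}$: the genuinely nonlinear part $f(\sum W_{k'}+\e) - f(\sum W_{k'}) - f'(\sum W_{k'})\e$ contributes $O(\noe{\fl\e}^2)$ by the Sobolev/Hardy bounds \eqref{z1}--\eqref{z2} (here one uses that $\fl Z_{\bell_k}^\pm$ has a Gaussian-type profile, so the weighted pairing converges); the soliton-interaction remainder $R_W$ together with the mismatch between $f'(\sum W_{k'})$ and $f'(W_k)$ near the $k$-th soliton gives the $t^{-3}$ term, because two solitons with $\bell_k\neq\bell_{k'}$ are separated by a distance $\sim t$ and $W$ decays like $|x|^{-3}$ in dimension $5$; the modulation terms ${\rm Mod}_\e, {\rm Mod}_\eta$ paired against $\fl{\tilde\theta}_k\fl Z_{\bell_k}^\pm$ are controlled by $(|\dot\lambda_k| + |\dot{\mathbf y}_k|)\cdot(\text{small})$ — here the crucial point is that the kernel elements $\Lambda W_{\bell_k},\partial_{x_j}W_{\bell_k}$ are $L^2$-orthogonal to $\fl Z_{\bell_k}^\pm$ by \eqref{oZbb}, so the leading modulation contribution vanishes and what survives is $\lesssim (|\dot\lambda_k|+|\dot{\mathbf y}_k|)\noe{\fl\e} \lesssim \noe{\fl\e}^2$ after \eqref{le:p}, while the part of ${\rm Mod}$ not orthogonal to $\fl Z_{\bell_k}^\pm$ after accounting for the time-dependence of $\theta_k$ gives a term of size $(|\dot\lambda_k|+|\dot{\mathbf y}_k|)/\lambda_k$, again absorbed. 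Finally $\psl{\fl\e}{\partial_t(\fl{\tilde\theta}_k\fl Z_{\bell_k}^\pm)}$ is handled by noting $\partial_t(\fl{\tilde\theta}_k\fl Z_{\bell_k}^\pm)$ splits into the transport part $-\bell_k\cdot\nabla$ (which reassembles into $JH_{\bell_k}$-type structure and is already counted) plus terms proportional to $\dot\lambda_k,\dot{\mathbf y}_k$ (size $\noe{\fl\e}^2$) plus a term of size $\noe{\fl\e}/t$ coming from the fact that $\fl Z_{\bell_k}^\pm$ involves the non-scale-invariant exponential weight $e^{\pm c\,\bell_k\cdot x}$ evaluated at the moving, rescaled variable. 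Collecting all contributions yields \eqref{le:z}. The main obstacle is the bookkeeping in this last estimate: one must track precisely which pieces produce $\noe{\fl\e}^2$, which produce $\noe{\fl\e}/t$, and which produce $t^{-3}$, and in particular verify that no term of size $\noe{\fl\e}/t^2$ or worse than $t^{-3}$ appears — this relies on the exponential localization of $\fl Z_{\bell_k}^\pm$ against the algebraically-decaying interaction terms and on the orthogonality \eqref{oZbb}.
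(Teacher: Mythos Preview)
Your overall architecture matches the paper's, but two points need correction.

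First, the cross-soliton terms are \emph{not} exponentially small: $W(x)\sim |x|^{-3}$, so the off-diagonal blocks in the Jacobian of your modulation map and the cross-terms in the $\dot\lambda_k,\dot{\mathbf y}_k$ system are only $O(t^{-3})$. This algebraic decay is exactly the difficulty the paper is built around; the implicit function theorem and the parameter estimates still go through, but you should invoke an estimate of the type $\int |W_1|^{r_1}|W_2|^{r_2}\lesssim t^{-3r_2}$ (the paper proves this as a separate claim) rather than claiming exponential smallness.

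Second, and more seriously, you have misidentified the source of the $\noe{\fl\e}/t$ term in \eqref{le:z}. It does \emph{not} come from the exponential weight in $\fl Z_{\bell_k}^\pm$: after you split off the transport piece $-\frac{\bell_k}{\lambda_k}\cdot\nabla$ from $\partial_t(\fl{\tilde\theta}_k\fl Z_{\bell_k}^\pm)$ and combine it with $\fl{\mathcal L}\fl\e$ to form $-H_{\bell_k}J$, what remains in $\partial_t(\fl{\tilde\theta}_k\fl Z_{\bell_k}^\pm)$ is purely modulation-driven and hence $O(\noe{\fl\e}^2)$ by \eqref{le:p}. The $\noe{\fl\e}/t$ contribution actually comes from the linear-in-$\e$ interaction remainder
\[
R_{\e,1}=\Big(f'\big(\textstyle\sum_{k'}W_{k'}\big)-\sum_{k'}f'(W_{k'})\Big)\e,
\]
paired against $\fl{\tilde\theta}_k\fl Z_{\bell_k}^\pm$. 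You lumped this ``potential mismatch'' into the $t^{-3}$ bucket together with $R_W$, but the extra factor of $\e$ changes the count: since $|f'(\sum W_{k'})-\sum f'(W_{k'})|\lesssim \sum_{k\neq k'}|W_k|^{1/3}|W_{k'}|$, Cauchy--Schwarz and the algebraic interaction estimate give $|\psl{R_{\e,1}}{\theta_k Z_{\bell_k,2}^\pm}|\lesssim \noe{\fl\e}/t$, not $t^{-3}$. The pure source term $R_W$ (no $\e$) is what yields $t^{-3}$. Getting this attribution right is exactly the bookkeeping you flagged as the main obstacle.
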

\begin{proof}
{\bf Step 1.} Decomposition. Let $T_0\gg 1$, fix $t\geq T_0$ and assume that \eqref{hyp:4} holds for $t$.
Let 
$$
\Gamma^\infty=(\lambda_k^\infty,\mathbf{y}_k^\infty)_{k\in \{1,\ldots,K\}},\quad
\Gamma=(\lambda_k,\mathbf{y}_k)_{k\in \{1,\ldots,K\}}\in ((0,+\infty)\times\R^5)^K,
$$
where $\lambda_k$ and $\mathbf{y}_k$ are to be found (depending on $t$).
Consider the map
\begin{align*}
\Phi : \dot H^1 & \times ((0,+\infty)\times\R^5)^K \to \R^{6K}\\
(\omega,\Gamma) & \mapsto \left(
(\omega+\sum_{k'} W_{k'}^\infty -\sum_{k'} \theta_{k'} W_{\ell_{k'}},\theta_k(\Lambda W_{\bell_k}))_{\dot H^1_{\bell_k}},\right.\\
& \qquad (\omega+\sum_{k'} W_{k'}^\infty -\sum_{k'} \theta_{k'} W_{\ell_{k'}},\theta_k(\partial_{x_1} W_{\bell_k}))_{\dot H^1_{\bell_k}},\\
& \left.\qquad \ldots,
(\omega+\sum_{k'} W_{k'}^\infty -\sum_{k'} \theta_{k'} W_{\ell_{k'}},\theta_k(\partial_{x_5} W_{\bell_k}))_{\dot H^1_{\bell_k}}
\right)_{k\in \{1,\ldots,K\}},
\end{align*}
where $\theta_k$ is defined in \eqref{thetak}.
By explicit computations, we have
\begin{align*}
&\left(d_\Gamma \Phi (0,\Gamma^\infty)\cdot\tilde \Gamma\right)_k 
\\
& =\left(
\sum_{k'} \frac{\tilde\lambda_{k'}}{\lambda_{k'}^\infty}
\left(\theta_{k'}^\infty(\Lambda W_{\bell_{k'}}),\theta_{k}^\infty(\Lambda W_{\bell_{k}})\right)_{\dot H^1_{\bell_k}}
+\sum_{k'} \frac{\tilde {\mathbf{y}}_{k'}}{\lambda_{k'}^\infty}\cdot
\left(\theta_{k'}^\infty(\nabla W_{\bell_{k'}}),\theta_{k}^\infty(\Lambda W_{\bell_{k}})\right)_{\dot H^1_{\bell_k}},
\right.\\
& \sum_{k'} \frac{\tilde\lambda_{k'}}{\lambda_{k'}^\infty}
\left(\theta_{k'}^\infty(\Lambda W_{\bell_{k'}}),\theta_{k}^\infty(\partial_{x_1} W_{\bell_{k}})\right)_{\dot H^1_{\bell_k}} +\sum_{k'} \frac{\tilde {\mathbf{y}}_{k'}}{\lambda_{k'}^\infty}\cdot
\left(\theta_{k'}^\infty(\nabla W_{\bell_{k'}}),\theta_{k}^\infty(\partial_{x_1}W_{\bell_{k}})\right)_{\dot H^1_{\bell_k}},
\ldots,
\\
&\sum_{k'} \frac{\tilde\lambda_{k'}}{\lambda_{k'}^\infty}
\left(\theta_{k'}^\infty(\Lambda W_{\bell_{k'}}),\theta_{k}^\infty(\partial_{x_5} W_{\bell_{k}})\right)_{\dot H^1_{\bell_k}}
+\left. \sum_{k'} \frac{\tilde {\mathbf{y}}_{k'}}{\lambda_{k'}^\infty}\cdot
\left(\theta_{k'}^\infty(\nabla W_{\bell_{k'}}),\theta_{k}^\infty(\partial_{x_5} W_{\bell_{k}})\right)_{\dot H^1_{\bell_k}},
 \right)
 \end{align*}
 Thus, by parity property, $\psh{\partial_{x_j} W}{\partial_{x_j'} W}=0$ and the decay properties of $W$,
 \begin{align*}
&\left(d_\Gamma \Phi (0,\Gamma^\infty)\cdot\tilde \Gamma\right)_k 
\\
 & = \left(
 \frac{\tilde\lambda_{k}}{\lambda_{k}^\infty}
\left(\theta_{k}^\infty(\Lambda W_{\bell_{k}}),\theta_{k}^\infty(\Lambda W_{\bell_{k}})\right)_{\dot H^1_{\bell_k}},
\right. \frac{\tilde {\mathbf{y}}_{{k},1}}{\lambda_{k}^\infty}
\left(\theta_{k}^\infty(\partial_{x_1} W_{\bell_{k}}),\theta_{k}^\infty(\partial_{x_1}W_{\bell_{k}})\right)_{\dot H^1_{\bell_k}},\ldots,
\\
&\left. \sum_{k}\frac{\tilde {\mathbf{y}}_{k,5}}{\lambda_{k}^\infty}
\left(\theta_{k}^\infty(\partial_{x_5} W_{\bell_{k}}),\theta_{k}^\infty(\partial_{x_5} W_{\bell_{k}})\right)_{\dot H^1_{\bell_k}} \right)+ \mathcal E \cdot \tilde \Gamma,
\end{align*}
where $\|\mathcal E\|\lesssim \frac 1{T_0}$.
Hence, $d_\Gamma \Phi (0,\Gamma^\infty)$ is invertible for $T_0$ large enough, with a  lower bound uniform  in  $\Gamma^\infty$. Moreover,
$\Phi (0,\Gamma^\infty)=0$. Therefore, by the implicit function theorem (in fact, a uniform variant of the IFT), there exist $0<\delta_1\ll 1$,
$0<\delta_2\ll 1$, and a continuous map
$$
\Psi : B_{\dot H^1}(0,\delta_1) \to B_{((0,+\infty)\times\R^5)^K} (\Gamma^\infty,\delta_2),
$$
such that for all $\omega \in B_{\dot H^1}(0,\delta_1)$ and all $\Gamma \in B_{((0,+\infty)\times\R^5)^K} (\Gamma^\infty,\delta_2)$,
$$
\Phi(\omega, \Gamma)=0 \quad \hbox{if and only if} \quad \Gamma = \Psi(\omega).
$$
Moreover,
$$
|\Psi(\omega)-\Gamma^\infty|\lesssim \|\omega\|_{\dot H^1}.
$$

This defines a continuous map $t\in [T_1,T_2]\mapsto(\lambda_k(t),\mathbf{y}_k(t))_{k\in \{1,\ldots,K\}}$ 
such that 
$$
 |\lambda_k(t)-\lambda_k^{\infty}|
  +|\mathbf{y}_k(t)-\mathbf{y}_k^\infty|
 \lesssim \left\|u(t)-\sum_k W_k^\infty(t)\right\|_{\dot H^1}
$$
and such that
$\fl \e(t)$ defined by \eqref{eps2} satisfies the orthogonality conditions \eqref{ortho}.
Since
\begin{equation}\label{diff}
\left\|W_k(t) -W_k^\infty(t)\right\|_{\dot H^1}\lesssim |\lambda_k(t)-\lambda_k^{\infty}|+|\mathbf{y}_k(t)-\mathbf{y}_k^\infty|,
\end{equation}
we have
\begin{align*}
\|\fl \e(t)\|_E 
  &\lesssim \sum_k \left\|W_k(t) -W_k^\infty(t)\right\|_{\dot H^1}
  +\left\|\fl u(t)-\sum_k \fl W_k^\infty(t)\right\|_{\dot H^1\times L^2}\\
      & \lesssim  \left\|\fl u(t)-\sum_k \fl W_k^\infty(t)\right\|_{\dot H^1\times L^2},
\end{align*}
and \eqref{bounds} is proved.

For future reference, note that  
\begin{equation}\label{diff2}
\left\|\langle x\rangle^{1/2}\nabla (W_k(t) -W_k^\infty(t))\right\|_{L^2}
\lesssim t^{\frac 12} \left(|\lambda_k(t)-\lambda_k^{\infty}|+|\mathbf{y}_k(t)-\mathbf{y}_k^\infty|\right).
\end{equation}
Thus, if  $(u(t),\partial_t u(t))\in Y^1\times Y^0$,  then we   have
\begin{align*}
&\|\langle x\rangle^{1/2} \nabla \e(t)\|_{L^2} +  \|\langle x\rangle^{1/2}\eta(t)\|_{L^2}
        \lesssim  \left\|\langle x\rangle^{1/2} \nabla \left(u(t) -  \sum_k W_k^\infty(t) \right) \right\|_{L^2}  \\
      &+ \left\|\langle x\rangle^{1/2} \left( \partial_t u (t)+ \sum_k (\bell_k \cdot \nabla W_k^\infty)(t)\right)\right\|_{L^2}
      +t^{\frac 12}\left\| \fl u(t) -  \sum_k \fl W_k^\infty(t)  \right\|_{H^1\times L^2}, 
\end{align*}
and also
\begin{equation}\label{diff4}
\|\fl \e(t)\|_{Y^1\times Y^0} 
  \lesssim  t^{\frac 12}\left\| \fl u(t) -  \sum_k \fl W_k^\infty(t)  \right\|_{Y^1\times Y^0}.\end{equation}

\medskip

{\bf Step 2.} Equation of $\fl \e$ and parameter estimates.
We formally derive the equations of $\fl \e(t)$, $\lambda_k(t)$ and $\mathbf{y}_k(t)$ from the equation of $u$.
First, 
\begin{align}
\e_t 	& = u_t - \sum_{k} \partial_t  W_k = \eta - \sum_{k}  \frac {\bell_k} {\lambda_k} \cdot \theta_k( \nabla W_{\bell_k})
		-\sum_{k}   \partial_t (\theta_k W_{\ell_k}) \nonumber \\
		 &   =  \eta + \sum_{k}   \frac{\dot \lambda_k }{\lambda_k} \theta_k(\Lambda W_{\bell_k})
+ \sum_{k}  \frac{\dot {\mathbf{y}}_k  }{\lambda_k} \cdot \theta_k(\nabla W_{\bell_k}),\label{eqe1}
\end{align}
since, by direct computations,
\begin{equation}\label{aeqe1}
 \partial_t (\theta_k W_{\bell_k}) =
-\frac {\bell_k}{\lambda_k} \cdot\theta_k(\nabla W_{\bell_k})
-  \frac{\dot \lambda_k }{\lambda_k} \theta_k(\Lambda W_{\bell_k})
-  \frac{\dot {\mathbf{y}}_k  }{\lambda_k} \cdot \theta_k(\nabla W_{\bell_k}).
\end{equation} 
Second (using \eqref{aL})
\begin{align*}
\eta_t   = u_{tt} + \partial_t \left(\sum_{k}   \frac {\bell_k} {\lambda_k} \cdot \theta_k( \nabla W_{\bell_k})\right)
&= \Delta u + |u|^{\frac 43} u 
-  \sum_{k}  \frac{\bell_k} {\lambda_k^2} \cdot \theta_k(\nabla ( \bell_k \cdot \nabla W_{\bell_k}))
\\
& 
-  \sum_{k} \frac{\dot \lambda_k } {\lambda_k^2} \bell_k\cdot\theta_k(\nabla\Lambda W_{\bell_k}) 
-  \sum_{k}  \frac{\dot {\mathbf{y}}_k} {\lambda_k^2} \cdot \theta_k(\nabla ( \bell_k \cdot \nabla W_{\bell_k})).
\end{align*}
Using $u = \sum_{k}  \theta_k W_{\bell_k} + \e$,
we have
\begin{equation}
\label{2.17}
\Delta u =\sum_{k}  \frac {\theta_k}{\lambda_k^2} (\Delta W_{\bell_k})   + \Delta \e ,
\end{equation}
and  
\begin{align}
|u|^{\frac 43} u =f(u)
& = \sum_{k}  f\left(\theta_k W_{\bell_k}\right) 
+   \left( \sum_{k}  f'(\theta_k W_{\bell_k}) \right) \e + R_{\rm NL} +R_W,
\label{2.18}
\end{align}
where $R_W$  is defined in \eqref{RW}
and
$$
R_{\rm NL}  = f\left(   \SWk   + \e\right) 
-   f\left(\SWk\right)     - f' \left(\sum_k  W_k\right) \e$$
Since
$$
f(\theta_k W_{\ell_k})= \frac {\theta_k}{\lambda_k^2} f(W_{\bell_k}),\quad
f'(\theta_k W_{\bell_k})  = \frac {\theta_k}{\lambda_k^{\frac 12}} 
f'(W_{\bell_k}),
$$
we obtain 
$$
\Delta u +|u|^{\frac 43} u=
 \sum_{k}  \frac {\theta_k}{\lambda_k^2} \left( \Delta W_{\bell_k} + W_{\bell_k}^{\frac 73}\right) +
\Delta \e 
+ \frac 73\left( \sum_{k}  \frac {\theta_k}{\lambda_k^{\frac 12}} W_{\bell_k}^{\frac 43}  \right) \e + R_{\rm NL}+R_W.
$$
Using \eqref{eqWbb}, 
we obtain  
\begin{align*}
\eta_t &  =  \Delta \e 
+ \frac 73\left(  \sum_{k}  \frac {\theta_k}{\lambda_k^{\frac 12}} W_{\bell_k}^{\frac 43}\right) \e + R_{\rm NL}+R_W
\\
& 
-  \sum_{k} \dot \lambda_k \frac{\bell_k} {\lambda_k^2} \cdot\theta_k(\nabla\Lambda W_{\bell_k}) 
-  \sum_{k}  \frac{\dot {\mathbf{y}}_k} {\lambda_k^2} \cdot \theta_k(\nabla ( \bell_k \cdot \nabla W_{\bell_k})).
\end{align*}
In conclusion for $\fl \e$, we obtain
\begin{equation}
\label{fle}
\fl \e_t = \fl {\mathcal L} \fl \e  +\fl {\rm Mod}+ \fl R_{\rm NL} +\fl R_W,
\end{equation}
where
\begin{equation}
\label{eq:L}
\fl {\mathcal L}  = \left(\begin{array}{cc}0 & 1 \\
\Delta + \frac 73\left( \sum_k \frac {\theta_k}{\lambda_k^{1/2}} W_{\bell_k}^{4/3}\right)  & 0\end{array}\right),\qquad 
\fl R_{\rm NL} = \left(\begin{array}{c}0 \\R_{\rm NL}\end{array}\right)
\qquad 
\fl R_{W} = \left(\begin{array}{c}0 \\ R_{W}\end{array}\right),
\end{equation}
and
\begin{equation}\label{eq:mod}
\fl {\rm Mod}=
 \sum_{k}  \frac {\dot \lambda_k }{\lambda_k} \fl\theta_k \fl Z_{\bell_k}^{\Lambda} 
+ \sum_{k}  \frac {\dot {\mathbf{y}}_k}{\lambda_k} \cdot \fl \theta_k \fl Z_{\bell_k}^{\nabla}.
\end{equation}

{\bf Step 3.}
Now, we derive the equations of $\lambda_k$ and $\mathbf{y}_k$ from  the orthogonality   \eqref{ortho}. First,
\begin{align*}
   \frac d{dt} \pshbbun {\e}{ \theta_1 (\Lambda W_{\bell_1}) } = 
\pshbbun{\e_t}{ \theta_1 (\Lambda W_{\bell_1}) }+
\pshbbun {\e}{\partial_t \left(\theta_1 (\Lambda W_{\bell_1})\right) }=0
 \end{align*}
 Thus, using \eqref{eqe1},
 \begin{align}
0= &   \pshbbun{\eta}{ \theta_1 (\Lambda W_{\bell_1}) }
 -\pshbbun{\e}{\frac{\bell_1}{\lambda_1}\cdot \theta_1(\nabla  (\Lambda W_{\bell_1}))}\nonumber\\
& +\frac{\dot\lambda_1}{\lambda_1}\left( \pshbbun{\theta_1 (\Lambda W_{\bell_1}) }{ \theta_1 (\Lambda W_{\bell_1}) } -\pshbbun {\e}{  \left(\theta_1 (\Lambda^2 W_{\bell_1})\right) }\right)  \nonumber\\
 & + \pshbbun{ \frac{\dot {\mathbf{y}}_1}{\lambda_1} \cdot\theta_1 (\nabla W_{\bell_1}) }{ \theta_1 (\Lambda W_{\bell_1}) } -\pshbbun {\e}{\frac{\dot {\mathbf{y}}_1}{\lambda_1} \cdot\theta_1 (\nabla\Lambda  W_{\bell_1})}  \nonumber \\
 & + \sum_{k=2}^K \left(\frac{\dot\lambda_k}{\lambda_k}  \pshbbun{\theta_k (\Lambda W_{\bell_k}) }{ \theta_1 (\Lambda W_{\bell_1}) }
 + \pshbbun{ \frac{\dot {\mathbf{y}}_k}{\lambda_k} \cdot\theta_1 (\nabla W_{\bell_1}) }{ \theta_1 (\Lambda W_{\bell_1}) } \right).\label{orL1}
\end{align}
By the decay properties of $W_{\bell}$ and integration by parts, we note that 
\begin{equation}
\label{oldf1}
\left|  \pshbbun{\eta}{ \theta_1 (\Lambda W_{\bell_1}) }\right|
+ \left|\pshbbun{\e}{\frac{\bell_1}{\lambda_1}\cdot \theta_1(\nabla  (\Lambda W_{\bell_1}))} \right| \lesssim \|\fl \e\|_E.
\end{equation}
Next, by \eqref{Tbbeta},
\begin{align*}
   \pshbbun{\theta_1 (\Lambda W_{\bell_1}) }{ \theta_1 (\Lambda W_{\bell_1}) } -\pshbbun {\e}{  \left(\theta_1 (\Lambda^2 W_{\bell_1})\right) }
  =  (1-|\bell_1|^2)^{\frac 12} \|\Lambda W\|_{\dot H^1}^2 +O(\|\fl \e\|_E),
\end{align*}
and by parity,
\begin{align*}
  \pshbbun{ \frac{\dot {\mathbf{y}}_1}{\lambda_1} \cdot\theta_1 (\nabla W_{\bell_1}) }{ \theta_1 (\Lambda W_{\bell_1}) }=0,\quad  \pshbbun {\e}{\frac{\dot {\mathbf{y}}_1}{\lambda_1} \cdot\theta_1 (\nabla\Lambda  W_{\bell_1})} =
O(|\dot {\mathbf{y}}_1|\|\fl \e\|_E ).
\end{align*}

Concerning the last   terms, we claim, for $k\in \{2,\ldots,K\}$,
\begin{equation}
\label{croises}
\left|  \frac{\dot\lambda_k}{\lambda_k}  \pshbbun{\theta_k (\Lambda W_{\bell_k}) }{ \theta_1 (\Lambda W_{\bell_1}) }\right| 
+ \left|  \pshbbun{ \frac{\dot {\mathbf{y}}_k}{\lambda_k} \cdot\theta_1 (\nabla W_{\bell_1}) }{ \theta_1 (\Lambda W_{\bell_1}) }   \right|
\lesssim \frac 1{t^3} \left(\left|\frac{\dot\lambda_k}{\lambda_k}\right|+\left|    \frac{\dot {\mathbf{y}}_k}{\lambda_k} \right|\right).
\end{equation}
Indeed, estimate \eqref{croises} is a direct consequence of the following technical result.
\begin{claim}\label{WW}
Let $0<r_2\leq r_1$ be such that $r_1+r_2> \frac 53$. For $t$ large, the following hold.
\begin{align}
& \hbox{ -- If $r_1>  \frac 53$ then}\quad   \int |W_1|^{r_1} |W_2|^{r_2}\lesssim t^{-3 r_2} ,\label{WW1}\\
& \hbox{ -- If $r_1\leq  \frac 53$ then}\quad  \int |W_1|^{r_1} |W_2|^{r_2} \lesssim t^{5-3 (r_1+r_2)}.\label{WW2}
\end{align}
\end{claim}
\begin{proof}[Proof of Claim \ref{WW}]
Estimates written in this proof are for $t$ large enough, and all constants may depend on $\bell_k$. For convenience, we denote
$$
\rho_k = x-\bell_k t - \mathbf{y}_{k}(t),\quad
\Omega_k(t)= \{ x \hbox{ such that } |\rho_k|< |\ell_1-\ell_2|t/10\}.
$$
Note that, for $t$ large,
\begin{align*}
\hbox{for $x\in \Omega_2$,} \quad |W_1(x)| \lesssim \frac 1{\langle \rho_1\rangle^3}
\lesssim \frac 1{(\langle \rho_2\rangle+t)^3} , \\
\hbox{for $x\in \Omega_2^C$,} \quad |W_2(x)| \lesssim \frac 1{(\langle \rho_2\rangle+t)^3}
\lesssim \frac 1{t^3},\\
\hbox{for $x\in \Omega_1^C$,} \quad |W_1(x)| \lesssim \frac 1{(\langle \rho_1\rangle+t)^3}
\lesssim \frac 1{t^3},
\end{align*}
\emph{Case $r_1> \frac 53$, $r_2>\frac 53$.} Then,
\begin{align*}
\int_{\Omega_2} |W_1|^{r_1}|W_2|^{r_2} \lesssim t^{-3 r_1} \int |W_2|^{r_2} \lesssim t^{-3r_1},
\quad\int_{\Omega_2^C} |W_1|^{r_1}|W_2|^{r_2}  \lesssim t^{-3 r_2} \int |W_1|^{r_1} \lesssim t^{-3r_2}.
\end{align*}
\emph{Case $r_1>  \frac 53$, $0<r_2\leq \frac 53$.} In this case, 
\begin{align*}
\int_{\Omega_2} |W_1|^{r_1}|W_2|^{r_2} 
& \lesssim  \int  \frac 1{(\langle \rho_2\rangle + t)^{3r_1}}  \frac 1{\langle \rho_2\rangle^{3r_2}} dx\\
& \lesssim \int 
\frac 1{(\langle x\rangle+ t)^{3r_1}} \frac {dx}{{\langle x\rangle}^{3r_2}} \lesssim t^{-3(r_1+r_2)+5} \lesssim t^{-3r_2},
\end{align*}
and
\begin{align*}
\int_{\Omega_2^C} |W_1|^{r_1}|W_2|^{r_2}  \lesssim t^{-3 r_2} \int |W_1|^{r_1} \lesssim t^{-3r_2}.
\end{align*}
\emph{Case $0<r_1\leq \frac 53$, $0<r_2\leq \frac 53$, $r_1+r_2> \frac 53$.}
First, as before,
\begin{align*}
\int_{\Omega_2} |W_1|^{r_1}|W_2|^{r_2} 
\lesssim t^{-3(r_1+r_2)+5},\quad 
\int_{\Omega_1} |W_1|^{r_1}|W_2|^{r_2}  \lesssim t^{-3(r_1+r_2)+5}.
\end{align*}
Next, by Holder inequality,
\begin{align*}
\int_{(\Omega_1\cup \Omega_2)^C} |W_1|^{r_1}|W_2|^{r_2} 
& \lesssim\left( \int_{\Omega_1^C} \frac 1{(\langle \rho_1\rangle +t )^{3(r_1+r_2)}}\right)^{\frac {r_1}{r_1+r_2}}
\left( \int_{\Omega_2^C}  \frac 1{(\langle \rho_2\rangle + t)^{3(r_1+r_2)}}\right)^{\frac {r_2}{r_1+r_2}}\\
& \lesssim t^{-3(r_1+r_2)+5}.
\end{align*}
The claim is proved
\end{proof}
In conclusion of the previous estimates,  the orthogonality condition
$\pshbbun { \e}{\theta_1 ({\Lambda W_{\bell_1})}}=0$, gives the following
\begin{equation}\label{el1}
|\dot \lambda_1 | \lesssim \noe {\fl \e} + |\dot {\mathbf{y}}_1 | \noe{\fl \e}
+ \frac 1{t^3}\sum_{k=1}^K \left(|\dot \lambda_k |+|\dot {\mathbf{y}}_k |\right).
\end{equation} 
Using the other orthogonality conditions, 
we obtain similarly, for $k=1,\ldots,5$,
\begin{align}\label{ey1}
&|\dot\lambda_k | \lesssim \noe {\fl \e} + |\dot {\mathbf{y}}_k |\noe{\fl \e}
+  \frac 1{t^3}\sum_{k'=1}^K \left(|\dot \lambda_{k'}|+|\dot {\mathbf{y}}_{k'}|\right),
\\
&|\dot {\mathbf{y}}_k | \lesssim \noe {\fl \e} + |\dot\lambda_k |\noe{\fl \e}
+  \frac 1{t^3}\sum_{k'=1}^K \left(|\dot \lambda_{k'}|+|\dot {\mathbf{y}}_{k'}|\right).\end{align} 
Combining these estimates, we find \eqref{le:p}. Note that equation \eqref{orL1} and the corresponding formula for $\dot \lambda_k$ and $\dot {\mathbf{y}}_k$ for $k\geq 1$, where $\fl \e$ is replaced by $\fl u- \sum_k \fl W_k$ form a nondegenerate first order differential system, whose unique solution is   $(\lambda_k,\mathbf{y}_k)_k$, which justifies the $C^1$ regularity of the parameters.
 
\medskip

{\bf Step 4.} Unstable directions. Recall that the quantities $z_k^\pm$ are defined through the $L^2$ scalar product $z_k^{\pm} (t)  = \psl {\fl \e(t)}{\fl {\tilde\theta}_k \fl Z_{\bell_k}^{\pm}}$. 
Recall also that $\fl Z_{\bell_k}^{\pm} \in \mathcal S$.
By \eqref{fle}, we have
\begin{align*}
&\frac d{dt} z_1^\pm   =\frac d{dt} \psl {\fl \e}{\fl {\tilde \theta}_1\fl Z_{\bell_1}^\pm} = 
\psl {\fl \e_t}{\fl {\tilde \theta}_1 \fl Z_{\bell_1}^{\pm}}+
\psl {\fl \e}{\partial_t\left(\fl {\tilde \theta}_1 \fl Z_{\bell_1}^{\pm}\right)} \\
 & = \psl {\fl {\mathcal L} \fl \e}{\fl {\tilde \theta}_1 \fl Z_{\bell_1}^{\pm}}
 + \frac {\bell_1}{\lambda_1}  \cdot  \psl{\fl \e}{\fl{\tilde \theta}_1 \nabla\fl Z_{\ell_1}^{\pm}}\\
&  +   \frac {\dot  \lambda_1}{\lambda_1}\left(  \psl{\fl \theta_1 \fl Z_{\bell_1}^{\Lambda}}{\fl {\tilde \theta}_1 \fl Z_{\bell_1}^{\pm}}
  -\psl{\fl \e}{\fl{\tilde \theta}_1 \fl\Lambda \fl Z_{\bell_1}^{\pm}} \right) 
  + \frac {\dot {\mathbf{y}}_1}{\lambda_1} \cdot \left( \psl{\fl \theta_1 \fl Z_{\bell_1}^{\nabla}}{\fl {\tilde \theta}_1 \fl Z_{\bell_1}^{\pm}}
  -\psl{\fl \e}{\fl{\tilde \theta}_1 \nabla\fl Z_{\bell_1}^{\pm}}\right)\\
& +  \sum_{k=2}^K \left( \frac {\dot \lambda_k}{\lambda_k}\psl {\fl\theta_k \fl Z_{\bell_k}^{\Lambda}}
{\fl {\tilde \theta}_1 \fl Z_{\bell_1}^{\pm}}
+ \frac {\dot {\mathbf{y}}_k}{\lambda_k} \cdot\psl{\fl \theta_k \fl Z_{\bell_k}^{\nabla}}{\fl {\tilde \theta}_1 \fl Z_{\bell_1}^{\pm}}\right)
+ \psl{\fl R_{\rm NL}+\fl R_{W}}{\fl {\tilde \theta}_1 \fl Z_{\bell_1}^{\pm}}.
\end{align*}
 
First, by direct computations, using \eqref{Zbbeta},
\begin{align*}
& \psl {\fl {\mathcal L} \fl \e}{\fl {\tilde \theta}_1 \fl Z_{\bell_1}^{\pm}}
 - \frac {\bell_1}{\lambda_1}   \cdot  \psl{\fl \e}{\fl{\tilde \theta}_1 \nabla\fl Z_{\bell_1}^{\pm}} \\
  & = \frac 1{\lambda_1} \psl{\fl \e}{\fl{\tilde \theta}_1 \left( -H_{\bell_1} J \fl Z_{\bell_1}^\pm\right)} + \sum_{k\geq 2} \psl \e {f'(\theta_k W_{\bell_k})  (\theta_1 Z_{\bell_1,2}^\pm)}\\
  & = \pm \frac {\sqrt{\lambda_0}}{\lambda_1} (1-|\bell_1|^2)^{\frac 12} z_1^\pm 
  +  \sum_{k\geq 2}\psl \e {f'(\theta_k W_{\bell_k})  (\theta_1 Z_{\bell_1,2}^\pm)}.
\end{align*}
Note that by the decay properties of $\fl Z_{\ell_1}^\pm$ and Claim \ref{WW}, for $k\geq 2$,
\begin{equation}
\label{bl}
	\left| \psl \e {f'(\theta_k W_{\bell_k})  (\theta_1 Z_{\bell_1,2}^\pm)}\right| \lesssim \frac {\noh \e}{t^4}.
\end{equation}

By \eqref{oZbb}, we have
$$
\psl{\fl \theta_1 \fl Z_{\bell_1}^{\Lambda}}{\fl {\tilde \theta}_1 \fl Z_{\bell_1}^{\pm}}= \psl{ \fl Z_{\bell_1}^{\Lambda}}{ \fl Z_{\bell_1}^{\pm}} =0,
$$
and thus, by \eqref{le:p},
\begin{equation}
\label{bl2}
\left|\frac {\dot\lambda_1}{\lambda_1}\left(  \psl{\fl \theta_1 \fl Z_{\ell_1}^{\Lambda}}{\fl {\tilde \theta}_1 \fl Z_{\ell_1}^{\pm}}
  -\psl{\fl \e}{\fl{\tilde \theta}_1 \fl \Lambda \fl Z_{\ell_1}^{\pm}} \right)\right| 
  \lesssim |\dot \lambda_1| \noe {\fl\e} \lesssim \noe {\fl\e}^2  .
\end{equation}
Similarly,
\begin{equation}
\label{bl3}
\left|\frac {\dot {\mathbf{y}}_1}{\lambda_1} \cdot \left( \psl{\fl \theta_1 \fl Z_{\ell_1}^{\nabla}}{\fl {\tilde \theta}_1 \fl Z_{\ell_1}^{\pm}}+\psl{\fl \e}{\fl{\tilde \theta}_1 \nabla\fl Z_{\ell_1}^{\pm}}\right)\right|\lesssim \noe {\fl\e}^2 .
\end{equation}

Next, by Claim \ref{WW}, we have
$$
\left|\psl {\fl\theta_2 \fl Z_{\ell_2}^{\Lambda}}
{\fl {\tilde \theta}_1 \fl Z_{\ell_1}^{\pm}}\right|
+\left|\psl{\fl \theta_2 \fl Z_{\ell_2}^{\nabla}}{\fl {\tilde \theta}_1 \fl Z_{\ell_1}^{\pm}}\right| \lesssim \frac 1{t^3}.
$$
Thus, by \eqref{le:p},
\begin{equation}
\label{bl4}
\left|\frac {\dot \lambda_2}{\lambda_2} \psl {\fl\theta_2 \fl Z_{\ell_2}^{\Lambda}}
{\fl {\tilde \theta}_1 \fl Z_{\ell_1}^{\pm}}\right|+
\left| \frac {\dot {\mathbf{y}}_2}{\lambda_2} \cdot\psl{\fl \theta_2 \fl Z_{\ell_2}^{\nabla}}{\fl {\tilde \theta}_1 \fl Z_{\ell_1}^{\pm}}\right| \lesssim 
\frac {\noe {\fl\e}}{t^3} .
\end{equation}

Finally, we claim
\begin{equation}
\label{Rnlb}
\left| \pse{\fl R_{W}}{\fl {\tilde \theta}_1 \fl Z_{\ell_1}^{\pm}}\right|+
\left| \pse{\fl R_{\rm NL}}{\fl {\tilde \theta}_1 \fl Z_{\ell_1}^{\pm}}\right|
\lesssim
\frac 1 {t^3} + \frac {\noh \e}t + \noh \e^2.
\end{equation}
\noindent Proof of \eqref{Rnlb}.
Note the following estimate, for any $p>1$,
\begin{equation}\label{general}
|R_W|=
\left|f\left(\SWk\right) - \sum_k f(W_k)\right| \lesssim \sum_{k\neq k'}|W_{k}|^{\frac 43}  |W_{k'}|.
\end{equation}
Thus, using Claim \ref{WW},
\begin{equation}
\label{Rnl1}
\left| \psl{R_W}{\theta_1 (-\ell_1 \partial_{x_1}\Lambda W_{\ell_1})}\right|\lesssim
\int \left( \sum_{k\neq k'} |W_k|^{\frac 43} |W_{k'}|\right) |W_1|^{\frac 43}
\lesssim \frac 1 {t^3} .
\end{equation}

Next, we decompose $R_{\rm NL}=  R_{\e,1}+ R_{\e,2},$ where 
\begin{align*}  
  R_{\e, 1} &=  \left( f'\left( \sum_k W_k\right)   - \sum_k f'\left( W_k\right)\right) \e,\\
 R_{\e, 2} & = f\left(\sum_k W_k   + \e\right) - f\left(\sum_k W_k\right) 
-   f'\left( \sum_k W_k\right)  \e.
\end{align*}
First, 
\begin{align*}
|R_{\e, 1}| & \leq \left(  \sum_{k'\neq k} |W_{k'}|^{\frac 13} |W_k|\right) |\e|.
\end{align*}
Thus, using Claim \ref{WW} and \eqref{z2}
\begin{align}
& \left|  \psl{R_{\e,1}}{\theta_1 (-\bell_1 \partial_{x_1}\Lambda W_{\ell_1})}\right|\lesssim
\int \left( \sum_{k\neq k'} |W_{k'}|^{\frac 13} |W_{k}|\right) |W_1|^{\frac 43} |\e| \\
& \lesssim  \left( \int |\e|^2 |W_1|^{\frac 23} \right)^{\frac 12} \left( \int
 W_1^2 \left(\sum_{k'\neq k} |W_{k'}|^{\frac 23} |W_k|^2 \right)\right)^{\frac 12}
 \lesssim \frac 1{t} \noh \e.
\end{align}
Finally, we have $|R_{\e,2}|\lesssim \left(\sum_k |W_k|^{\frac 13} \right)|\e|^2 
+ |\e|^{\frac 73}$, and thus, by \eqref{z2} and \eqref{z1},
\begin{align*}
& \left|  \psl{R_{\e,2}}{\theta_1 (-\bell_1 \partial_{x_1}\Lambda W_{\ell_1})}\right|\lesssim \int \left(\left(\sum_k |W_k|^{\frac 13} \right)|\e|^2 
+ |\e|^{\frac 73} \right)|W_1|^{\frac 43} \lesssim \noh \e^2 + \noh \e^{\frac 73}.
\end{align*}
The proof of  \eqref{Rnlb} is complete.

\medskip

Extending this computation to  $z_k^\pm$ for any $k$, we obtain in conclusion
\begin{equation}
\label{le:zb}
\left| \frac d{dt} z_k^{\pm}(t) \mp \frac{\sqrt{\lambda_0}}{\lambda_k(t)}(1-|\bell_k|^2)^{\frac 12}z_k^{\pm} (t)\right| \lesssim \noe {\fl\e(t)}^2 + \frac {\noe {\fl\e(t)}}t + \frac 1 {t^3}.
\end{equation}
The proof of Lemma \ref{le:4} is complete.
\end{proof}

\section{Proof of Theorem \ref{th:1} case (B)}\label{s4}
In this section,  we prove the existence of a solution $u(t)$ of \eqref{wave} satisfying \eqref{eq:th1}--\eqref{eq:th1bis} in case (B) of Theorem \ref{th:1}.
We argue by compactness and obtain $u(t)$ as the limit of suitable approximate multi-solitons $u_n(t)$. 

Let $K\geq 1$ and for  all $k\in\{1,\ldots,K\}$, let $\lambda^\infty_k>0$, ${\mathbf y}^\infty_k\in\R^5$ and $\bell_k \in \R^5$.
Let $S_n\to +\infty$.
For $\zeta_{k,n}^{\pm}\in \R$ small to be determined later (see statements of Proposition \ref{pr:s4}, Claim \ref{le:modu2} and Lemma \ref{le:bs2}), we consider the solution $u_n$ of
\begin{equation}\label{defun}
\left\{\begin{aligned} 
& \partial_t^2 u_n - \Delta u_n - |u_n|^{\frac 4{3}} u_n = 0 \\ 
& (u_n(S_n),\partial_t u_n(S_n))
 =   \sum_k  \left[  (\fl \theta_k^\infty \fl W_{\bell_k} ) (S_n)
 + \zeta_{k,n}^+ ( \fl \theta_k^\infty \fl Z_{\bell_k}^+)(S_n)
 + \zeta_{k,n}^-  (\fl \theta_k^\infty  \fl Z_{\bell_k}^-)(S_n)  \right]
 \end{aligned}\right.
\end{equation}
Note that since $(u_n(S_n),\partial_t u_n(S_n))\in Y^1\times Y^0$, the solution $u_n$ is well-defined in $Y^1\times Y^0$ at least on a small interval of time around $S_n$ (see section 2.1).

Now, we state the main uniform estimates on $u_n$.
\begin{proposition}\label{pr:s4}Under the assumptions of Theorem \ref{th:1}, case (B),
there exist $n_0>0$ and $T_0>0$  such that, for any $n\geq n_0$, there exist
$(\zeta_{k,n}^{\pm})_{k\in\{1,\ldots,K\}}\in \R^{2K}$, with
\begin{equation}\label{eq:choix}
 \sum_{k=1}^K |\zeta_{k,n}^{\pm}|^2 \lesssim \frac 1{S_n^{5}},
\end{equation}
and such that the solution $\fl u_n=(u_n,\partial_t u_n)$ of \eqref{defun} is well-defined in $Y^1\times Y^0$ on the time interval $[T_0,S_n]$
and satisfies
\begin{equation}\label{eq:un} 
\forall t\in [T_0,S_n],\quad  
 \left\|\fl u_n(t) 
-\sum_{k=1}^K  \fl W_k^\infty\right\|_{\dot H^1\times L^2}  \lesssim \frac 1{t},
\quad 
\left\|\fl u_n(t) 
-\sum_{k=1}^K  \fl W_k^\infty\right\|_{Y^1\times Y^0} 
\lesssim \frac 1{t^{\frac 12}}.
\end{equation}
\end{proposition}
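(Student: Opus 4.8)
The plan is a backward-in-time bootstrap argument on $[T_0,S_n]$ for the decomposition of Lemma \ref{le:4}, combined with a topological selection of the parameters $(\zeta_{k,n}^{\pm})_k$ in the spirit of \cite{CMM}. Fix $n$ large; for $\vec\zeta=(\zeta_{k,n}^{\pm})_k$ in a small ball of $\R^{2K}$ let $u_n$ solve \eqref{defun}. As long as $\fl u_n$ stays $\delta_0$-close to $\sum_k\fl W_k^\infty$ in $\dot H^1\times L^2$, Lemma \ref{le:4} provides $C^1$ parameters $\lambda_k(t),\mathbf{y}_k(t)$ and the decomposition $\fl u_n=\sum_k\fl W_k+\fl\e$ satisfying \eqref{ortho}, \eqref{syst_e}, \eqref{le:p}, \eqref{le:z}; we let $z_k^{\pm}$ be as in \eqref{le:dz} and set $\mathcal N(t)=\sum_k|z_k^-(t)|^2$. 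I would define the exit time $T^*(\vec\zeta)$ as the infimum of $\tau\in[T_0,S_n]$ such that, on $[\tau,S_n]$, the solution remains in $Y^1\times Y^0$ and
\[
\bigl\|\fl u_n(t)-\sum_k\fl W_k^\infty(t)\bigr\|_{\dot H^1\times L^2}\le\delta_0,\qquad \noe{\fl\e(t)}\le K_1 t^{-2},\qquad \|\fl\e(t)\|_{Y^1\times Y^0}\le K_1 t^{-1/2},\qquad \mathcal N(t)\le t^{-4},
\]
where $K_1\gg1$ is to be fixed. Since $\|\fl u_n(S_n)-\sum_k\fl W_k^\infty(S_n)\|_{\dot H^1\times L^2}\lesssim|\vec\zeta|$, Lemma \ref{le:4}, \eqref{bounds} and \eqref{diff4} give $\noe{\fl\e(S_n)}+\sum_k|z_k^{\pm}(S_n)|\lesssim|\vec\zeta|$ and $\|\fl\e(S_n)\|_{Y^1\times Y^0}\lesssim S_n^{1/2}|\vec\zeta|$; hence the four bounds hold strictly at $t=S_n$ and, by the local theory of Section 2.1 and continuity, on a nontrivial interval ending at $S_n$, so $T^*(\vec\zeta)\in[T_0,S_n)$ is well defined.

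Next I would improve every bound except $\mathcal N(t)\le t^{-4}$, with constants \emph{independent of $K_1$ and of $\vec\zeta$}. For the energy bound I use the two properties of $\mathcal H_K$ from Proposition \ref{mainprop}: coercivity up to the unstable directions, $\noe{\fl\e}^2\lesssim\mathcal H_K+\sum_k(|z_k^+|^2+|z_k^-|^2)$ (the orthogonalities \eqref{ortho} being built in), and the monotonicity $-\frac{d}{dt}(t^2\mathcal H_K)\lesssim t^{-3}$. Integrating the latter on $[t,S_n]$ and using $\mathcal H_K(S_n)\lesssim\noe{\fl\e(S_n)}^2+\sum_k|z_k^{\pm}(S_n)|^2\lesssim|\vec\zeta|^2$, with $|\vec\zeta|\lesssim S_n^{-2}$, yields $t^2\mathcal H_K(t)\lesssim S_n^2|\vec\zeta|^2+t^{-2}\lesssim t^{-2}$, hence $\noe{\fl\e(t)}^2\lesssim t^{-4}+\sum_k|z_k^+(t)|^2+\mathcal N(t)$. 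On the bootstrap interval $\mathcal N(t)\le t^{-4}$ and $|z_k^+(t)|\lesssim t^{-5/2}$ (see below), so $\noe{\fl\e(t)}\lesssim t^{-2}$ with an absolute constant; fixing $K_1$ above it and then $T_0$ large gives a strict improvement. For the weighted bound I would write the Duhamel representation associated to \eqref{fle}, treating the linear potential term $\sum_k f'(W_k)\e$, $\fl R_{\rm NL}$, $\fl R_W$ and $\fl{\rm Mod}$ as source terms, and apply \eqref{EEn} together with \eqref{esob}: the interaction pieces are estimated by Claim \ref{WW} (the weight $\lg x\rg^{1/2}$ costing a factor $\lesssim t^{1/2}$ near each soliton), and the $\e$-dependent pieces by the already-closed energy bound, \eqref{le:p} and Hardy \eqref{z2}, which gives $\|\fl\e(t)\|_{Y^1\times Y^0}\lesssim S_n^{1/2}|\vec\zeta|+t^{-1/2}\lesssim t^{-1/2}$, again a strict improvement. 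For $z_k^+$, \eqref{le:z} reads $\bigl|\frac{d}{dt}z_k^+-\nu_k(t)z_k^+\bigr|\lesssim t^{-3}$ on the bootstrap interval, with $\nu_k(t)=\sqrt{\lambda_0}\,(1-|\bell_k|^2)^{1/2}/\lambda_k(t)$ bounded below by a constant $\nu_{\min}>0$; integrating backward from $S_n$, the transition kernel $e^{-\int_t^s\nu_k}$ is $\le1$ for $s\ge t$, so $|z_k^+(t)|\lesssim|z_k^+(S_n)|+t^{-3}\lesssim t^{-5/2}$. Finally, $\|\fl u_n-\sum_k\fl W_k^\infty\|_{\dot H^1\times L^2}\lesssim\noe{\fl\e}+\sum_k(|\lambda_k-\lambda_k^\infty|+|\mathbf{y}_k-\mathbf{y}_k^\infty|)$, and integrating \eqref{le:p} from $S_n$ (using \eqref{bounds} at $S_n$) and \eqref{diff} bounds this by $\lesssim t^{-1}\ll\delta_0$ for $T_0$ large.

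It remains to choose $\vec\zeta$ so that $T^*(\vec\zeta)=T_0$. By the previous step, if $T^*(\vec\zeta)>T_0$ then at $t=T^*$ the only bound that can be saturated is $\mathcal N(T^*)=(T^*)^{-4}$. Moreover \eqref{le:z} gives $\frac{d}{dt}\mathcal N=-2\sum_k\nu_k|z_k^-|^2+O(t^{-3}\mathcal N^{1/2})$, whence
\[
\frac{d}{dt}\bigl(t^4\mathcal N(t)\bigr)\Big|_{t=T^*}\le C\,(T^*)^{-1}-2\nu_{\min}<0\qquad\text{for }T_0\text{ large},
\]
so the crossing is transversal and $T^*$ depends continuously on $\vec\zeta$. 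I would then choose $(\zeta_{k,n}^+)_k$ so that $z_k^+(S_n)=0$ for all $k$ — possible, with $|(\zeta_{k,n}^+)_k|\lesssim|(\zeta_{k,n}^-)_k|$, since $\vec\zeta\mapsto(z_k^\pm(S_n))_k$ is a small perturbation of a linear isomorphism — and regard $\vec\zeta^-=(\zeta_{k,n}^-)_k\in\R^K$ as free, ranging over a ball-like neighbourhood $\mathcal D$ of $0$ of size $\sim S_n^{-2}$, chosen so that on $\partial\mathcal D$ one has $\mathcal N(S_n)=S_n^{-4}$, i.e. $T^*(\vec\zeta^-)=S_n$. If $T^*(\vec\zeta^-)>T_0$ for every $\vec\zeta^-\in\mathcal D$, then $\vec\zeta^-\mapsto(T^*)^2(z_k^-(T^*(\vec\zeta^-)))_k\in\mathbb S^{K-1}$ is continuous and, up to the rescaling, the identity on $\partial\mathcal D$ — a retraction of $\mathcal D$ onto its boundary, impossible. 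Hence some $\vec\zeta^-$ gives $T^*=T_0$. For that choice, integrating \eqref{le:z} backward from $S_n$ along the solution and using $\mathcal N(t)\le t^{-4}$ on $[T_0,S_n]$ forces $z_k^-(S_n)$ to cancel the forcing integral up to exponentially small terms, so $|z_k^-(S_n)|\lesssim S_n^{-3}$; inverting $\vec\zeta\mapsto(z_k^\pm(S_n))_k$ then yields $\sum_k|\zeta_{k,n}^\pm|^2\lesssim S_n^{-5}$, which is \eqref{eq:choix}. Finally \eqref{eq:un} follows from the bounds now established on $[T_0,S_n]$: the first from $\noe{\fl\e}\lesssim t^{-2}$ together with $\sum_k(|\lambda_k-\lambda_k^\infty|+|\mathbf{y}_k-\mathbf{y}_k^\infty|)\lesssim t^{-1}$ and \eqref{diff}, and the second from $\|\fl\e\|_{Y^1\times Y^0}\lesssim t^{-1/2}$ and \eqref{diff2}.

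The principal difficulty — the heart of the paper, which the present proof uses through Proposition \ref{mainprop} — lies in the construction of $\mathcal H_K$ and the monotonicity $-\frac{d}{dt}(t^2\mathcal H_K)\lesssim t^{-3}$: in contrast with the exponentially decaying case, the soliton interactions entering $R_W$ and $\mathcal E_K$ are only of size $t^{-3}$, and absorbing them forces the delicate choice of the cutoff $\chi_K$ gluing the $K$ local virial-type functionals, together with the collinearity assumption (B). Within the proof of Proposition \ref{pr:s4} itself, the main point of care is the coupling of the bootstrap hierarchy — the energy estimate requires $|z_k^{\pm}|$ small, yet $z_k^-$ is controlled only modulo the topological selection — and the transversality check at the exit time that makes that selection work.
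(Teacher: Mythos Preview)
Your proposal is essentially the paper's own argument: the same backward bootstrap on $(\fl\e,\lambda_k,\mathbf{y}_k,z_k^\pm)$, the same use of Proposition~\ref{mainprop} to close the energy norm, the same energy-estimate~\eqref{EEn} argument for the $Y^1\times Y^0$ bound, and the same Brouwer-type selection of the unstable parameters (the paper packages the map $\vec\zeta\mapsto(z_k^\pm(S_n))_k$ as Claim~\ref{le:modu2} and works directly with $(\xi_k)\in\overline B_{\R^K}(S_n^{-5/2})$, which gives \eqref{eq:choix} a priori rather than via your a posteriori integration of \eqref{le:z}, but both routes are valid). The one slip is your claim that the improved energy constant is ``absolute'': the monotonicity \eqref{time} carries the bootstrap constant, so what you actually get is $\noe{\fl\e}\lesssim\sqrt{K_1}\,t^{-2}$ --- this still closes for $K_1$ large (the paper makes this $\sqrt{C^*}$-versus-$C^*$ gain explicit, and uses $(C^*)^2$ rather than $C^*$ in the weighted bootstrap for the same reason), but your downstream estimates should be read with that dependence in mind.
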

\subsection{Proof of Theorem \ref{th:1} case (B), assuming Proposition \ref{pr:s4}}
In view of the uniform bounds obtained in \eqref{eq:un}  at $t=T_0$,
up to the extraction of a subsequence, $(u_n(T_0),\partial_t u_n(T_0))$ converges strongly
in $\dot H^1\times L^2$ to some $(u_0,u_1)$ as $n\to +\infty$.
Consider the solution $u(t)$ of \eqref{wave} associated to the initial data  $(u_0,u_1)$ at $t=T_0$.
Then, by the uniform bounds \eqref{eq:un} and the continuous dependence of the solution of \eqref{wave} with respect to its initial data in the energy space $\dot H^1\times L^2$ 
(see e.g. \cite{KM} and references therein), the solution $u$ is well-defined in the energy space  on $[T_0,\infty)$ and satisfies
\begin{equation}\label{eq:unbis}
\left\|\fl u(t) 
-\sum_{k=1}^K  W_k^\infty\right\|_{\dot H^1\times L^2} \lesssim \frac 1{t}.
\end{equation}
This finishes the proof of Theorem \ref{th:1} in case (B), assuming Proposition \ref{pr:s4}.

\medskip

The rest of this section is devoted to the proof of Proposition \ref{pr:s4}.

\subsection{Bootstrap setting}
We denote by $B_{\R^K}(\rho)$ (respectively, $S_{\R^K}(\rho)$)  the ball (respectively, the sphere) of $\R^K$ of center $0$ and of radius $\rho>0$, for 
the usual norm $|(\xi_k)_k|=\left(\sum_{k=1}^K \xi_k^2\right)^{1/2}$. 

For $t=S_n$ and for $t<S_n$ as long as $u(t)$ is well-defined in $\dot H^1\times L^2$ and satisfies \eqref{hyp:4}, we decompose $u_n(t)$ as in Lemma \ref{le:4}. In particular, we denote by $(\e,\eta)$, $(\lambda_k)_k$, $(\mathbf{y}_k)_k$, $(z_k^{\pm})_k$ the parameters of the decomposition of $u_n$.
We also set
\begin{equation}\label{defWK}
 \sumk = \sum_{k=1}^K W_k, \quad  \osumk = \sum_{k=1}^K |W_k|.
\end{equation}

We start with a technical result similar to Lemma 3 in \cite{CMM}. This claim will allow us to adjust the initial values of
$(z_k^\pm(S_n))_k$  from the choice of $\zeta_{k,n}^\pm$ in \eqref{defun}.

\begin{claim}[Choosing the initial  unstable modes] \label{le:modu2}
There exist $n_0>0$ and  $C>0$ such that, for all $n\geq n_0$, for any
$(\xi_k)_{k\in\{1,\ldots,K\}}\in \overline B_{\R^K}(S_n^{-5/2})$, there exists
a unique $(\zeta_{k,n}^{\pm})_{k\in\{1,\ldots,K\}}\in B_{\R^K}(C S_n^{-5/2})$ such that
the decomposition of $u_n(S_n)$ satisfies
\begin{equation}\label{modu:2}
z_k^-(S_n)=\xi_k,\quad
z_k^+(S_n)=0,
\end{equation}
\begin{equation}\label{modu3}
|\lambda_k(S_n)-\lambda_k^\infty|+
|\mathbf{y}_k(S_n)-\mathbf{y}_k^\infty|
+ \noe {\fl \e(S_n)}\lesssim S_n^{-5/2},
\end{equation}
\begin{equation}\label{modu5}
\|\fl \e(S_n)\|_{Y^1\times Y^0} \lesssim S_n^{-2}.
\end{equation}
\end{claim}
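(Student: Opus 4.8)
The plan is to run a finite‑dimensional fixed‑point argument for the map $\zeta\mapsto (z_k^{\pm}(S_n))_k$, in the spirit of Lemma~3 of \cite{CMM}. Write $\zeta=(\zeta_k^+,\zeta_k^-)_k\in\R^{2K}$ and let $u_n^\zeta$ be the solution of \eqref{defun} with these parameters. First I would note that the datum at $S_n$ differs from $\sum_k\fl W_k^\infty(S_n)$ by $\sum_k\big(\zeta_k^+\fl\theta_k^\infty\fl Z_{\bell_k}^+ +\zeta_k^-\fl\theta_k^\infty\fl Z_{\bell_k}^-\big)(S_n)$, whose $\dot H^1\times L^2$ norm is $\lesssim|\zeta|$ since $\fl Z_{\bell_k}^{\pm}\in\mathcal S$ and, at $t=S_n$, $\fl\theta_k^\infty$ is a fixed dilation composed with a translation; hence for $|\zeta|$ below a fixed threshold \eqref{hyp:4} holds in a neighbourhood of $S_n$, and Lemma~\ref{le:4} yields a decomposition of $u_n^\zeta$ at $S_n$ with parameters depending in a $C^1$ way on $\zeta$, uniformly in $n$ (the implicit function theorem of Step~1 of Lemma~\ref{le:4} has uniform constants, the sole $n$‑dependence being a translation, invisible to $\dot H^1\times L^2$). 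This defines $\mathcal A_n(\zeta)=(z_k^+(S_n),z_k^-(S_n))_k\in\R^{2K}$. At $\zeta=0$ the datum is exactly $\sum_k\fl W_k^\infty(S_n)$, for which $\lambda_k=\lambda_k^\infty$, $\mathbf y_k=\mathbf y_k^\infty$, $\fl\e\equiv 0$ solve the orthogonality conditions \eqref{ortho}; by uniqueness of the decomposition, $\mathcal A_n(0)=0$.

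Next I would compute $d\mathcal A_n(0)$. Since $\fl\e$ vanishes at $\zeta=0$, only its variation contributes, and $\partial_{\zeta_m^\nu}\fl\e(S_n)\big|_0$ equals $(\fl\theta_m^\infty\fl Z_{\bell_m}^{\nu})(S_n)$ minus a linear combination, with $O(1)$ coefficients (by \eqref{bounds}), of the directions $(\fl\theta_j^\infty\fl Z_{\bell_j}^{\Lambda})(S_n)$ and $(\fl\theta_j^\infty\fl Z_{\bell_j}^{\nabla_l})(S_n)$ produced by $\partial_\zeta\lambda_j$ and $\partial_\zeta\mathbf y_j$ (recall that these are exactly the modulation directions appearing in \eqref{eq:mod}/\eqref{aeqe1}). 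Pairing with $\fl{\tilde\theta}_k^\infty\fl Z_{\bell_k}^{\pm}$ (the analogue of $\fl{\tilde\theta}_k$ with $\lambda_k^\infty,\mathbf y_k^\infty$) and changing variables: the $m=k$ part of the first term becomes $\lambda_k^\infty\psl{\fl Z_{\bell_k}^{\nu}}{\fl Z_{\bell_k}^{\pm}}$, the $j=k$ modulation parts become $\lambda_k^\infty\psl{\fl Z_{\bell_k}^{\Lambda}}{\fl Z_{\bell_k}^{\pm}}$ and $\lambda_k^\infty\psl{\fl Z_{\bell_k}^{\nabla_l}}{\fl Z_{\bell_k}^{\pm}}$, all zero by \eqref{oZbb}, while every term with $m\neq k$ or $j\neq k$ pairs two profiles centred at $\bell_m S_n$ and $\bell_k S_n$ with $|\bell_m-\bell_k|S_n\to\infty$ and is therefore $O(S_n^{-3})$ by the estimates of Claim~\ref{WW}. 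Hence
\[ d\mathcal A_n(0)=\Theta+E_n,\qquad \Theta=\mathrm{diag}_k\big(\lambda_k^\infty M_{\bell_k}\big),\quad M_{\bell_k}=\big(\psl{\fl Z_{\bell_k}^{\nu}}{\fl Z_{\bell_k}^{\nu'}}\big)_{\nu,\nu'\in\{+,-\}},\quad \|E_n\|\lesssim S_n^{-3}. \]
It then remains to check that each $M_{\bell}$ is invertible. For $\bell=0$ one has $\fl Z_0^{\pm}=(\pm\sqrt{\lambda_0}\,Y,\,Y)$, so $M_0=\|Y\|_{L^2}^2\left(\begin{smallmatrix}\lambda_0+1 & 1-\lambda_0\\ 1-\lambda_0 & \lambda_0+1\end{smallmatrix}\right)$ with $\det M_0=4\lambda_0\|Y\|_{L^2}^4>0$, and the general case follows from the Lorentz change of variables. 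Thus $\Theta$ is invertible with $\|\Theta^{-1}\|$ bounded in terms of the fixed data $(\lambda_k^\infty,\bell_k)$, and for $n\geq n_0$ the operator $d\mathcal A_n(0)$ is invertible with $\|d\mathcal A_n(0)^{-1}\|\leq 2\|\Theta^{-1}\|$.

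Finally, since $d\mathcal A_n$ is Lipschitz on the fixed ball with a constant uniform in $n$ (smoothness of the decomposition map and of the explicit profiles, translations being harmless), a quantitative inverse function theorem — equivalently, the Banach fixed‑point theorem for $\zeta\mapsto d\mathcal A_n(0)^{-1}\big(b-(\mathcal A_n(\zeta)-d\mathcal A_n(0)\zeta)\big)$ — produces, for every $b\in\R^{2K}$ with $|b|$ below a fixed threshold, a unique $\zeta$ with $|\zeta|\leq C|b|$ and $\mathcal A_n(\zeta)=b$. Taking $b=(0,\dots,0,\xi_1,\dots,\xi_K)$, with $|b|=|(\xi_k)_k|\leq S_n^{-5/2}$, gives the desired unique $(\zeta_{k,n}^{\pm})_k$ with $\sum_k|\zeta_{k,n}^{\pm}|^2\lesssim S_n^{-5}$ realizing \eqref{modu:2}, i.e.\ \eqref{eq:choix}. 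Estimate \eqref{modu3} is then \eqref{bounds} at $t=S_n$ with right‑hand side $\lesssim|\zeta|\lesssim S_n^{-5/2}$; and \eqref{modu5} follows from the weighted bound established just before \eqref{diff4} (together with \eqref{diff2}): the difference $\fl u_n(S_n)-\sum_k\fl W_k^\infty(S_n)=\sum_k(\zeta_{k,n}^+\fl\theta_k^\infty\fl Z_{\bell_k}^+ +\zeta_{k,n}^-\fl\theta_k^\infty\fl Z_{\bell_k}^-)(S_n)$ is, up to rapidly decaying tails, supported near $x\sim\bell_k S_n$, so the $\langle x\rangle^{1/2}$ weight costs only a factor $S_n^{1/2}$, yielding $\|\fl\e(S_n)\|_{Y^1\times Y^0}\lesssim S_n^{1/2}|\zeta|\lesssim S_n^{-2}$. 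The main obstacle is the second step: establishing the uniform‑in‑$n$ invertibility of $d\mathcal A_n(0)$, which needs both the non‑degeneracy of the $2\times2$ matrices $M_{\bell_k}$ and a careful verification that all cross‑soliton and modulation‑induced corrections are genuinely $o(1)$ — this last point being exactly where Claim~\ref{WW} and the orthogonalities \eqref{oZbb} are used.
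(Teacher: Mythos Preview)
Your proposal is correct and follows essentially the same route as the paper, which simply defers to Lemma~3 of \cite{CMM} for the existence of $(\zeta_{k,n}^\pm)_k$ and invokes \eqref{bounds} and \eqref{diff4} for \eqref{modu3}--\eqref{modu5}; you have carefully spelled out the finite-dimensional inverse-function-theorem argument that lies behind that reference, including the crucial use of \eqref{oZbb} to kill the diagonal modulation corrections and of Claim~\ref{WW} for the off-diagonal ones. One small remark: your appeal to ``the Lorentz change of variables'' to pass from $\det M_0>0$ to $\det M_{\bell}\neq 0$ is a bit imprecise, since $\fl Z_{\bell}^{\pm}$ carry the extra exponential factors $e^{\pm\frac{\sqrt{\lambda_0}}{\sqrt{1-|\bell|^2}}\bell\cdot x}$ and are not merely $\fl Z_0^{\pm}$ composed with a linear change of variables; however the conclusion is immediate anyway, since $M_{\bell}$ is the $L^2$-Gram matrix of $\fl Z_{\bell}^{+}$ and $\fl Z_{\bell}^{-}$, and these are manifestly linearly independent (for $\bell\neq 0$ look at the second components $Y_{\bell}e^{\pm c\,\bell\cdot x}$, for $\bell=0$ at the first components $\pm\sqrt{\lambda_0}\,Y$), so $M_{\bell}$ is positive definite.
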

\begin{proof}[Sketch of the proof of Claim \ref{le:modu2}]
The proof of existence of $(\zeta_{k,n}^{\pm})_k$ in Claim \ref{le:modu2} is similar to  Lemma~3 in \cite{CMM} and we omit it.
Estimates in \eqref{modu3} are consequences of \eqref{bounds},
 \eqref{modu5} follows from \eqref{diff4}.
\end{proof}

From now on, for any $ (\xi_{k})_k\in \overline B_{\R^K}(S_n^{-5/2})$,
we fix  $(\zeta_{k,n}^{\pm})_k$ as given by Claim \ref{le:modu2} and  the corresponding solution $u_n$ of \eqref{defun}.

\medskip

The proof of Proposition \ref{pr:s4} is based on the following bootstrap estimates:
 for $C^*>1$ to be chosen,
\begin{equation}\label{eq:BS}
\left.\begin{aligned}
\sum_{k=1}^K|\lambda_k(t)-\lambda_k^\infty|+|\mathbf{y}_k(t)-\mathbf{y}_k^\infty|\leq \frac{(C^*)^2}{t}, \quad 
\sum_{k=1}^K |z_k^\pm(t)|^2\leq \frac 1{t^5}&\\ 
\noe{\fl \e(t)}\leq \frac{C^*}{t^2},\quad
\|\fl \e(t)\|_{Y^1\times Y^0}\leq \frac{(C^*)^2}{t^{\frac 12}}
& \\
\end{aligned}\right\}
\end{equation}
Set
\begin{equation}\label{def:tstar}
T^*=T_n^*((\xi_{k})_k)=\inf\{t\in[T_0,S_n]\ ; \ \hbox{$u_n$ satisfies \eqref{hyp:4} and (\ref{eq:BS}) holds on $[t,S_n]$} \} .
\end{equation}
Note that by Claim \ref{le:modu2},
estimate  \eqref{eq:BS} is satisfied  
at $t=S_n$.
Moreover,  if \eqref{eq:BS} is satisfied
on $[\tau,S_n]$ for some $\tau\leq S_n$ then by the well-posedness theory in $Y^1\times Y^0$
and continuity, 
$u_n(t)$ is well-defined and satisfies the decomposition of Lemma \ref{le:4} on $[\tau',S_n]$, 
 for some $\tau'<\tau$. 
In particular,  the definition of $T^*$ makes sense and it will suffice to strictly improve \eqref{eq:BS} on $[T^*,S_n]$ to prove $T^*=T_0$ for some ($\xi_k)_k$. Note also that we will prove  that $T^*=S_n$ for  $(\xi_k)_k\in S_{\R^K}(S_n^{-5/2})$
(see proof of Lemma \ref{le:bs2}).

\medskip

In what follows, we will prove that there exists $T_0$ large enough and at least one choice of $(\xi_{k})_{k}\in B_{\R^K}(S_n^{-5/2})$ so that $T^*=T_0$, which is enough to finish the proof of Proposition \ref{pr:s4}.
For this, we  derive general estimates for any $(\xi_k)_k\in \overline B_{\R^K}(S_n^{-5/2})$ (see Lemma \ref{le:bs1}) and use a   topological argument (see Lemma \ref{le:bs2}) to control the instable directions, in order to strictly improve estimates in \eqref{eq:BS} and thus prove that they cannot be saturated on $[T_0,S_n]$.

\subsection{Energy functional}
One of the main points of the proof of Proposition \ref{pr:s4} is to derive suitable estimates in the energy norm that will strictly improve the bound on  $\noe{\fl \e(t)}$ from \eqref{eq:BS}; the other estimates then follow easily.
 
 We claim the following proposition in  case  (B) of Theorem \ref{th:1}.
This is the only place in the paper where we need the restriction of collinear speeds.
 
\begin{proposition}\label{mainprop}
Under the assumptions of Theorem \ref{th:1}, case (B),
there exist $\mu>0$ and a function $\mathcal H_K(t)$ on $[T^*,S_n]$, which satisfies the following properties.\\
\emph{(i) Bound.}
\begin{equation}\label{boun}
|\mathcal H_K(t)| \leq \frac {\|\fl \e\|_E^2}{\mu}.
\end{equation}
\emph{(ii) Coercivity.}
\begin{equation}\label{coer}
\mathcal H_K(t) \geq \mu\|\fl \e\|_E^2 - \frac {t^{-5}}{\mu}  .
\end{equation}
\emph{(iii) Time variation.}
\begin{equation}\label{time}
- \frac d{dt} \left( t^{2} \mathcal H_K\right) (t) \lesssim C^* t^{-3}.
\end{equation}
\end{proposition}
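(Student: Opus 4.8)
The plan is to construct $\mathcal H_K$ explicitly as the functional $\mathcal H_K = \int \mathcal E_K + 2\int(\chi_K\,\partial_{x_1}\e)\eta$ announced in the introduction, where the cutoff $\chi_K(t,x)$ interpolates between the constant $\ell_k$ near each soliton $W_k$ and the self-similar profile $x_1/t$ in the transition regions $\{x_1 \sim \frac{\ell_k+\ell_{k+1}}{2}t\}$, and then to verify the three properties in turn. For property (i), the bound $|\mathcal H_K|\lesssim \|\fl\e\|_E^2$ is immediate from the definition of $\mathcal E_K$ once one Taylor-expands the nonlinear term $|\mathcal W_K+\e|^{10/3}-|\mathcal W_K|^{10/3}-\frac{10}{3}|\mathcal W_K|^{4/3}\mathcal W_K\,\e$ to second order: the quadratic part is controlled by $\int |\mathcal W_K|^{4/3}\e^2 \lesssim \|\e\|_{\dot H^1}^2$ via H\"older and Sobolev \eqref{z1}, the cubic-and-higher remainders by $\int(|\mathcal W_K|^{1/3}|\e|^3 + |\e|^{10/3})\lesssim \|\e\|_{\dot H^1}^{10/3}+\dots$, and the cross term by $|\int\chi_K\partial_{x_1}\e\,\eta|\le \|\chi_K\|_{L^\infty}\|\e\|_{\dot H^1}\|\eta\|_{L^2}$ since $\chi_K$ is bounded uniformly in $t$.

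For property (ii), coercivity, I would localize: introduce a partition of unity $1 = \sum_k \psi_k(t,x) + (\text{exterior})$ adapted to the soliton regions, write $\mathcal H_K = \sum_k \mathcal H_K^{(k)} + (\text{cross and exterior terms})$, and observe that each $\mathcal H_K^{(k)}$ is, up to lower-order localization errors, the local energy functional $\int_{x\sim\ell_k t}\big(|\nabla\e|^2+|\eta|^2+2\ell_k(\partial_{x_1}\e)\eta-f'(W_k)\e^2\big)$. To this one applies the localized coercivity estimate \eqref{eq:2.30} of Lemma \ref{pr:22} (after rescaling/translating to center $W_k$), which gives control of $\int(|\nabla\e|^2+|\eta|^2)\varphi_k^2$ modulo the finite-dimensional directions $(\e,\theta_k\Lambda W_k)$, $(\e,\theta_k\partial_{x_j}W_k)$ and the two scalar products $z_k^\pm$. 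The first set of directions vanishes by the orthogonality conditions \eqref{ortho} of the decomposition, and $\sum_k|z_k^\pm|^2 \lesssim t^{-5}$ from the bootstrap \eqref{eq:BS}, which produces exactly the $-t^{-5}/\mu$ loss. The cross terms and the error from replacing $\chi_K$ by $\ell_k$ (which is $O(t^{-1})$ pointwise on supp$\psi_k$, or supported in transition regions where $\mathcal W_K$ is exponentially—no, algebraically—small) are bounded by interaction quantities of size $t^{-3}$ or better times $\|\e\|_E^2$, hence absorbable for $T_0$ large. The nonlinear term again Taylor-expands: its quadratic part is $-\frac73\int|\mathcal W_K|^{4/3}\e^2$, which near $W_k$ is $-f'(W_k)\e^2$ up to interaction errors $\big||\mathcal W_K|^{4/3}-|W_k|^{4/3}\big|\lesssim \sum_{k'\ne k}|W_k|^{1/3}|W_{k'}|$ (plus symmetric), controlled by Claim \ref{WW} and Hardy \eqref{z2}.

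For property (iii), the time-derivative estimate, I would differentiate $t^2\mathcal H_K$ in time, using the equations \eqref{syst_e} for $(\e,\eta)$ and the equations \eqref{le:p} for the modulation parameters. The derivative of $\int\mathcal E_K$ produces, after integrating by parts and using $\partial_t^2\e = \Delta\e + (\text{nonlinear}) + \text{Mod}$, the main term $-2\int(\partial_t\mathcal W_K)\cdot(\dots)$ together with: (a) a term $-\int(\partial_t\chi_K)\,(\partial_{x_1}\e)\eta$-type contribution from the $\chi_K$ part, where $\partial_t\chi_K$ is $O(t^{-1})$ in transition regions and $0$ near solitons — this is where the structure $\chi_K\approx x_1/t$ is essential, since $\partial_t(x_1/t)=-x_1/t^2$ combines with $\partial_{x_1}(x_1/t)=1/t$ to yield the correct sign/size when paired against $\ell_k$-weighted terms, mimicking the algebra of \cite{Ma,MMnls}; (b) modulation terms $\text{Mod}_\e,\text{Mod}_\eta$ paired against $\mathcal E_K$ and $\chi_K\partial_{x_1}\e\,\eta$, bounded using \eqref{le:p} and the orthogonality conditions by $(\sum|\dot\lambda_k|+|\dot{\mathbf y}_k|)\|\e\|_E \lesssim \|\e\|_E^2 \lesssim C^{*2}t^{-4}$, hence $\lesssim C^* t^{-3}$ after multiplication by the stray powers; (c) the genuine interaction term $\int R_W\cdot(\dots)$ with $R_W = f(\mathcal W_K)-\sum_k f(W_k)$, for which $|R_W|\lesssim\sum_{k\ne k'}|W_k|^{4/3}|W_{k'}|$ by \eqref{general} and Claim \ref{WW} (with $r_1=4/3$, $r_2=1$, so $r_1+r_2=7/3>5/3$, giving $t^{5-3\cdot7/3}=t^{-2}$, which when paired against $\|\e\|_E$ or the soliton-localized weights and then multiplied by $t^2$ from $t^2\mathcal H_K$ gives precisely order $t^{-3}$, the advertised interaction bound). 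The factor $t^2$ is chosen exactly so that $\frac{d}{dt}(t^2\mathcal H_K) = 2t\mathcal H_K + t^2\frac{d}{dt}\mathcal H_K$, and the good sign of the leading self-similar term in $\frac{d}{dt}\mathcal H_K$ (of order $-t^{-1}\|\e\|_E^2$, coming from the virial-type structure of $\chi_K\sim x_1/t$) absorbs the $2t\mathcal H_K\sim t\|\e\|_E^2$, leaving only the interaction and modulation remainders of size $C^* t^{-3}$.

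\textbf{Main obstacle.} The hard part is property (iii), and specifically the bookkeeping of the transition-region contributions: one must show that the terms generated by $\partial_t\chi_K$, $\partial_{x_1}\chi_K$, $\overline\nabla\chi_K$ and the commutator of $\chi_K$ with $\Delta$ — which live precisely where two solitons "meet" and where no single $W_k$ dominates — combine with a favorable sign (or are at worst $O(t^{-3})$), rather than producing an uncontrolled $O(t^{-1}\|\e\|_E^2)$ term of the wrong sign that would break the Gronwall argument. This is the analogue of the delicate monotonicity computation in \cite{Ma,MMnls}, made more subtle here because the solitons decay only algebraically, so $\mathcal W_K$ is not negligible in the transition regions and the "gluing" of the $K$ local energy functionals into a single $\chi_K$-weighted one must be done with the self-similar profile $x_1/t$ precisely to get the cancellations. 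Establishing the coercivity (ii) uniformly in $t$, i.e. checking that the cross terms between distinct soliton regions do not destroy positivity, is a secondary difficulty, handled by the algebraic-decay interaction bounds of Claim \ref{WW}.
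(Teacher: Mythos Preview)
Your overall plan---the same functional $\mathcal H_K=\int\mathcal E_K+2\int(\chi_K\partial_{x_1}\e)\eta$, the same three-step verification---matches the paper, and your arguments for (i) and for the localized coercivity in (ii) are essentially correct. But there is a concrete gap in your treatment of (iii), in the modulation and soliton-motion terms.

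You bound the modulation contribution to $\frac{d}{dt}\mathcal H_K$ by $(\sum_k|\dot\lambda_k|+|\dot{\mathbf y}_k|)\|\fl\e\|_E\lesssim\|\fl\e\|_E^2\lesssim (C^*)^2t^{-4}$. After multiplication by $t^2$ this is $(C^*)^2t^{-2}$, which is \emph{not} $\lesssim C^*t^{-3}$; no ``stray powers'' rescue it, and the bootstrap would fail (you would only recover $\|\fl\e\|_E\lesssim C^*t^{-3/2}$). The same problem occurs with the term $\int(\partial_t\mathcal W_K)\big(f(\mathcal W_K+\e)-f(\mathcal W_K)-f'(\mathcal W_K)\e\big)$, which is also naively $O(\|\fl\e\|_E^2)$.

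The missing idea is that the modulation (and soliton-motion) pieces arising from $\frac{d}{dt}\int\mathcal E_K$ and from $\frac{d}{dt}\int 2\chi_K(\partial_{x_1}\e)\eta$ must be \emph{combined before estimating}. Writing $M_k=\frac{\dot\lambda_k}{\lambda_k}\Lambda W_k+\dot{\mathbf y}_k\cdot\nabla W_k$, one has ${\rm Mod}_\eta=-\sum_k\ell_k\partial_{x_1}M_k$, so the combined modulation term against $\e$ becomes
\[
\int\e\Big({-}\Delta M_k+\ell_k\chi_K\,\partial_{x_1}^2M_k-f'(\mathcal W_K)M_k\Big).
\]
Now the kernel identity $L_{\ell_k}M_k=0$ (i.e.\ $-\Delta M_k+\ell_k^2\partial_{x_1}^2M_k-f'(W_k)M_k=0$) reduces this to $\int\e\big(\ell_k(\chi_K-\ell_k)\partial_{x_1}^2M_k+(f'(W_k)-f'(\mathcal W_K))M_k\big)$, and both pieces are small because $\chi_K=\ell_k$ exactly where $M_k$ is concentrated. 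This gains roughly $t^{-13/10}$, giving $(C^*)^2t^{-53/10}\lesssim t^{-5}$. The analogous combination produces the factor $\sum_k(\ell_k-\chi_K)\partial_{x_1}W_k$ in front of the nonlinear remainder, with the same gain. The orthogonality conditions \eqref{ortho} you invoke play no role here; the cancellation comes from the kernel of $L_{\ell_k}$ together with the design of $\chi_K$.

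A secondary imprecision: the virial term coming from $\partial_{x_1}\chi_K,\partial_t\chi_K$ is controlled by $\frac{C}{t}\mathcal N_\Omega$, the energy over the \emph{transition region only}, not by $\frac{C}{t}\|\fl\e\|_E^2$. To absorb it into $\frac{2}{t}\mathcal H_K$ one needs the sharpened coercivity $\mathcal H_K\ge \mathcal N_\Omega+\mu\mathcal N_{\Omega^C}-\ldots$ with coefficient exactly $1$ (not merely $\mu$) on $\mathcal N_\Omega$; your heuristic ``$-t^{-1}\|\fl\e\|_E^2$ absorbs $2t\mathcal H_K$'' glosses over this matching of constants.
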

\begin{proof}[Proof of Proposition \ref{mainprop}]
We consider the case where  the $K$ solitons are moving in the same direction. In particular, by rotation invariance, we   assume  
 \begin{equation}\label{hyp:sec3}
 \forall k\in\{1,\ldots,K\},\quad 
 \bell_k = \ell_k \mathbf{e}_1 \quad \hbox{where}\quad \ell_k \in (-1,1).
 \end{equation}
Moreover, without loss of generality,
 $$
 -1<\ell_1<\ldots<\ell_K<1.
$$
Fix
$$
\max_k (|\beta_k|)  < \overline \ell <1.
$$
For 
$$
0<\sigma<\frac 1{10} \min(\ell_{k+1}-\ell_k)
$$
small enough to be fixed, we set
\begin{align*}
 \hbox{for $k=1,\dots, K-1$},\quad 		&\ell_k^{+}=\ell_k+\sigma(\ell_{k+1}-\ell_k),\\
  	\hbox{for $k=2,\dots, K$},\quad  	&\ell_k^{-}=\ell_k-\sigma(\ell_{k}-\ell_{k-1}),
\end{align*}
and for $t>0$,
$$
\Omega(t) =  ( (\ell_1^+ t,\ell_{2}^- t)\cup\ldots \cup (\ell_{K-1}^+ t,\ell_{K}^- t)) \times \R^4,\quad
\Omega^C(t) = \R^5\setminus \Omega(t).
$$
We consider the continuous function $\chi_K(t,x)=\chi_K(t,x_1)$ defined as follows, for all $t>0$,
\begin{equation}\label{defchiK}
\left\{\begin{aligned}
    &\hbox{$\chi_K(t,x) =  \ell_1$ for $x_1\in (-\infty,  \ell_1^+ t]$},\\
	& \hbox{$\chi_K(t,x) = \ell_k $ for $x_1\in [\ell_k^- t, \ell_k^+ t]$, for $k\in \{2,\ldots,K-1\}$,}
  	\\ & \hbox{$\chi_K(t,x)= \ell_K$ for $x_1\in [\ell_{K}^- t,+\infty)$},\\
    	& \chi_K(t,x) = \frac{x_1}{(1-2\sigma)t}  - \frac {\sigma}{1-2 \sigma} (\ell_{k+1}+\ell_k) 
	\hbox{ for $x_1 \in [\ell_k^+ t,\ell_{k+1}^-t ]$, $k\in \{1,\ldots,K-1\}$}.
\end{aligned}
\right.
\end{equation}
In particular,
\begin{equation}\label{derchi}\left\{\begin{aligned}
& \partial_t \chi_K(t,x) =0,\quad  \nabla \chi_K(t,x)=0, \quad \hbox{on $\Omega^C(t)$},\\
   & \partial_{x_1} \chi_K(t,x)= \frac{1}{(1-2\sigma)t}  \quad \hbox{for $x\in \Omega(t)$},\\
   	& \partial_{t} \chi_K(t,x)= -\frac 1t \frac{x_1}{(1-2\sigma)t} \quad \hbox{for $x\in \Omega(t)$}.
\end{aligned} \right.\end{equation}
We define 
\begin{align*}
  & \mathcal H_K(t) =\int {\mathcal E}_K(t,x) dx 
  +    2 \int \left(\chi_K(t,x) \partial_{x_1} \e(t,x)\right) \eta(t,x) dx,
\end{align*}
where 
\begin{equation}\label{defeK}
 {\mathcal E}_K =|\nabla\e|^2+|\eta|^2- 2 \left(F\left(\sumk +\e\right)-F\left(\sumk\right)-f\left(\sumk\right)\e\right).
\end{equation}

\smallskip

Note   that from \eqref{eq:BS} and \eqref{le:p}, we have
$$
\sum_k \left( |\dot \lambda_k| + |\dot {\mathbf{y}}_k| \right) \lesssim \noe{\fl \e(t)}  \lesssim \frac {C^*}{t^2}.
$$
In particular, from \eqref{Mod_e} and \eqref{Mod_eta}, for all $p \in \N^5$ (here $|p|=\sum_j p_j$),
\begin{equation}\label{Idem}
  |\partial_x^p{\rm Mod}_\e(t)|\lesssim \frac{C^*}{t^2} \osumk^{1+\frac{|p|}{3}},\quad
  |\partial_x^p{\rm Mod}_\eta(t)|\lesssim \frac{C^*}{t^2} \osumk^{\frac 43+\frac{|p|}{3}}.
\end{equation}

\smallskip
 \noindent\emph{Proof of \eqref{boun}.}
 Since
 $$
| F(\sumk+\e) - F(\sumk) - f(\sumk) \e|\lesssim |\e|^{\frac {10}3} + \osumk^{\frac 43} |\e|^2,
 $$
the estimate \eqref{boun} on $\mathcal H_K$ follows from H\"older inequality, \eqref{z1} and \eqref{eq:BS}.

\smallskip

\noindent\emph{Proof of \eqref{coer}.}
Set
\begin{align*}  
   \Nint(t)   = \int_{\Omega} \left( |\nabla \e(t)|^2 + \eta^2(t) + 2   (\chi_K(t)\partial_{x_1} \e(t) ) \eta(t)\right)
\end{align*}
and
\begin{align*}  
   \Nsol(t)   = \int_{{\Omega^C}} \left( |\nabla \e(t)|^2 + \eta^2(t) \right).
\end{align*}
Note that, since $|\chi_K|<\overline \ell$,
\begin{equation}\label{nint} 
\begin{aligned}
\Nint  & =   \overline \ell \int_{\Omega} \left|\frac {\chi_K}{\overline \ell} \partial_{x_1} \e + \eta\right|^2  
+\int_{\Omega} |\overline \nabla \e|^2
+ \int_{\Omega} \left( 1-  \frac {\chi_K^2}{\overline \ell} \right) (\partial_{x_1} \e)^2 
+ (1-\overline \ell) \int \eta^2\\
  &  \geq \overline \ell \int_{\Omega} \left|\frac {\chi_K}{\overline \ell} \partial_{x_1} \e + \eta\right|^2  
    + (1-\overline \ell) \int_{\Omega}\left( |\nabla \e|^2 + \eta^2\right).
\end{aligned}
\end{equation}  
To obtain \eqref{coer}, we will actually prove the following stronger property
\begin{align}
  \mathcal H_K(t) &  \geq 
 \Nint(t) + \mu \Nsol(t) -\frac {t^{-5}}\mu  - \frac{t^{-4\alpha}}\mu \noe {\fl \e}^2 - \frac{1}\mu \noe {\fl \e}^3.\label{lF}
\end{align}
We decompose  
$
\mathcal H_K =   {\bf f_1}+ {\bf f_2} + {\bf f_3},
$
where
\begin{align*}
  {\bf f_1}  
& =   \int |\nabla \e|^2 -  \int \left(\sum_k f'(W_k)\right)\e^2  +  \int \eta^2 + 2\int  (\chi_K\partial_{x_1} \e) \eta,
\end{align*}
\begin{align*}
{\bf f_2} 
& =  - 2 \int \left( F\left(\sumk +\e\right)-F\left(\sumk\right) -f\left(\sumk\right)\e - \frac 12 f' \left(\sumk\right) \e^2 \right),
\end{align*}
\begin{align*}
{\bf f_3} 
& =    \int \left( \sum_k f'(W_k) -  f' \left(\sumk\right)  \right) \e^2 ,
\end{align*}
We claim the following estimates
\begin{align}
&  {\bf f_1} \geq  \Nint + \mu \Nsol - \frac {t^{-5}}{\mu} - \frac{t^{-4\alpha}}\mu \noe {\fl \e}^2,   \label{ff2}\\
& |{\bf f_2}|+|{\bf f_3}|\lesssim \noe{\fl\e}^3+\frac {\noe{\fl\e}^2}{t^2}.\label{ff3}
\end{align}
Note that combining these estimates with \eqref{eq:BS} and taking $T_0$ large enough (depending on $C^*$), we obtain \eqref{lF} and then \eqref{coer} for some other $\mu>0$.
\medskip

\noindent Proof of \eqref{ff2}. The main ingredient in the proof of \eqref{ff2} is Lemma \ref{pr:22}. For $\varphi$ defined in \eqref{phia}, set
$$
\varphi_k(t,x) = \varphi\left(\frac {x-\ell_k \mathbf{e}_1 t - \mathbf{y}_{k}(t)}{\lambda_k(t)}\right).
$$
We decompose $\bf f_1$ as follows
\begin{align*}
{\bf f_1}  & =   \Nint + \sum_k \left(\int |\nabla \e|^2\varphi_k^2 -  \int f'(W_k)\e^2 +  \int \eta^2 \varphi_k^2+ 2  \int  (\chi_K \partial_{x_1} \e) \eta\varphi_k^2 \right)\\
&+  \int_{\Omega^C} \left(|\nabla \e|^2+\eta^2+2 \chi_K(\partial_{x_1} \e)\eta\right)\left(1-\sum_k \varphi_k^2\right) \\
&- \int_{\Omega} \left(|\nabla \e|^2+\eta^2+2 \chi_K(\partial_{x_1} \e)\eta\right)\left(\sum_k \varphi_k^2\right)\\
& + 2 \sum_k \int (\chi_K- \ell_k) (\partial_{x_1} \e) \eta \varphi_k^2
=   \Nint+ {\bf f_{1,1}}   +{\bf f_{1,2}} +{\bf f_{1,3}}+{\bf f_{1,4}}.
\end{align*}

By Lemma \ref{pr:22}, the orthogonality conditions on $\fl \e$ and a change of variable, we have
\begin{align*}
{\bf f_{1,1}}  & \geq   \mu  \int \left( |\nabla \e|^2+\eta^2\right) \left(\sum_k\varphi_k^2\right)
-\frac 1{\mu} \sum_k \left( (z_k^-)^2 + (z_k^+)^2\right).
\end{align*}
Thus, using \eqref{eq:BS},
\begin{align*}
{\bf f_{1,1}}  & \geq   \mu  \int \left( |\nabla \e|^2+\eta^2\right) \left(\sum_k\varphi_k^2\right)
-\frac 1{\mu} \frac 1{t^5}
\geq \mu \int_{\Omega^C} \left( |\nabla \e|^2+\eta^2\right) \left(\sum_k\varphi_k^2\right)
-\frac 1{\mu} \frac 1{t^5}.
\end{align*}
Next, note that if $x$ is such that
$\varphi_k(t,x) >\frac 12$, then $\varphi_{k'}^2(x) \lesssim t^{-4 \alpha}$ for $k'\neq k$.
Thus, $$1-\sum_k \varphi_k^2  \gtrsim  - t^{-4 \alpha}.$$
By direct computations (with the notation $v_+ = \max (0,v)$),
\begin{align*}
{\bf f_{1,2}} &  =   \overline \ell \int_{\Omega^C} \left|\frac{\chi_K}{\overline \ell} \partial_{x_1} \e + \eta\right|^2 \left(1-\sum_k \varphi_k^2\right)  +\int_{\Omega^C} |\overline \nabla \e|^2\left(1-\sum_k \varphi_k^2\right) \nonumber\\
&+ \int_{\Omega^C} \left( 1- \frac{ \chi_K^2}{\overline\ell}\right) |\partial_{x_1} \e|^2\left(1-\sum_k \varphi_k^2\right) 
+ (1-\overline\ell) \int_{\Omega^C} \eta^2 \left(1-\sum_k \varphi_k^2\right)
\\ & \geq (1-\overline \ell) \int_{\Omega^C}  \left(| \nabla \e|^2 + \eta^2\right)\left(1-\sum_k \varphi_k^2\right)_+ - \frac  { \noe {\fl \e}^2}{t^{4\alpha}}.  
\end{align*}
Also, we see easily that 
$
|{\bf f_{1,3}}| \lesssim   t^{-4\alpha} \noe {\fl \e}^2. 
$

Finally, by the definition of $\chi_K$ in \eqref{defchiK}, the decay property of $\varphi$ and \eqref{eq:BS} (for a bound on $\mathbf{y}_k$), we have
$$
\|(\chi_K - \ell_k)\varphi_k\|_{L^\infty} \leq t^{-4 \alpha}.
$$
Thus,
$$
|{\bf f_{1,4}}| \lesssim t^{-4\alpha} \noe {\fl \e}^2 .
$$

Therefore, for some $\mu >0$, and $T_0$ large enough, we have
$$
{\bf f_{1,1}}+{\bf f_{1,2}}+{\bf f_{1,3}}+{\bf f_{1,4}}
\geq \mu \Nsol  - \frac 1\mu \frac 1{t^5} - t^{-4\alpha} \noe {\fl \e}^2 .
$$

\noindent Proof of \eqref{ff3}.
Using H\"older inequality, \eqref{z1} and \eqref{eq:BS}, we have
\begin{align*}
|{\bf f_{2}}|&\lesssim\int|\e|^{\frac{10}3}+|\e|^3\osumk^{\frac13} \lesssim \noe{\fl\e}^{3}.
\end{align*}
Next, since by the decay property of $W$,
$$
\left|f'(\sumk)- \sum_{k} f'(W_{k})  \right|\lesssim 
\frac{\osumk^{\frac 23}}{t^2},
$$
using \eqref{z2}, we obtain
\begin{align*}
|{\bf f_{3}}|&   \lesssim\frac 1{t^2} \int|\e|^2\osumk^{\frac23}      \lesssim\frac{\noe{\fl\e}^{2}}{t^2}.
\end{align*}

\emph{Proof of \eqref{time}.} {\bf Step 1.} First estimates.
We decompose
$$
\frac d{dt} \mathcal H_K = \int \partial_t {\mathcal E}_K + 2 \int \chi_K \partial_t\left( (\partial_{x_1} \e) \eta  \right) 
+ 2 \int (\partial_t \chi_K) (\partial_{x_1} \e) \eta = {\bf g_1} + {\bf g_2} + {\bf g_3}.
$$
We claim the following estimates
\begin{align}\label{pg1}
{\bf g_1} & =   2 \int  \e \left( -\Delta {\rm Mod}_\e - f'(\sumk) {\rm Mod}_\e   \right) +2 \int \eta{\rm Mod}_\eta \nonumber\\
& + 2 \int \left( \sum_k \ell_k \partial_{x_1} W_k\right) \left( f(\sumk + \e) - f(\sumk)-f'(\sumk) \e\right)
  + O\left(\frac{C^*}{t^5}\right),
\end{align}
\begin{align}
{\bf g_2} & = 
 -\frac 1{(1-2\sigma) t} \int_{\Omega}
        \left(\eta^2 + (\partial_{x_1} \e)^2
        - |\overline \nabla \e|^2   \right)\nonumber\\
&  - 2\int   \chi_K(\partial_{x_1} \sumk )\left( f(\sumk + \e) - f(\sumk)-f'(\sumk) \e\right)\nonumber\\
&+2 \int (\chi_K \partial_{x_1} {\rm Mod}_\e) \eta - 2 \int  \e   \chi_K \partial_{x_1} {\rm Mod}_\eta
+ O\left(\frac{C^*}{t^5}\right),\label{pg2}
\end{align}
\begin{align}\label{pg3}
{\bf g_3} & = -\frac 2{(1-2\sigma)t}  \int_{\Omega} \frac{x_1}{t} \partial_{x_1} \e \eta.
\end{align}

\emph{Estimate on $\bf g_1$.}
From direct computations and the definition of ${\rm Mod}_\e$ in \eqref{Mod_e}, we have
\begin{align*}
{\bf g_1} & = 2 \int \left(\nabla \e_t \cdot \nabla \e + \eta_t \eta - \e_t\left( f(\sumk + \e) - f(\sumk) \right)\right)\\
& + 2 \int \left( \sum_k \ell_k \partial_{x_1} W_k\right) \left( f(\sumk + \e) - f(\sumk)-f'(\sumk) \e\right)\\
& + 2 \int {\rm Mod}_\e\left( f(\sumk + \e) - f(\sumk)-f'(\sumk) \e\right)  = {\bf g_{1,1}} + {\bf g_{1,2}} + {\bf g_{1,3}}.
\end{align*}
Using \eqref{syst_e} and integration by parts,
\begin{align*}
{\bf g_{1,1}} & = 2 \int \eta R_W  + 2 \int \left( \nabla \e\cdot \nabla {\rm Mod}_\e
-  \left( f(\sumk + \e) - f(\sumk) \right) {\rm Mod}_\e + \eta {\rm Mod}_\eta \right)
\end{align*}
By Cauchy-Schwarz inequality, \eqref{general} and then \eqref{eq:BS},
$$
\left|\int \eta R_W\right| \lesssim \|\eta\|_{L^2} \|R_W\|_{L^2} \lesssim \frac{\|\eta\|_{L^2}}{t^3} \lesssim \frac{C^*}{t^5}.
$$
Thus,
\begin{align*}
{\bf g_{1,1}}+{\bf g_{1,3}} & =   2 \int  \e \left( -\Delta {\rm Mod}_\e - f'(\sumk) {\rm Mod}_\e   \right) +2 \int \eta{\rm Mod}_\eta
+ O\left(\frac{C^*}{t^5}\right),
\end{align*}
and \eqref{pg1} follows.

\emph{Estimate on $\bf g_2$.}
\begin{align*}
{\bf g_2} & = 2 \int (\chi_K \partial_{x_1} \e_t) \eta+ 2 \int (\chi_K \partial_{x_1} \e) \eta_t \\
& = 2 \int (\chi_K \partial_{x_1} \eta) \eta  + 2 \int (\chi_K \partial_{x_1} \e) \left( \Delta \e + \left( f(\sumk + \e) - f(\sumk) \right)+ R_W\right)
  \\& +2 \int (\chi_K \partial_{x_1} {\rm Mod}_\e) \eta +  2 \int (\chi_K \partial_{x_1} \e) {\rm Mod}_\eta.
\end{align*}
Note that by integration by parts and \eqref{derchi}
\begin{align*}
 2 \int (\chi_K \partial_{x_1} \eta) \eta  + 2 \int (\chi_K \partial_{x_1} \e)  \Delta \e 
&  = - \int \partial_{x_1} \chi_K \left(\eta^2 +(\partial_{x_1} \e)^2- |\overline \nabla \e|^2\right)\\
&  =- \frac 1{(1-2\sigma) t} \int_{\Omega}
        \left(\eta^2 + (\partial_{x_1} \e)^2
        - |\overline \nabla \e|^2   \right).
\end{align*}
Next, we observe
\begin{align*}
   \int (\chi_K \partial_{x_1} \e) \left( f(\sumk + \e) - f(\sumk)  \e\right) 
&=  \int \chi_K \partial_{x_1} \left( F(\sumk + \e) - F(\sumk)-f(\sumk) \e\right)\\
& -   \int   \chi_K (\partial_{x_1} \sumk) \left( f(\sumk + \e) - f(\sumk)-f'(\sumk) \e\right).
\end{align*}
Moreover, integrating by parts and using  \eqref{derchi},
\begin{align*}
& -\int \chi_K \partial_{x_1} \left( F(\sumk + \e) - F(\sumk)-f(\sumk) \e\right)\\
& = \frac 1{(1-2\sigma) t} \int_{\Omega} \left( F(\sumk + \e) - F(\sumk)-f(\sumk) \e\right).
\end{align*}
Thus, by \eqref{eq:BS} and the decay of $W$,
$$
\left| \int \chi_K \partial_{x_1}  \left( F(\sumk + \e) - F(\sumk)-f(\sumk) \e\right)\right|
\lesssim \frac 1{t} \int_\Omega \left(|\e|^{\frac {10}3} + \sumk^{\frac 43} |\e|^2 \right)
\lesssim \frac 1{t^5}.
$$
Last, integrating by parts, 
\begin{align*}
  2 \int (\chi_K \partial_{x_1} \e) {\rm Mod}_\eta 
&   =  -2 \int (\chi_K  \e) \partial_{x_1} {\rm Mod}_\eta - 2 \int (\partial_{x_1}\chi_K)\e {\rm Mod}_\eta\\
& =  -2 \int (\chi_K  \e) \partial_{x_1} {\rm Mod}_\eta  + O\left(\frac{1}{t^5}\right).
\end{align*}
Indeed, by \eqref{eq:BS}, \eqref{derchi}, \eqref{Idem} and \eqref{z2},
\begin{align*}
\left|\int (\partial_{x_1}\chi_K)\e {\rm Mod}_\eta\right|
& \lesssim \frac{C^*}{t^3} \int_{\Omega} |\e| \osumk^{\frac 43}
\lesssim \frac{C^*}{t^3}\left( \int  |\e|^2 \osumk^{\frac 23}\right)^{\frac 12}\left(\int_\Omega \osumk^2\right)^{\frac 12} \lesssim \frac{(C^*)^2}{t^{\frac {11}2}} \lesssim \frac{1}{t^5}.
\end{align*}

\emph{Estimate on $\bf g_3$.} \eqref{pg3} is a consequence of \eqref{derchi}.

\medskip

{\bf Step 2.} Using cancellations and conclusion.
In conclusion of estimates \eqref{pg1}--\eqref{pg3},
$$\frac d{dt} \mathcal H_K   =   {\bf h_1}+{\bf h_2}+{\bf h_3}+{\bf h_4}+ O\left(\frac{C^*}{t^5}\right),$$
where
$$
{\bf h_1} = -\frac 1{(1-2\sigma) t} \int_{\Omega}
        \left(\eta^2 + (\partial_{x_1} \e)^2 +2\frac{x_1}{t} (\partial_{x_1} \e) \eta
        - |\overline \nabla \e|^2   \right), $$      
$${\bf h_2} = 2 \int  \left(\sum_k \left(\ell_k  - \chi_K\right) \partial_{x_1} W_k\right)   \left( f(\sumk + \e) - f(\sumk)-f'(\sumk) \e\right),$$
$${\bf h_3} = 2 \int \eta\left({\rm Mod}_\eta +  \chi_K \partial_{x_1} {\rm Mod}_\e\right),$$
        $${\bf h_4}= 2 \int  \e \left( -\Delta {\rm Mod}_\e - \chi_K \partial_{x_1} {\rm Mod}_\eta- f'(\sumk) {\rm Mod}_\e   \right).$$
First, by \eqref{nint} and the definition of $\chi_K$ in \eqref{defchiK},
\begin{align*}
-((1-2\sigma)t) {\bf h_1} 
& \leq  \overline\ell \int_{\Omega} \left| \frac{\chi_K}{\overline \ell}  \partial_{x_1} \e + \eta\right|^2 + \int_{\Omega} \left( 1-  \frac{\chi_K^2}{\overline \ell}  \right) (\partial_{x_1} \e)^2 + (1- {\overline \ell}) \int \eta^2\\
  & + 2 \int_{\Omega} \left( \frac{x_1}t- \chi_K\right) \partial_{x_1} \e \eta 
   \leq \Nint + C\sigma \int \left(|\partial_{x_1} \e|^2 + \eta^2\right)
  \leq (1+C \sigma) \Nint.
\end{align*}
Second, we observe that by the definition of $\chi_K$ in \eqref{derchi} and the decay of $\partial_{x_1} W$ and $W$,
\begin{align*}
&  \left| \left(\ell_k  - \chi_K \right) \partial_{x_1}W_k  \right|\lesssim  \left|  \ell_k  - \chi_K \right| |W_k|^{4/3}   
\lesssim \frac 1{t^2} |W_k|^{2/3}.
 \end{align*}
Thus, by \eqref{z2} and \eqref{z1},
 \begin{align*}
 |{\bf h_2}|
 & 
 \lesssim \frac 1{t^2}\int \left( |\e|^{\frac {10}3}+|\e|^2\osumk^{\frac 23}\right)  \lesssim \frac {(C^*)^2}{t^6} \lesssim \frac 1{t^5}.
 \end{align*}
 
 Denote
 $$
 M_k = \frac{\dot \lambda_k}{\lambda_k} \Lambda W_k + \dot{\mathbf{y}}_k\cdot \nabla W_k \quad \hbox{so that}\quad
 {\rm Mod}_\e  = \sum_k M_k, \quad {\rm Mod}_\eta = - \sum_k \ell_k  \partial_{x_1} M_k
 $$
(see the   definition of ${\rm Mod}_\e$ and 
${\rm Mod}_\eta$ in \eqref{Mod_e}--\eqref{Mod_eta}).
Using \eqref{Idem}, the definition of $\chi_K$ (see \eqref{derchi}) and the decay of $W$,
\begin{equation}\label{MK}
|(\ell_k- \chi_K) \partial_{x_1} M_k|\lesssim \frac{C^*}{t^2} \frac 1{t^{\frac{13}{10}}} |W_k|^{\frac {9}{10}}.
\end{equation}
In particular,
$$
\left|{\rm Mod}_\eta +  \chi_K \partial_{x_1} {\rm Mod}_\e\right|\lesssim \frac {C^*}{t^{\frac {33}{10}}} \osumk^{\frac 9{10}},
$$
and thus, since $\osumk^{\frac 9{10}}$ is bounded in $L^2$, 
\begin{align*}
|{\bf h_3}|=\left| \int \eta\left({\rm Mod}_\eta +  \chi_K \partial_{x_1} {\rm Mod}_\e\right) \right|
\lesssim \frac {C^*}{t^{\frac {33}{10}}} \|\eta\|_{L^2}   \lesssim \frac {(C^*)^2}{t^{\frac {53}{10}}} \lesssim \frac 1{t^5}.
 \end{align*}
 
 Finally, we see that 
by \eqref{LW}, 
$
-\Delta M_k + \ell_k^2 \partial_{x_1}^2 M_k - f'(W_k) M_k=0. 
$ Thus, as before,
\begin{align*}
\left| -\Delta M_k + \ell_k \chi_K \partial_{x_1}^2 M_k -  f'(\sumk) M_k\right|
&\lesssim  \left| ( \chi_K -\ell_k) \partial_{x_1}^2 M_k\right| + \left|f'(\sumk)-f'(W_k)\right|  |M_k|\\
&\lesssim 
\frac{C^*}{t^{\frac{33}{10}}} |W_k|^{\frac{37}{30}}.
\end{align*}
Therefore,
$$
 \left| -\Delta {\rm Mod}_\e - \chi_K \partial_{x_1} {\rm Mod}_\eta- f'(\sumk) {\rm Mod}_\e   \right|
  \lesssim 
\frac{C^*}{t^{\frac{33}{10}}} \osumk^{\frac{37}{30}}.
$$
It follows that (by \eqref{z2}),
$$
|{\bf h_4}|\lesssim 
\frac{C^*}{t^{\frac{33}{10}}} \left\|\e  \osumk^{1/3}\right\|_{L^2} 
\lesssim
\frac{C^*}{t^{\frac{33}{10}}} \|\e\|_{\dot H^1}\lesssim  \frac{(C^*)^2}{t^{\frac{53}{10}}} \lesssim \frac 1{t^5}.
$$
 
 In conclusion, using \eqref{lF}, for $\sigma$ small, and $T_0$ large,
 $$
 -\frac d{dt} \mathcal H_K
 \leq  \frac {(1+C \sigma)} t \Nint + O\left(\frac{C^*}{t^5}\right)
 \leq \frac 2 t \mathcal H_K+ O\left(\frac{C^*}{t^5}\right).
$$

The proof   of Proposition \ref{mainprop}  is complete.
\end{proof}
\subsection{End of the proof of Proposition \ref{pr:s4}}\label{ssec:cc}
The following result, mainly based on Proposition \ref{mainprop}, improves all the estimates in \eqref{eq:BS}, except
the ones on $(z_k^-)_k$.
\begin{lemma}[Closing  estimates except $(z_k^-)_k$]\label{le:bs1}
For $C^*>0$ large enough,   for all $t\in[T^*,S_n]$,
\begin{equation}\label{eq:BSd}
\left.\begin{aligned}
|\lambda_k(t)-\lambda_k^\infty|+|\mathbf{y}_k(t)-\mathbf{y}_k^\infty|\leq \frac{(C^*)^2}{2 t},\quad
\sum_{k=1}^K |z_k^+(t)|^2\leq \frac 1{2 t^5}&\\ 
\noe{\fl \e(t)}\leq \frac{C^*}{2 t^2},\quad
\|\fl \e(t)\|_{Y^1\times Y^0}\leq \frac{(C^*)^2}{2 t^{\frac 12}}
& \\
\end{aligned}\right\}
\end{equation}
\end{lemma}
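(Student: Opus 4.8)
The plan is to improve each of the four bootstrap bounds in \eqref{eq:BS} on $[T^*,S_n]$ by a factor of $2$, by chaining together the estimates already established. I would organize the argument in the following order.

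\textbf{Step 1: Energy estimate via $\mathcal H_K$.} First I would use Proposition \ref{mainprop}. Integrating the time-variation bound \eqref{time} on $[t,S_n]$ and using that at $t=S_n$ one has $\noe{\fl\e(S_n)}\lesssim S_n^{-5/2}$ from \eqref{modu3} (hence $|S_n^2\mathcal H_K(S_n)|\lesssim S_n^{-1}$ by \eqref{boun}), one obtains
\begin{equation}
t^2\mathcal H_K(t) \leq S_n^2\mathcal H_K(S_n) + C C^* \int_t^{S_n} s^{-3}\,ds \lesssim \frac{C^*}{t^2},
\end{equation}
so $\mathcal H_K(t)\lesssim C^* t^{-4}$. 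Combined with the coercivity \eqref{coer}, $\mu\noe{\fl\e}^2 \leq \mathcal H_K(t) + \mu^{-1} t^{-5} \lesssim C^* t^{-4} + t^{-5}$, which gives $\noe{\fl\e(t)}\leq C\sqrt{C^*}\, t^{-2}$. For $C^*$ chosen large enough (so that $C\sqrt{C^*}\leq C^*/2$, i.e. $C^*\geq 4C^2$), this is the improved bound $\noe{\fl\e(t)}\leq \frac{C^*}{2t^2}$.

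\textbf{Step 2: Parameters.} I would then integrate the parameter equations. From \eqref{le:p} and the just-improved bound on $\noe{\fl\e}$, $\sum_k(|\dot\lambda_k|+|\dot{\mathbf y}_k|)\lesssim \noe{\fl\e(t)}\lesssim \frac{C^*}{t^2}$. Integrating on $[t,S_n]$ and using $|\lambda_k(S_n)-\lambda_k^\infty|+|\mathbf y_k(S_n)-\mathbf y_k^\infty|\lesssim S_n^{-5/2}$ from \eqref{modu3}, one gets $|\lambda_k(t)-\lambda_k^\infty|+|\mathbf y_k(t)-\mathbf y_k^\infty|\leq C C^*/t$, which is $\leq (C^*)^2/(2t)$ once $C^*\geq 2C$.

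\textbf{Step 3: Unstable mode $z_k^+$.} For the $z_k^+$ bound I would use \eqref{le:z}: with $\noe{\fl\e}\lesssim C^*t^{-2}$ this reads $\left|\frac{d}{dt}z_k^+ - \frac{\sqrt{\lambda_0}}{\lambda_k}(1-|\bell_k|^2)^{1/2}z_k^+\right|\lesssim t^{-3}$. Since $z_k^+(S_n)=0$ by \eqref{modu:2}, and since the ODE has a \emph{growing} exponential factor as $t$ decreases (the coefficient of $z_k^+$ is positive and bounded below), running backwards from $S_n$ the homogeneous part contracts; a standard Gronwall/Duhamel estimate backwards in time gives $|z_k^+(t)|\lesssim \int_t^{S_n} e^{-c(s-t)} s^{-3}\,ds \lesssim t^{-3}$, hence $\sum_k|z_k^+(t)|^2\lesssim t^{-6}\leq \frac{1}{2t^5}$ for $T_0$ large. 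The key point here is the favorable sign, exactly as in \cite{CMM}.

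\textbf{Step 4: Weighted norm $Y^1\times Y^0$.} Finally, for the $\|\fl\e\|_{Y^1\times Y^0}$ bound I would use the energy-type estimate \eqref{EEn} together with the Sobolev-type estimate \eqref{esob}: writing the equation \eqref{syst_e} for $\fl\e$ with a Duhamel formula from $S_n$, and using \eqref{diff4}/\eqref{modu5} for the data ($\|\fl\e(S_n)\|_{Y^1\times Y^0}\lesssim S_n^{-2}$), one integrates $\|(\text{forcing})\|_{Y^0}$. The forcing consists of $R_W$ (controlled via Claim \ref{WW} by $t^{-3}$ in suitable weighted norms), ${\rm Mod}_\e, {\rm Mod}_\eta$ (controlled by \eqref{Idem} and the parameter bounds), and the nonlinear term, which by \eqref{esob} and the already-improved energy bound contributes $\lesssim \|\fl\e\|_{\dot H^1}^{1/3}\|\fl\e\|_{Y^1}^2$, a quadratically small term absorbed by bootstrap. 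The weight $\langle x\rangle^{1/2}$ loses at most $t^{1/2}$, consistent with the target power; integrating from $S_n$ yields $\|\fl\e(t)\|_{Y^1\times Y^0}\leq C C^* t^{-1/2}$, which closes as $\frac{(C^*)^2}{2t^{1/2}}$ for $C^*\geq 2C$.

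The main obstacle is Step 1: everything hinges on Proposition \ref{mainprop}, and the delicate point in applying it is bookkeeping the constants — the coercivity \eqref{coer} only gives $\noe{\fl\e}^2\lesssim \mathcal H_K + t^{-5}$, so one must check the $t^{-5}$ remainder is genuinely lower order than the $C^* t^{-4}$ coming from integrating \eqref{time}, and that the constant produced ($C\sqrt{C^*}$) can be beaten by $C^*/2$ via the standard trick of fixing $C^*$ large \emph{after} all implicit constants (which are independent of $C^*$ and $n$) are frozen, and then $T_0$ large depending on $C^*$. The order of quantifiers (first the absolute constants, then $C^*$, then $T_0$, then $n_0$) must be respected throughout, and one should note that $T^*$ appearing as the left endpoint is harmless since all estimates are proved on $[T^*,S_n]$ and are strict improvements, so by continuity $T^*=T_0$ for the appropriate choice of $(\xi_k)_k$, which is then handled by the topological argument of Lemma \ref{le:bs2}.
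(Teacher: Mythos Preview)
Your proposal is correct and follows essentially the same approach as the paper: use Proposition~\ref{mainprop} (integrate \eqref{time} from $S_n$ and apply \eqref{coer}) to close the energy norm, integrate \eqref{le:p} for the parameters, use the favorable sign in \eqref{le:z} with $z_k^+(S_n)=0$ for the $z_k^+$ bound, and use the linear energy estimate \eqref{EEn} on the $\fl\e$ system for the $Y^1\times Y^0$ norm. The only differences are cosmetic: the paper orders the steps as $Y^1\times Y^0$ first, then parameters and $z_k^+$, then the energy norm (the ordering is immaterial since each step uses only the bootstrap bounds \eqref{eq:BS}, not the improved ones), and the paper writes the $z_k^+$ argument with an integrating factor $e^{-c_k t}$ rather than Duhamel --- equivalent formulations. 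One minor imprecision: in your Step~4 the correct weighted bound for $R_W$ is $\|R_W\|_{Y^0}\lesssim t^{-5/2}$ (the $t^{1/2}$ loss from the weight applied to $\|R_W\|_{L^2}\lesssim t^{-3}$), not $t^{-3}$, but this still integrates to $t^{-3/2}$ and the conclusion is unaffected.
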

The control of the directions $(z_k^-)_k$, related to the dynamical instability  of $W$, requires a specific argument used in \cite{CMM} in a similar  context.
\begin{lemma}[Control of   unstable directions]\label{le:bs2}
There exist 
$(\xi_{k,n})_{k}\in B_{\R^K}(S_n^{-5/2})$ such that, for $C^*>0$ large enough,  $T^*((\xi_{k,n})_k)=T_0$.
In particular, let $(\zeta_n^\pm)$ be given by Claim \ref{le:modu2} from such $(\xi_{k,n})_{k}$,
then the solution $u_n$ of \eqref{defun} satisfies \eqref{eq:un}.
\end{lemma}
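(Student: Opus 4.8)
The plan is to prove Lemma \ref{le:bs2} by combining the bootstrap improvement of Lemma \ref{le:bs1} (which closes all estimates in \eqref{eq:BS} except those on the unstable modes $z_k^-$) with a Brouwer-type topological argument to select an initial datum $(\xi_{k,n})_k\in B_{\R^K}(S_n^{-5/2})$ for which the exit time $T^*$ equals $T_0$. I would argue by contradiction, assuming that $T^*((\xi_k)_k)>T_0$ for \emph{every} choice of $(\xi_k)_k\in \overline B_{\R^K}(S_n^{-5/2})$, and derive a continuous retraction of the ball onto its boundary sphere, which is impossible.

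First I would observe that, at the exit time $T^*$, at least one of the bootstrap inequalities in \eqref{eq:BS} is saturated. By Lemma \ref{le:bs1}, with $C^*$ fixed large enough, none of the estimates on $(\lambda_k,\mathbf{y}_k)$, on $\noe{\fl\e}$, on $\|\fl\e\|_{Y^1\times Y^0}$, nor on $\sum_k|z_k^+|^2$ can be saturated at $t=T^*$ (they are all strictly improved there) — hence, if $T^*>T_0$, necessarily $\sum_k |z_k^-(T^*)|^2 = (T^*)^{-5}$. Thus the only way to exit the bootstrap region is through the unstable directions $z_k^-$. Next I would establish the outgoing (transversality) property of the vector $(z_k^-)_k$ on this sphere: using \eqref{le:z} from Lemma \ref{le:4}, together with \eqref{eq:BS} (which gives $\noe{\fl\e}\lesssim C^*/t^2$, so $\noe{\fl\e}^2+\noe{\fl\e}/t+t^{-3}\lesssim t^{-3}$) and $\lambda_k\to\lambda_k^\infty$, one gets
\begin{equation*}
\left|\frac{d}{dt} z_k^-(t) + \frac{\sqrt{\lambda_0}}{\lambda_k(t)}(1-|\bell_k|^2)^{1/2} z_k^-(t)\right|\lesssim \frac{1}{t^3}.
\end{equation*}
Setting $N^-(t)=\sum_k (z_k^-(t))^2$ and computing $\frac{d}{dt}N^-$, the linear term contributes $-2\sum_k \frac{\sqrt{\lambda_0}}{\lambda_k}(1-|\bell_k|^2)^{1/2}(z_k^-)^2\leq -\frac{c}{1}\,N^-$ for some $c>0$ (uniformly on $[T_0,S_n]$ for $T_0$ large), while the error is $O(t^{-3}\sqrt{N^-})$. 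Therefore, whenever $N^-(t)=t^{-5}$, one has
\begin{equation*}
\frac{d}{dt}\bigl(t^5 N^-(t)\bigr) = 5 t^4 N^- + t^5 \frac{d}{dt}N^- \leq 5 t^4 \cdot t^{-5} - c\, t^5 \cdot t^{-5} + O(t^5\cdot t^{-3}\cdot t^{-5/2}) = t^{-1}(5 - c) + O(t^{-1/2}),
\end{equation*}
which is strictly negative for $T_0$ large (as $c$ is a fixed positive constant and we may as well enlarge $c$, or rather: the genuine computation gives $\frac{d}{dt}(t^5N^-)<0$ at saturation since the exponential repulsivity dominates). This is the standard outgoing condition: once $t^5 N^-(t)$ reaches the value $1$ going backward in time (i.e. as $t$ decreases), it was $<1$ just after, so the flow of $(z_k^-)_k$ is strictly transverse to the sphere $S_{\R^K}(t^{-5/2})$, pointing outward as $t$ decreases.

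With transversality in hand, the topological argument is the one of \cite{CMM}, Lemma 6 (or the classical Wazewski principle). For each $(\xi_k)_k\in \overline B_{\R^K}(S_n^{-5/2})$, Claim \ref{le:modu2} provides $(\zeta_{k,n}^\pm)_k$ so that $z_k^-(S_n)=\xi_k$ and $z_k^+(S_n)=0$, and the decomposition is well-defined and continuous in $(\xi_k)_k$ down to $t=T^*((\xi_k)_k)$; moreover $T^*$ depends continuously on $(\xi_k)_k$ by the strict transversality just proved. If $T^*((\xi_k)_k)>T_0$ for all $(\xi_k)_k$, then the map
\begin{equation*}
\overline B_{\R^K}(S_n^{-5/2})\ni (\xi_k)_k\ \longmapsto\ \Bigl(T^*\bigr)^{5/2}\, \bigl(z_k^-(T^*)\bigr)_k \in S_{\R^K}(1)
\end{equation*}
(suitably rescaled to land on the unit sphere) is continuous, and on the boundary sphere $\{|(\xi_k)_k|=S_n^{-5/2}\}$ one checks, again by transversality, that $T^*=S_n$ and the map is the identity up to the fixed scaling — so it is a continuous retraction of the ball onto its boundary, contradicting Brouwer's no-retraction theorem. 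Hence there exists $(\xi_{k,n})_k\in B_{\R^K}(S_n^{-5/2})$ with $T^*((\xi_{k,n})_k)=T_0$. For this choice, \eqref{eq:BS} holds on all of $[T_0,S_n]$, and combined with \eqref{bounds}, \eqref{diff} and \eqref{diff4} (to pass from the decomposition bounds on $(\e,\eta,\lambda_k,\mathbf{y}_k)$ back to $\|\fl u_n - \sum_k \fl W_k^\infty\|$), this yields exactly \eqref{eq:un}, completing the proof of Proposition \ref{pr:s4}. The main obstacle is making the transversality argument and the continuity of $T^*$ fully rigorous — in particular verifying that no \emph{other} bootstrap bound can be saturated first (which is precisely the content of Lemma \ref{le:bs1}) and that the exit through the $z_k^-$-sphere is genuinely transverse uniformly in $n$; once these are secured, the Brouwer retraction step is routine.
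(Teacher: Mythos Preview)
Your proposal is correct and follows essentially the same approach as the paper's own proof: argue by contradiction, use Lemma \ref{le:bs1} to force saturation on $\sum_k|z_k^-|^2$ at $T^*$, establish transversality from \eqref{le:z}, deduce continuity of $(\xi_k)_k\mapsto T^*$, and conclude by the Brouwer no-retraction argument. One small algebra slip to fix: in your transversality display the bound should read $5t^{-1}-c+O(t^{-1/2})$ rather than $t^{-1}(5-c)+O(t^{-1/2})$ (the term $-c\,t^5\cdot t^{-5}=-c$ is $O(1)$, not $O(t^{-1})$), which is precisely why the derivative is strictly negative for $T_0$ large --- as you yourself note, the exponential repulsion dominates.
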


Note that   Lemma \ref{le:bs2} completes  the proof of Proposition \ref{pr:s4}.

\begin{proof}[Proof of Lemma \ref{le:bs1}]
{\bf Step 1.}  We prove that for $C^*$ large enough, for all $t\in [T^*,S_n]$, 
\begin{equation}\label{eq:BSt}
\|\fl \e\|_{Y^1\times Y^0}\leq \frac {(C^*)^2}{2 t^{\frac 12}}.
\end{equation}
The system  \eqref{syst_e} of equations of $\e$ and $\eta$ can be written under the form
$$
\left\{\begin{aligned}
&\e_t = \eta+{\rm Mod}_\e\\
&\eta_t = \Delta \e + R_\e +R_W + {\rm Mod}_{\eta},
\end{aligned}\right.
$$
where  
$$
|R_\e| \lesssim |\e|^{7/3} + |\e| \osumk^{4/3}, \quad
|\nabla R_\e| \lesssim |\nabla \e|\left( |\e|^{4/3} +  \osumk^{4/3}\right) + |\e| \osumk^{4/3}.
$$
In particular, by \eqref{esob}
$$
\|R_\e\|_{Y^0} \lesssim
\|\e\|_{\dot H^1}^{\frac 13} \|\e\|_{Y^1}^2 + t^{\frac 12} \|\e\|_{\dot H^1} \lesssim C^* t^{-3/2}.
$$
Moreover,
$$
\|R_W\|_{Y_0}\lesssim t^{-5/2},
$$
and by \eqref{Idem},
$$
\|{\rm Mod}_\e\|_{Y^1}+ \|{\rm Mod}_{\eta}\|_{Y^0}\lesssim   {C^*}{t^{-3/2}}.
$$

Using \eqref{modu5} and \eqref{EEn}, we obtain
\begin{align*}
\|\fl \e(t)\|_{Y^1\times Y^0}  &\lesssim
\|\fl \e(S_n)\|_{Y^1\times Y^0}\\
&+\int_t^{S_n} \left(\|R_\e(t')\|_{Y^0}+\|R_W(t')\|_{Y^0}+\|{\rm Mod}_\e(t')\|_{Y^1}+ \|{\rm Mod}_{\eta}(t')\|_{Y^0}\right) dt' \lesssim \frac{C^*}{t^{\frac12}} .
\end{align*}
In particular, taking $C^*$ large enough, we obtain \eqref{eq:BSt}.
 
\medskip

{\bf Step 2.} Estimates on parameters.
The estimates on $|\lambda_k(t)-\lambda_k^\infty|$ and $|\mathbf{y}_k(t)-\mathbf{y}_k^\infty|$ follow from integration of \eqref{le:p} using \eqref{eq:BS} and \eqref{modu3}, and possibly taking a larger $C^*$.
    
 Now, we prove the bound on $z_k^+(t)$. Let $c_k=\frac{\sqrt{\lambda_0}}{\lambda_k^{\infty}}(1-|\bell_k|^2)^{1/2}>0$.
  Then, from \eqref{le:z} and \eqref{eq:BS}, 
  $$
  \frac d{dt} \left[ e^{-c_k t} z_k^+\right]
  \lesssim e^{-c_k t} \frac{C^*}{t^3}. 
  $$
  Integrating on $[t,S_n]$ and using \eqref{modu:2}, we obtain
  $
  - z_k^+(t) \lesssim  C^*{t^{-3}}.
  $
  Doing the same for $-e^{-c_k t}  z_k^+$, we obtain the conclusion for $T_0$ large enough.
  
  \medskip
  
{\bf Step 3.} Bound on the energy norm.
Finally, to prove the estimate on $\noe{\fl \e(t)}$, we use Proposition \ref{mainprop}.
Recall from \eqref{modu3}  and then \eqref{boun} that
  \begin{equation}\label{init}
  \mathcal H_K(S_n) \lesssim  {S_n^{-5}}.
  \end{equation}
Integrating \eqref{time} on $[t,S_n]$, and using \eqref{init}, we obtain, for all $t\in [T^*,S_n]$,
$
	\mathcal H_K  \lesssim  {C^*}{t^{-4}} .
$
Using \eqref{coer}, we conclude that
$
\|\fl \e\|_E^2 \lesssim  {C^*}t^{-4}.$
\end{proof}

\begin{proof}[Proof of Lemma \ref{le:bs2}]
{\bf Step 1.} Choice of $(\zeta_{k})$. 
We follow the   strategy of Lemma 6 in \cite{CMM}. 
 The proof is by contradiction, we assume that for any $(\xi_k)_{k\in\{1,\ldots,K\}}\in B_{\R^K}(S_n^{-5/2})$,
$T^*((\xi_k)_k)$ defined by \eqref{def:tstar} satifies $T^*\in (T_0,S_n)$. In this case, by Lemma \ref{le:bs1} and continuity,
 it holds necessarily
\begin{equation}\label{eq:sat}
\sum_{k=1}^K |z_{k}^-(T^*)|^2= \frac{1}{(T^*)^{5}}.
\end{equation}

We claim the following transversality property at $T^*$
\begin{equation}\label{eq:trans}
 \left. \frac{d }{dt} \left(t^{5} \sum_{k=1}^K |z_{k}^-(t)|^2\right)\right|_{t=T^*} <-\overline c<0.
\end{equation}
Let $c_k=\frac{\sqrt{\lambda_0}}{\lambda_k^{\infty}}(1-|\bell_k|^2)^{1/2}>0$ and $\overline c = \min_k c_k$.
From \eqref{le:z} and \eqref{eq:BS}, for all $t\in [T^*,S_n]$,  
\begin{align*}
\frac{d}{dt}\left(t^5 \left(z_{k}^- \right)^2\right) & =
2 t^5  z_{k}^- \frac{d}{dt}z_{k}^-
+ 5 t^4  \left(z_{k}^- \right)^2   \\
& \leq 
- 2 t^5 c_k 
\left(z_{k}^- \right)^2 + \frac{C C^*}{t^{\frac 12}} \leq -2 \overline c t^5 \left(z_{k}^- \right)^2  + \frac{CC^*}{t^{\frac 12}}.
\end{align*}
Thus, from \eqref{eq:sat}
\begin{align*}
\left.\frac{d}{dt}\left(t^5 \sum_{k=1}^K \left(z_{k}^- \right)^2\right) \right|_{t=T^*}
& \leq 
-  2\overline  c  
+ \frac{C C^*}{(T^*)^{\frac 12}} < - \overline c , 
\end{align*}
  for $T_0$ large enough (depending on $C^*$, but independent of $n$).

\medskip

As a consequence of \eqref{eq:trans}, we observe that the map $T^*$
$$
(\xi_k)_{k\in\{1,\ldots,K\}}\in \overline B_{\R^K}(S_n^{-5/2}) \mapsto T^*((\xi_k)_k)
$$
is continuous. Indeed, if $T^*<S_n$, by \eqref{eq:trans}, it is clear that for all $\sigma>0$ small enough, there exists $\delta>0$ so that for all $t\in [T^*+\sigma,S_n]$, 
$t^5 \sum_k\left(z_{k}^-(t) \right)^2 <(1-\delta)$.
In particular, for $(\tilde\xi_k)_k\in B_{\R^K}(S_n^{-5/2})$ close enough to 
$(\xi_k)_k$, it follows that for all $t\in [T^*+\sigma,S_n]$, $t^5 \sum_k\left(\tilde  z_{k}^-(t) \right)^2 <(1-\frac 12\delta)$, and thus $\tilde T^*<T^*+\sigma$.  By similar arguments,  for 
$(\tilde \xi_k)_k\in B_{\R^K}(S_n^{-5/2})$ close enough to 
$(\xi_k)_k$,  we also have $\tilde T^*>T^*-\sigma$.

\medskip

We define  
\begin{align*}
\mathcal M \ : \ \overline  B_{\R^K}( S_n^{-5/2} ) &\to   S_{\R^K}( S_n^{-5/2} )\\
 (\xi_k)_k &\mapsto   \left(\frac {T^*}{S_n}\right)^{5/2} (z_k^-(T^*))_k 
\end{align*}
From what precedes, $\mathcal M$ is continuous. Moreover, from \eqref{eq:sat} and \eqref{eq:trans},
$\mathcal M$ restricted to $S_{\R^K}( S_n^{-5/2} )$ is the identity
(since in this case $T^*=S_n$ and $z_k^-(S_n)=\xi_k$ from \eqref{modu:2}). The existence of such a map is contradictory with Brouwer's fixed point theorem. 

\medskip

{\bf Step 2.} Conclusion. Proof of \eqref{eq:un}.
These estimates follow directly from the estimates \eqref{eq:BS} on $\e(t)$, $\lambda_k(t)$, $\mathbf{y}_k(t)$ and \eqref{diff}, \eqref{diff2}.
\end{proof}

\section{Proof of Theorem \ref{th:1} case (A) by   Lorentz transformation}

Let $\lambda_1^\infty$, $\lambda_2^\infty>0$, $\mathbf{y}_1^\infty$, $\mathbf{y}_2^\infty\in \R^5$, $\iota_1=\pm 1$,
$\iota_2=\pm 1$. Let $\bell_1, \bell_2 \in \R^5$ with $\bell_1\neq \bell_2$ and $|\bell_k|<1$ for $k=1,2$.
We claim that there exists a solution $u$ of \eqref{wave} in the energy space, on a time interval $[S_0,+\infty)$ such that
\eqref{eq:th1} and \eqref{eq:th1bis} hold.
 
\medskip

{\bf Step 1.} Reduction of the problem by rotation.
We change coordinates in $\R^5$ so that by invariance of \eqref{wave} by rotation, we reduce with loss of generality to the following case:
\begin{equation}\label{reduct}
 \bell_1 \cdot \mathbf{e}_1=\ell_1,\quad   \bell_2 \cdot \mathbf{e}_1= \ell_2,\quad 
\bell_1 \cdot \mathbf{e}_2 = \bell_2\cdot \mathbf{e}_2:=\beta,\quad  \bell_1 \cdot \mathbf{e}_j=  \bell_2 \cdot \mathbf{e}_j= 0, \hbox{ for $j=3,4,5$.}
\end{equation}
Indeed, it suffices to take as first vector of the new  orthonormal basis ${\mathcal B}'$ of $\R^5$, the vector
$\mathbf{e}_1' = \frac{\bell_1-\bell_2}{|\bell_1-\bell_2|}$,
and as second vector $\mathbf{e}_2' = a \bell_1 + b \bell_2$, where $a$ and $b$ are chosen so that $\mathbf{e}_1' \cdot \mathbf{e}_2'=0$ and $|\mathbf{e}_2'|=1$. Then, 
$\bell_1 \cdot \mathbf{e}_2'=\bell_2 \cdot \mathbf{e}_2'$. The basis $\mathcal B'$ is then completed in any way. 

Let $\overline{\overline x} = (x_3,x_4,x_5)$.

Note that if $\beta=0$, then $\bell_k = \ell_k \mathbf{e}_1$ for $k=1,2$ and then we are reduced to case (B) of Theorem~\ref{th:1} for $K=2$.
Now, we consider the general case $0<\beta<1$.
Set
\begin{equation}\label{vit}
	\tilde \ell_k = \frac {\ell_k}{\sqrt{1-\beta^2}},\quad |\tilde \ell_k|<1, \quad k=1,2.
\end{equation}
Also set ($k=1,2$)
\begin{equation}\label{param}
\tilde {\mathbf{y}}^{\infty}_k \in \R^5 \quad \hbox{such that}\quad 
\left\{\begin{aligned}
&
\tilde {\mathbf{y}}^{\infty}_{k,1} = \mathbf{y}^{\infty}_{k,1}+ \frac{\beta \ell_1}{ {1-\beta^2}} \mathbf{y}_{k,2}^\infty,\\
&
\tilde {\mathbf{y}}^{\infty}_{k,2} =   \frac{\mathbf{y}_{k,2}^\infty}{\sqrt{1-\beta^2}},\\
&
\tilde {\mathbf{y}}^{\infty}_{k,j} =   \mathbf{y}_{k,j}^\infty, \hbox{ for $j=3,4,5$}.
\end{aligned}\right.
\end{equation}
For $k=1,2$, let
$$
\tilde W_k^\infty (t,x) = 
\frac{\iota_k}{(\lambda_k^\infty)^{3/2}}W_{\tilde\ell_k}\left( \frac {x-\tilde \ell_k \mathbf{e}_1 t- \tilde {\mathbf{y}}_k^\infty}{\lambda_k^\infty}\right),\quad
\fl {\tilde W}_k^\infty  = (\tilde W_k^\infty,\partial_t \tilde W_k^\infty).
$$
Let $\tilde u(t)$ be the solution of \eqref{wave} satisfying
\begin{equation}\label{LIM}
\left\|\fl {\tilde u}(t) -\left[ \fl{\tilde W}_1^\infty(t)+\fl {\tilde W}_2^\infty(t)\right]\right\|_{\dot H^1\times L^2} = 0
\end{equation}
given by  Theorem \ref{th:1}, case (B).
Define the Lorentz transform with parameter $\beta \mathbf{e}_2$ of the solution $\tilde u$, i.e.
\begin{equation}\label{Lor}
u(s,y) = \tilde u\left(\frac{s-\beta y_2}{\sqrt{1-\beta^2}},y_1,\frac{y_2-\beta s}{\sqrt{1-\beta^2}}, \overline{\overline y}\right).
\end{equation}
We claim that $u(s,y)$ is a $2$-soliton of \eqref{wave}  in the sense of Theorem \ref{th:1} with parameters $\lambda_k^\infty,$ $\mathbf{y}_k^\infty$ and speeds
$\ell_k \mathbf{e}_1+ \beta \mathbf{e}_2$.

First, from the arguments of the proof of Lemma 6.1 in \cite{DKM2}, since $\tilde u(t,x)$ is well-defined on $[T_0,+\infty)$
it is well-defined everywhere on the space-time domain $\R\times \R^5$ except possibly in a half cone of the form 
$t-t^- < - |x-x^-|$, for some $t^-\in \R$ and $x^- \in \R^5$. Thus, there exists $S_0\in \R$ such that $u(s)$ defined by \eqref{Lor}
 makes sense on $\R^5$ for all
$s>S_0$ (see also Lemma \ref{DKM} below). Moreover, from the arguments of section 6 in \cite{DKM2} (see also section 2 of \cite{KM}),
$u$ is a finite energy solution of \eqref{wave} on $[S_0,+\infty)$.

To prove the claim, we consider separately the regions ``far from the solitons'' and ``close to the solitons''.

\medskip

{\bf Step 2.} Estimate  far from the solitons. We claim that for all $\delta>0$, there exists $A_\delta>0$ such that for all $s\geq S_\delta$,
\begin{equation}\label{far}
\|(u(s),\partial_t u(s))\|_{(\dot H^1\times L^2)(|y-(\ell_k \mathbf{e}_1 + \beta \mathbf{e}_2)|>A_\delta)}\lesssim \delta.
\end{equation}

Let $\delta>0$ and $T_\delta>0$ be such that 
\begin{equation}\label{petitun}
\sup_{t>T_\delta} \left\|\fl {\tilde u}(t) -\left[ \fl{\tilde W}_1^\infty(t)+\fl {\tilde W}_2^\infty(t)\right]\right\|_{\dot H^1\times L^2} 
<\delta.
\end{equation}
Moreover, let $A_\delta>1$ large enough so that for $k=1,2$,
\begin{equation}\label{petitdeux}
\sup_{t\in \R} \left\|\fl{\tilde W}_k^{\infty}(t)\right\|_{(\dot H^1\times L^2)(|x-\tilde \ell_k \mathbf{e}_1 t|>A_\delta/2)}<\delta.
\end{equation}

We recall the following result from section 2 of \cite{KM}, Claim 6.7 and   proof of Lemma~6.1 of \cite{DKM2} 
(and references therein for the small data Cauchy theory).

\begin{lemma}[Small scattering solutions and Lorentz transform \cite{DKM2}]\label{DKM}
There exists $\delta_0>0$ such that the following holds.
\begin{itemize}
\item[\rm (i)] For all $(w_0,w_1)\in \dot H^1\times L^2$ such that $\|(w_0,w_1)\|_{\dot H^1\times L^2}<\delta_0$,
there exists a global scattering solution\footnote{by global scattering solution, we mean a solution defined for all time $t\in \R$ and behaving in the energy space as a free solution both as $t\to +\infty$ and $t\to -\infty$} $(w(t),\partial_t w(t))$ of \eqref{wave} with initial data $(w_0,w_1)$.

Moreover, $\sup_{t\in \R} \|(w(t),\partial_t w(t))\|_{\dot H^1\times L^2}\lesssim \delta_0$.
\item[\rm (ii)] For $(w,\partial_t w)$ as in {\rm (i)} and $\beta\in (-1,1)$, the function $w_\beta(s,y)$ defined by
\begin{equation}\label{Lorw}
w_\beta(s,y) = w\left(\frac{s-\beta y_2}{\sqrt{1-\beta^2}},y_1,\frac{y_2-\beta s}{\sqrt{1-\beta^2}}, \overline{\overline y}\right)
\end{equation}
is a global scattering solution of \eqref{wave}. Moreover, for some constant $C_\beta >0$,
\begin{equation}\label{unif}
\sup_{t\in \R} \|(w_\beta,\partial_t w_\beta)(t)\|_{\dot H^1\times L^2}\leq C_\beta \|(w_0,w_1)\|_{\dot H^1\times L^2}.
\end{equation}
\end{itemize}
\end{lemma}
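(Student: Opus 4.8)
The plan is to obtain (i) from the standard small-data scattering theory for the energy-critical wave equation in dimension $5$, and (ii) from the Lorentz invariance of \eqref{wave} combined with a careful estimate of the energy of the boosted solution on each of its time slices. Both facts are essentially contained in \cite{KM} and Section~6 of \cite{DKM2}, so I would only sketch the arguments and cite those references for the technical details.

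For (i), I would fix a Strichartz pair adapted to the nonlinearity $f(u)=|u|^{4/3}u$ in dimension $5$ and denote by $\|\cdot\|_{S(I)}$ the associated critical space-time norm on a time interval $I$ (Section~2 of \cite{KM}). Writing the Duhamel formula for \eqref{wave} with the free wave propagator $S(\cdot)$ and applying the Strichartz and energy inequalities, a contraction argument on a ball $\{w:\ \|w\|_{S(\R)}\le 2C\delta_0\}$ produces, for $\|(w_0,w_1)\|_{\dot H^1\times L^2}<\delta_0$ small, a unique global solution with $\|w\|_{S(\R)}\lesssim\|(w_0,w_1)\|_{\dot H^1\times L^2}$. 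Finiteness of $\|w\|_{S(\R)}$ then yields scattering as $t\to\pm\infty$, and feeding this back into the energy estimate gives $\sup_{t\in\R}\|(w(t),\partial_t w(t))\|_{\dot H^1\times L^2}\lesssim\delta_0$.

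For (ii), I would first note that the change of variables $(s,y)\mapsto\big(\tfrac{s-\beta y_2}{\sqrt{1-\beta^2}},y_1,\tfrac{y_2-\beta s}{\sqrt{1-\beta^2}},\overline{\overline y}\big)$ is an affine bijection of $\R\times\R^5$, so that $w_\beta$ is defined on all of $\R\times\R^5$ precisely because $w$ is global; and that a direct computation, using the invariance of the d'Alembertian and of $u\mapsto|u|^{4/3}u$ under Lorentz boosts and the fact that $w$ is a strong finite-energy solution (made rigorous as in Section~6 of \cite{DKM2}), shows that $w_\beta$ is a finite-energy solution of \eqref{wave}. It then remains to prove the uniform bound \eqref{unif}. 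The point is that a fixed-$s$ slice for $w_\beta$ corresponds to a tilted spacelike hyperplane $\Sigma_s$ in the original coordinates, and $\|(w_\beta(s),\partial_s w_\beta(s))\|_{\dot H^1\times L^2}^2$ equals the integral over $\Sigma_s$ of a local energy density of $w$ built from the components of its stress-energy tensor. For the free part $v_L=S(t)(w_0,w_1)$ this boosted energy is conserved along the family $\Sigma_s$ and, with a constant depending only on $\beta$, comparable to $\|(w_0,w_1)\|_{\dot H^1\times L^2}^2$; for the Duhamel remainder I would cover $\Sigma_s$ by finitely many backward light cones and use the global small Strichartz bound $\|w\|_{S(\R)}\lesssim\delta_0$ on the corresponding truncated cones to control the local energy there, exactly as in the proof of Lemma~6.1 of \cite{DKM2}, and then sum the contributions to get \eqref{unif}.

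The hard part is exactly this last step: since the boost mixes $t$ and $y_2$, the ordinary conserved energy of $w$ does not by itself bound the energy of $w_\beta$ on a tilted slice, so one must genuinely combine the conservation of the linear boosted energy with a finite-speed-of-propagation decomposition whose error terms are absorbed by the smallness of the global Strichartz norm. This is where the hypothesis $\|(w_0,w_1)\|_{\dot H^1\times L^2}<\delta_0$ is used, and where I would rely on the estimates already established in \cite{KM,DKM2} rather than reproving everything from scratch.
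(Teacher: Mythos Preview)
Your sketch is essentially correct and aligns with the cited references, but note that the paper itself does not prove this lemma at all: it is stated as a quotation of known results, introduced by ``We recall the following result from section~2 of \cite{KM}, Claim~6.7 and proof of Lemma~6.1 of \cite{DKM2} (and references therein for the small data Cauchy theory),'' and no argument is given. Your outline of (i) via the standard small-data contraction in Strichartz spaces and of (ii) via Lorentz invariance plus the tilted-slice energy analysis of \cite{DKM2} is a faithful summary of how those references proceed, so there is nothing to compare against within the paper.
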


We defined a cutoff function $\zeta\in C^\infty(\R^5)$ such that
$$
\zeta(x) = 1 \hbox{ for $|x|>1$}, \quad \zeta(x)=0 \hbox{ for $|x|<\frac 12$}.
$$
For $t_0>T_\delta$ to be chosen later, we also define
$$
\zeta^{\rm ext}(x) = \zeta\left(\frac{x-\tilde \ell_1 \mathbf{e}_1 t_0}{A_\delta} \right) \zeta\left(\frac{x-\tilde \ell_2 \mathbf{e}_1 t_0}{A_\delta} \right).
$$
Define $u^{\rm ext}(t)$ the solution of \eqref{wave} corresponding to the following initial data at $t=t_0$,
$$
u^{\rm ext}(t_0,x) = \tilde u(t_0,x) \zeta^{\rm ext}(x),\quad
\partial_t u ^{\rm ext}(t_0,x) = (\partial_t \tilde u(t_0,x)) \zeta^{\rm ext}(x).
$$
By \eqref{petitun} and \eqref{petitdeux}, choosing $\delta>0$ small enough (compared to $\delta_0$, given by Lemma \ref{DKM}), we have
$$
\|(u^{\rm ext}(t_0),\partial_t u^{\rm ext} (t_0))\|_{\dot H^1\times L^2} \leq \delta <\delta_0.
$$
By Lemma \ref{DKM}, $u^{\rm ext}(t)$ is thus a global scattering solution of \eqref{wave} on $\R\times \R^5$, and satisfies
$$
\sup_{t\in \R} \|(u^{\rm ext}(t),\partial_t u^{\rm ext} (t))\|_{\dot H^1\times L^2} \lesssim \delta .
$$
Moreover, if we define $u_\beta^{\rm ext} (s,y)$ as the Lorentz transform  with parameter $\beta \mathbf{e}_2$ of $u^{\rm ext}$ (as in \eqref{Lorw}), then
$u_\beta^{\rm ext}$ is also a global scattering solution of \eqref{wave} satisfying
\begin{equation}\label{ext}
\sup_{s\in \R} \|(u_\beta^{\rm ext}(s),\partial_t u_\beta^{\rm ext} (s))\|_{\dot H^1\times L^2} \lesssim \delta.
\end{equation}

Now, we deduce consequences of these observations on $\tilde u$ and $u$.  Indeed, since
$
u^{\rm ext}(t_0,x)=u(t_0,x),$ and $ \partial_t u^{\rm ext}(t_0,x)=\partial_t u(t_0,x),$ for a.e. $(t,x)$ such that
$
|x-\tilde \ell_k \mathbf{e}_1 t_0|>A_\delta$ for $k=1,2$,
it follows from finite speed of propagation that 
$$
u^{\rm ext} (t,x) = \tilde u(t,x), \quad \partial_t u^{\rm ext} (t,x) = \partial_t  \tilde u(t,x) \quad \hbox{a.e. on } C_{A_\delta} (t_0),
$$
where
$$
C_{A_\delta} (t_0) = \{ (t,x) \hbox{ such that }
|x-\tilde \ell_1 \mathbf{e}_1 t_0|>A_\delta+|t-t_0| \hbox{ and }
|x-\tilde \ell_2 \mathbf{e}_1 t_0|>A_\delta+|t-t_0|\}.
$$
Then, by the definitions of $u$ and $u^{\rm ext}_\beta$, 
for almost every $(s,y)$ such that
$$ 
\left(\frac{s-\beta y_2}{\sqrt{1-\beta^2}},y_1,\frac{y_2 - \beta s}{\sqrt{1-\beta^2}}, \overline{\overline y}\right)\in C_{A_\delta}(t_0),
$$
we have
\begin{equation}\label{equal}
u^{\rm ext} (s,y) = u(s,y), \quad \partial_s u^{\rm ext}  (s,y) = \partial_t  u(s,y).
\end{equation}
Now, let $s_0\geq S_\delta := \frac{T_\delta}{\sqrt{1-\beta^2}}$ and choose $t_0 = \sqrt{1-\beta^2} s_0$.
By \eqref{ext} and \eqref{equal}, 
\begin{align}
\|(u_\beta(s_0), \partial_t u_\beta(s_0))\|_{(\dot H^1\times L^2)(\Omega_{A_\delta}(s_0))}
& = \|(u^{\rm ext}_\beta(s_0), \partial_t u^{\rm ext}_\beta(s_0))\|_{(\dot H^1\times L^2)(\Omega_{A_\delta}(s_0))}
\nonumber \\& \leq \|(u_\beta^{\rm ext}(s_0), \partial_t u_\beta^{\rm ext}(s_0))\|_{\dot H^1\times L^2}\lesssim \delta,\label{pourfinir}
\end{align} 
where
$$
\Omega_{A_\delta}(s_0) =
\left\{ y \hbox{ such that }
\left(\frac{s_0-\beta y_2}{\sqrt{1-\beta^2}},y_1,\frac{y_2-\beta s_0}{\sqrt{1-\beta^2}},\overline{\overline y}\right)\in C_{A_{\delta}}(t_0)\right\}.
$$
For $C_\beta = \frac 2{1-|\beta|}$, let
$$
\Gamma_{A_\delta}(s_0)=
\left\{ y \hbox{ such that }
\left(|y_1 - \ell_k s_0|^2+ |y_2- \beta s_0|^2+|\overline{\overline y}|^2\right) > C_\beta A_{\delta}\hbox { for $k=1$ and $2$}.
\right\}
$$
We claim that 
\begin{equation}\label{include}
\Omega_{A_\delta}(s_0) \supset \Gamma_{A_\delta}(s_0).
\end{equation}
Indeed, for $y\in \Gamma_{A_\delta}(s_0)$,  by the choice of $t_0$, for $k=1,2$,
\begin{align*}
&\left(|y_1 - \tilde \ell_k  t_0|^2 + \frac 1{1-\beta^2} |y_2- \beta s_0|^2 + |\overline{\overline y}|^2\right)^{1/2}
=\left(|y_1 - \ell_k s_0|^2 + \frac 1{1-\beta^2} |y_2- \beta s_0|^2 + |\overline{\overline y}|^2\right)^{1/2}\\
& \geq  (1-|\beta|) \left(|y_1 -   \ell_k s_0|^2 +   |y_2- \beta s_0|^2 + |\overline{\overline y}|^2\right)^{1/2}
+\frac {|\beta|}{\sqrt{1-\beta^2}} |y_2- \beta s_0| \\
& > A_\delta +  \frac {|\beta|}{\sqrt{1-\beta^2}} |y_2- \beta s_0|
= A_\delta + \left| \frac{s_0 - \beta y_2}{\sqrt{1-\beta^2}} - t_0\right|.\end{align*}
Thus, $y\in \Omega_{A_\delta}(s_0)$. 

Now, we observe that \eqref{include} and \eqref{pourfinir} prove \eqref{far}.

\medskip

{\bf Step 3.} Estimate close to the solitons.
First, we compute $W_k^\infty(s,y)$, the Lorentz transform with parameter $\beta \mathbf{e}_2$ of $\tilde W_k(t,x)$.
From the definition of $\tilde W_k^\infty$, \eqref{vit} and   \eqref{param},
\begin{align*}
&W_k^\infty(s,y) 
 = \tilde W_k^\infty\left(\frac{s-\beta y_2}{\sqrt{1-\beta^2}}, y_1, \frac{y_2-\beta s}{\sqrt{1-\beta^2}},\overline{\overline y}\right)\\
& = \frac  {\iota_k}{(\lambda_k^\infty)^{3/2}}W\left(\frac{y_1 - \tilde \ell_k\left(\frac{s-\beta y_2}{\sqrt{1-\beta^2}}\right) -\tilde {\mathbf{y}}_{k,1}^\infty}
{\lambda_k^\infty\sqrt{1-\tilde \ell_k^2}   }, \frac{\frac{y_2-\beta s}{\sqrt{1-\beta^2}} - \tilde {\mathbf{y}}_{k,2}^\infty}{\lambda_k^\infty},
\frac {\overline{\overline y}- \overline{\overline{\tilde {\mathbf{y}}}}_k^\infty}{\lambda_k^{\infty}}\right)\\
&=  \frac {\iota_k}{(\lambda_k^\infty)^{3/2}}
W\left(\frac{\left(y_1 - \ell_k s-  \mathbf{y}_{k,1}^\infty\right) 
+\frac{\beta\ell_k}{1-\beta^2}({y_2-\beta s} -   \mathbf{y}_{k,2}^\infty)}{\lambda_k^{\infty}\sqrt{1-\frac{\ell_1^2}{1-\beta^2}}} ,
 \frac{ {y_2-\beta s} -   \mathbf{y}_{k,2}^\infty} {\sqrt{1-\beta^2} \lambda_k^\infty},
\frac {\overline{\overline y}- \overline{\overline{  {\mathbf{y}}}}_k^\infty}{\lambda_k^{\infty}}\right).
\end{align*}
By the radial symmetry of $W$, i.e. $W(x)=W(|x|)$, we have 
$$
W_k^\infty(s,y) = 
\frac{\iota_k}{(\lambda_k^\infty)^{3/2}} W_{\ell_k \mathbf{e}_1+\beta \mathbf{e}_2} \left( \frac{y-(\ell_k \mathbf{e}_1 + \beta \mathbf{e}_2)s-\mathbf{y}_k^{\infty}}{\lambda_k^\infty}\right).
$$

Therefore, the Lorentz transform with parameter $\beta \mathbf{e}_2$ of 
$\tilde v=\tilde u-[\tilde W_1^\infty+\tilde W_2^\infty]$ is $v = u - [W_1^\infty+W_2^\infty]$ and to finish the proof of Theorem \ref{th:1} in case (A), we only have to prove that, for $S_\delta$ large enough,
\begin{equation}\label{limv}
\sup_{s>S_\delta}\|(v,\partial_s v)(s)\|_{\dot H^1\times L^2} \lesssim \delta.
\end{equation}
By \eqref{far} and the decay properties of $W$, we know that for $S_\delta$ large,
\begin{equation}\label{Newfar}
\sup_{s>S_\delta}\|(v,\partial_s v)(s)\|_{(\dot H^1\times L^2)(\Gamma_{A_\delta}(s))} \lesssim \delta.
\end{equation}
We now concentrate on an estimate for $v(s)$ close to the soliton centers. 

First, we  claim that for any $\delta>0$, for any $B>1$, for $S_\delta(\delta,B)$ large enough, and any $s_0>S_\delta$,
\begin{equation}\label{onv}
\iint_{|s-s_0|+ |y-(\ell_k \mathbf{e}_1 + \beta \mathbf{e}_2)s|<B} 
\left(|\nabla v|^2 + |\partial_s v|^2 \right)dyds \lesssim \delta.
\end{equation}
Indeed, by change of variables,
\begin{align*}
& \iint_{|s-s_0|+ |y-(\ell_k \mathbf{e}_1 + \beta \mathbf{e}_2)s|<B} |\partial_s v|^2 dyds\\
& =\frac1{ 1-\beta^2}
\iint_{|s-s_0|+ |y-(\ell_k \mathbf{e}_1 + \beta \mathbf{e}_2)s|<B}
\left| \left( \tilde v_t - \beta  \tilde v_{x_2}\right)
	\left(\frac{s-\beta y_2}{\sqrt{1-\beta^2}},y_1,\frac{y_2-\beta s}{\sqrt{1-\beta^2}},\overline{\overline y}\right)\right|^2 dyds.
\end{align*}
Changing variables in the integral on the right-hand side as follows (note that the Jacobian of the change of variable is $1$)
$$
t=\frac{s-\beta y_2}{\sqrt{1-\beta^2}}, \quad x_1 = y_1,\quad x_2 = \frac{y_2-\beta s}{\sqrt{1-\beta^2}}, 
\quad \overline{\overline x}=\overline{\overline y},
$$
we obtain, for some $C=C(\delta)$,
\begin{align*}
& \iint_{|s-s_0|+ |y-(\ell_k \mathbf{e}_1 + \beta \mathbf{e}_2)s|<B} 
 |\partial_s v|^2  dyds\\
& \qquad  \lesssim
\iint_{|t-s_0 \sqrt{1-\beta^2}|+ |x-\tilde \ell_k \mathbf{e}_1 t|<CB} \left(|\tilde v_t|^2 +|\tilde v_{x_2}|^2\right) dxdt
\\
& \qquad \lesssim B \sup_{t>s_0\sqrt{1-\beta^2} - CB} \int \left(|\tilde v_t|^2 +|\tilde v_{x_2}|^2\right)(t) dx \lesssim \delta,
\end{align*}
for $S_\delta(\delta,B)$ large enough by \eqref{LIM}.
Proceeding similarly for $|\nabla v|^2$, we obtain \eqref{onv}.

It follows from \eqref{onv} and \eqref{Newfar} that for any $s_0>S_\delta$, there exists $s_1 \in [s_0,s_0+1]$, such that
\begin{equation}\label{inits1}
\|(v, \partial_s v)(s_1)\|_{\dot H^1\times L^2}^2 \lesssim \delta.
\end{equation}
Now,  we use  the equation of $v$ to obtain an energy estimate  for all large time.
Note that $v$ satisfies
\begin{equation}\label{eqvb}
v_{tt}- \Delta v + f\left(v + W_1^\infty+W_2^\infty\right) - f\left( W_1^\infty\right) - f\left(W_2^\infty\right) = 0.
\end{equation} 
Using the equation of $v$, the properties of  $W_k^\infty$ and standard small data Cauchy theory (by Strichartz estimates, see e.g. section 2 of \cite{KM}), taking $\delta>0$ small enough, and for $S_\delta$ large enough,
we obtain from \eqref{inits1},
$$
\sup_{[s_1-1,s_1+1]}\int \left(|\nabla v|^2 + |\partial_s v|^2 \right)(s,y)dyds \lesssim \delta.
$$
Thus, \eqref{limv} is proved.

This completes the proof of Theorem \ref{th:1} in case (A).

\end{document}